\theoremstyle{plain}
\newtheorem{lemma}{Lemma}
\newtheorem{proposition}{Proposition}
\theoremstyle{remark}
\newtheorem{remark}{Remark}
\newtheorem{assumption}{Assumption}
\newtheorem{definition}{Definition}
\newtheorem{example}{Example}
\DeclareMathOperator*{\argmin}{arg\,min}
\DeclareMathOperator*{\interior}{int}
\DeclareMathOperator*{\logdet}{log\,det}
\DeclareMathOperator*{\domain}{dom}
\DeclareMathOperator*{\range}{ran}
\DeclareMathOperator*{\diag}{diag}
\DeclareMathOperator*{\conv}{conv}
\DeclareMathOperator*{\diam}{diam}
\DeclareMathOperator*{\minimize}{minimize}
\newcommand{\prox}{\text{prox}}
\title{\bf Regularized Rényi divergence minimization through Bregman proximal gradient algorithms}
\author[1,a]{Thomas Guilmeau}
\author[1,b]{Emilie Chouzenoux}
\author[2]{Víctor Elvira}
\affil[1]{Université Paris-Saclay, CentraleSupélec, INRIA, CVN, France}
\affil[ ]{$^{\footnotesize\textrm{a}}$ \texttt{thomas.guilmeau@inria.fr} \orcidlink{0000-0002-8484-6550}}
\affil[ ]{$^{\footnotesize \textrm{b}}$ \texttt{emilie.chouzenoux@centralesupelec.fr} \orcidlink{0000-0003-3631-6093}}
\affil[2]{School of Mathematics, University of Edinburgh, United Kingdom}
\affil[ ]{\texttt{victor.elvira@ed.ac.uk} \orcidlink{0000-0002-8967-4866}}
\date{}
\begin{document}

\maketitle

\begin{abstract}
We study the variational inference problem of minimizing a regularized Rényi divergence over an exponential family. We propose to solve this problem with a Bregman proximal gradient algorithm. We propose a sampling-based algorithm to cover the black-box setting, corresponding to a stochastic Bregman proximal gradient algorithm with biased gradient estimator. We show that the resulting algorithms can be seen as relaxed moment-matching algorithms with an additional proximal step. Using Bregman updates instead of Euclidean ones allows us to exploit the geometry of our approximate model. We prove strong convergence guarantees for both our deterministic and stochastic algorithms using this viewpoint, including monotonic decrease of the objective, convergence to a stationary point or to the minimizer, and geometric convergence rates. These new theoretical insights lead to a versatile, robust, and competitive method, as illustrated by numerical experiments.
\end{abstract}
\medskip
\noindent
{\bf Keywords.} Variational inference, Rényi divergence, Kullback-Leibler divergence, Exponential family, Bregman proximal gradient algorithm.

\medskip
\noindent
{\bf MSC2020 Subject Classification.} 62F15, 62F30, 62B11, 90C26, 90C30.


\section{Introduction}
\label{sec:intro}

Probability distributions of interest in statistical problems are often intractable. In Bayesian statistics, for instance, the targeted posterior distribution often cannot be evaluated in a closed-form nor sampled due to intractable normalization constants. Variational inference (VI) methods aim at finding accurate and tractable approximations by minimizing a divergence to the target over a family of parametric distributions \citep{blei2017, zhang2019}. Such procedures can be summarized by the choice of the approximating density family, the choice of the divergence, and the design of the algorithm used to solve the resulting optimization problem. As an example, the standard VI algorithm uses mean-field approximating densities, and minimizes the exclusive Kullback-Leibler (KL) divergence. Assuming that the complete conditionals of the true model are in an exponential family, the optimal mean-field approximation is then found by a deterministic coordinate-ascent algorithm \citep{hoffman2013}.

The research on VI methods has been very active in the last years (see the review of \citealt{zhang2019}). Majorization techniques have been proposed to cope with large scale models not satisfying conjugacy hypotheses~\citep{MarnissiTSP,zheng:hal-02161080,HuangVBA}. Another approach in such challenging context is to run a stochastic gradient descent, which leads to the so-called black-box VI methods \citep{ranganath2014}. Black-box methods allow to handle a broad choice of divergence, such as the $\alpha$-divergences \citep{hernandez-lobato2016, dieng2017, daudel2021} and Rényi divergences \citep{li2016}, themselves generalizations of the KL divergence depending on a scalar parameter $\alpha > 0$. The latter parameter can be chosen in order to enforce a mode-seeking or a mass-covering behavior in the approximations. For instance, minimizing the exclusive KL divergence leads to under-estimation of the variance of the target \citep{minka2005, margossian2023}.

VI algorithms have also benefited from the recent advances in information geometry, a field that studies statistical models through a differential-geometric lens. Among available results from this field, it has been shown that the Fisher information matrix can play the role of a metric tensor such that the square of the induced Riemannian distance is locally equivalent to the KL divergence \citep{amari2000}. Another useful insight, when exponential families are considered, is the relation between the KL divergence, Bregman divergences, and dual geometry  \citep{nielsen2010}. These ideas can be leveraged by using the \emph{natural gradient} \citep{Amari98}, which amounts to preconditioning the standard (i.e., Euclidean) gradient by the inverse Fisher information matrix. In the VI algorithms investigated in \citep{honkela2010, hensman2012, hoffman2013, khan2018, lin2019}, the standard gradient of the evidence lower bound (ELBO) is thus adjusted to take into account the Riemannian geometry of the approximating distributions, leading to simpler updates and an improved behavior. Another related approach to exploit the non-Euclidean geometry of the model is to formulate the gradient updates within the geometry induced by a general divergence. This has been explored for VI in \citep{khan2016, khan2017} and coincides with natural gradients when the exponential family is used.

Despite those numerous advances, there are still shortcomings in the development and understanding of VI algorithms, and as such, we identify below two main limitations, that we will address in this work.

First, to the best of our knowledge, natural gradient methods, and more generally, non-Euclidean optimization methods, are restricted to the minimization of the exclusive KL divergence. This is the case, for instance, of the black-box procedures leveraging natural gradients presented in \citep{khan2018, lin2019, ji2021}, of the black-box methods using the geometry induced by a divergence \citep{khan2016, khan2017}, and of the methods presented in \citep{yao2022, lambert2022} leveraging a Wasserstein-based geometry. Let us however mention the work of \cite{saha2020} that studies the minimization of an $\alpha$-divergence over a mean-field family using the Fisher Riemannian geometry.

Second, convergence studies of VI schemes are mostly empirical for black-box VI schemes, whether Euclidean geometry \citep{titsias2015, li2016, hernandez-lobato2016, dieng2017}, natural gradients \citep{honkela2010, hensman2012, hoffman2013, khan2018, lin2019}, or non-Euclidean geometry \citep{khan2016, khan2017}, are used. Indeed, the considered optimization problems are non-convex with noisy gradient, making the algorithms hard to analyze. This is in stark contrast with MCMC methods, which can be used alternatively to VI. MCMC methods based on Langevin diffusion are guaranteed to asymptotically produce samples from the target but also benefit from non-asymptotic convergence guarantees \citep{vempala2019, durmus2019langevin} under log-concavity assumptions on the target. Recent VI works have started to close the gap under smoothness and log-concavity assumptions on the target and for specific approximating families. For instance, VI algorithms leveraging the Wasserstein geometry have been shown to converge to the minimizer of the exclusive KL divergence, in the case of a mean-field approximating family \citep{yao2022} or a Gaussian approximating family \citep{lambert2022}. Related are the convergence guarantees for standard stochastic gradient descent algorithms that have been achieved in \citep{kim2023, domke2023} for location-scale approximating families and exclusive KL divergence. 

\subsection{Contributions and outline}

In this paper, we consider the minimization of a regularized Rényi divergence between the target and an exponential family. We propose a novel black-box VI algorithm that leverages the geometry induced by the Kullback-Leibler divergence and benefits from solid convergence guarantees.

More precisely, we propose a Bregman proximal gradient algorithm. These recent (possibly stochastic) optimization algorithms \citep{bauschke2003monotone,bauschke2016, teboulle2018,mukkamala2020, hanzely2021, xiao2021} arise from the generalization of the Euclidean proximal minimization schemes \citep{CombettesProx}. This general approach allows to tailor the intrinsic geometry of optimization problems by choosing a suitable Bregman divergence~\citep{bauschke2016, teboulle2018}. 

In this paper, we show that the connection between VI algorithms and proximal optimization algorithms written in Bregman geometry yields many theoretical and practical insights. We summarize our main contributions as follows:
\begin{itemize}
    \item We propose a Bregman proximal gradient algorithm to minimize a Rényi divergence over an exponential family. The Bregman divergence that we use is induced by the KL divergence and as such, our algorithm exploits the geometry of the approximating family. We also propose a sampling-based stochastic implementation for our method allowing to cope with the black-box setting. The corresponding scheme is a stochastic Bregman proximal gradient method with biased gradient estimations. Our algorithms also allow to add a regularizer function to the divergence to enforce solutions with specific features. We show that our algorithms can be seen as proximal relaxed moment-matching algorithms generalizing existing methods.
    \item We analyze the convergence of our deterministic and stochastic algorithms using a combination of existing and novel techniques for the study of Bregman proximal gradient methods. In the deterministic setting, we establish monotonic decrease of the objective and show that limit points and fixed points of our algorithms are stationary points. We establish geometric rates of convergence provided that the target belongs to the approximating family or that the Rényi divergence is the inclusive KL divergence. In the stochastic setting, we show that gradients (possibly subgradients) of the objective converge to zero. When the Rényi divergence is the inclusive KL divergence, we also give convergence rates, which depend on the step sizes and number of samples. These results are achieved with mild assumptions on the target and on the sampling procedure and can be applied to existing moment-matching methods.
    \item We demonstrate the performance of our algorithms when the approximating family is Gaussian. In this case, we show that our algorithm is faster and more robust than algorithms based on the Euclidean geometry. We also show that using an additional regularizing term can efficiently enforce sparse solutions. We also highlight how the choice of Rényi divergence allows to create mass-covering or mode-seeking approximations and to compensate high approximation errors. 
\end{itemize}

The paper is organized as follows. In Section \ref{sec:pblmPresentation}, we recall basic facts about Rényi divergences and exponential families, before presenting the optimization problem we propose to solve. Then, in Section \ref{sec:algPresentation}, we outline our proposed algorithm, then we give an alternative, stochastic, black-box implementation for it. Theoretical analysis of our algorithms, both deterministic and stochastic, are provided in Section \ref{sec:exactConvergence}. Finally, numerical experiments with Gaussian approximating distributions are presented in Section \ref{sec:numerical}. We discuss our results and possible future research lines in Section \ref{sec:conclusion}.

The supplementary material contains four appendices. The proofs of our results are deferred to Appendices \ref{section:appendix_f_pi_alpha} and \ref{section:appendixConvergence}, while we construct a sparsity-enforcing proximal operator in Appendix \ref{section:appendixProxComputations}. Additional numerical experiments are presented in Appendix \ref{section:suppNumericalExperiments}.

\subsection{Notation}

The discrete set $\{ n_1, n_1+1, \ldots, n_2 \}$ defined for $n_1,n_2 \in \mathbb{N}$, $n_1 < n_2$ is denoted by $\llbracket n_1, n_2 \rrbracket$. Throughout this work, $\mathcal{H}$ is a real Hilbert space of finite dimension $n$ with scalar product $\langle \cdot, \cdot \rangle$ and norm $\| \cdot \|$. We denote by $B(\theta, R)$ the closed ball centered at $\theta \in \mathcal{H}$ with radius $R > 0$. The interior of a set $C$ is denoted by $\interior C$. The set of non-negative real numbers is denoted by $\mathbb{R}_+$ and the set of positive real numbers by $\mathbb{R}_{++}$. Similarly, we denote by $\mathbb{R}_-$ and $\mathbb{R}_{--}$ the sets of non-positive and negative real numbers, respectively. Consider the set of matrices of $\mathbb{R}^{d \times d}$. Then, the set of symmetric matrices is denoted by $\mathcal{S}^d$, the set of positive semidefinite matrices is denoted by $\mathcal{S}_{+}^d$, and the set of positive definite matrices is denoted by $\mathcal{S}_{++}^d$. The identity matrix is denoted by $I$, $\det(\cdot)$ denotes the determinant operator on matrices and $\| \cdot \|_F$ the Frobenius norm. We use Landau's notation, i.e., for some functions $f,g: \mathcal{H} \rightarrow \mathbb{R}$, we write $f(\upsilon) = o(g(\upsilon))$ if $f$ is such that, for any $\epsilon > 0$, there exists $\upsilon_0$ with $\| \upsilon_0\|$ small enough such that $|f(\upsilon)| \leq \epsilon |g(\upsilon)|$ for any $\upsilon \in B(0, \|\upsilon_0\|)$. Convex analysis notations are those from \citep{bauschke2011}. In particular, we denote by $\Gamma_0(\mathcal{H})$ the set of proper convex lower-semicontinuous functions from $\mathcal{H}$ to $\mathbb{R} \cup \{ + \infty \}$. The domain of a function $f : \mathcal{H} \rightarrow [-\infty, +\infty]$ is $\domain f := \{ \theta \in \mathcal{H},\, f(\theta) < +\infty \}.$ The indicator function function $\iota_C$ of a set $C \subset \mathcal{H}$ is defined for every $\theta \in \mathcal{H}$ by
\begin{equation*}
    \iota_C(\theta) = 
    \begin{cases}
    0 &\text{ if } \theta \in C,\\
    +\infty &\text{ else}.
    \end{cases}
\end{equation*}
We detail below our notations for measure theory notions. In particular, the Borel algebra of a set $\mathcal{X}$ is denoted by $\mathcal{B}(\mathcal{X})$. $\mathcal{M}(\mathcal{X})$ is the set of measures on $(\mathcal{X}, \mathcal{B}(\mathcal{X}))$, and $\mathcal{P}(\mathcal{X})$ is the set of probability measures on $(\mathcal{X}, \mathcal{B}(\mathcal{X}))$. Given $m_1, m_2 \in \mathcal{M}(\mathcal{X})$, we write $m_1 \ll m_2$ when $m_1$ is absolutely continuous with respect to $m_2$. For a given $m \in \mathcal{M}(\mathcal{X})$ and a measurable function $h : \mathcal{X} \rightarrow \mathcal{H}$, we denote by $m(h)$ the vector of $\mathcal{H}$ defined by $(m(h))_i = \int_{\mathcal{X}} h_i(x) m(dx)$ for $i \in \llbracket 1,n \rrbracket$. Finally, $\mathcal{N}(\cdot\, ; \mu, \Sigma)$ denotes the density of a Gaussian probability measure with mean $\mu \in \mathbb{R}^d$ and covariance $\Sigma \in \mathcal{S}_{++}^d$.

\section{Problem of interest}
\label{sec:pblmPresentation}

We propose to reformulate the problem of approximating a target $\pi$ by a parametric distribution $q_{\theta}$ as a variational minimization problem. In this context, the optimal parameters $\theta$ are defined to minimize a divergence to the target. Specifically, we focus here on the case when $q_\theta$ lies in an exponential family, and we propose to optimize its parameters $\theta$ through the minimization of a Rényi divergence between $\pi$ and $q_\theta$ with a regularization term. In this section, we first recall important definitions regarding Rényi divergences (including the Kullback-Leibler divergence as a special case) and exponential families. We then introduce our variational inference (VI) problem.

Let $(\mathcal{X}, \mathcal{B}(\mathcal{X}))$ be a measurable space. Let us consider a measure $\nu \in \mathcal{M}(\mathcal{X})$, with the sets $\mathcal{M}(\mathcal{X},\nu) := \{ m \in \mathcal{M}(\mathcal{X}),\, m \ll \nu \}$ and $\mathcal{P}(\mathcal{X},\nu) := \{ p \in \mathcal{P}(\mathcal{X}),\, p \ll \nu \}$.  We are interested in approximating the target probability distribution $\pi \in \mathcal{P}(\mathcal{X},\nu)$.

\subsection{Rényi and Kullback-Leibler divergences}

Rényi divergences and Kullback-Leibler (KL) divergence are widely used in statistics as discrepancy measures between probability distributions. To define them, let us consider two probability densities $ p_1, p_2 \in \mathcal{P}(\mathcal{X}, \nu)$. We can then define the Rényi and KL divergences between $p_1$ and $p_2$ as follows.

\begin{definition}
    \label{def:renyiDiv}
    The \emph{Rényi divergence} with parameter $\alpha > 0$, $\alpha \neq 1$, between $p_1$ and $p_2$ is defined by
    \begin{equation*}
        RD_{\alpha}(p_1, p_2) = \frac{1}{\alpha - 1} \log \left( \int p_1(x)^{\alpha} p_2(x)^{1 - \alpha} \nu(dx) \right).
    \end{equation*}
    When the above integral is not well-defined, then $RD_{\alpha}(p_1, p_2) = +\infty$.
\end{definition}

\begin{definition}
    \label{def:klDiv}
    The KL divergence between $p_1$ and $p_2$ is defined by
    \begin{equation*}
        KL(p_1, p_2) = \int \log \left( \frac{p_1(x)}{p_2(x)} \right) p_1(x) \nu(dx).
    \end{equation*}
    When the above integral is not well-defined, then $KL(p_1, p_2) = +\infty$.
\end{definition}
The KL divergence is a limiting case of Rényi divergence as shown by \cite{vanErven2014}, since 
\begin{equation*}
    \lim_{\alpha \rightarrow 1,\, \alpha \leq 1} RD_{\alpha}(p_1, p_2) = KL(p_1, p_2).
\end{equation*}

Let us recall the important following property, that explains the term \emph{divergence}:
\begin{proposition}[\cite{vanErven2014}]
    \label{prop:RényiKL_div}
    For any $\alpha > 0$, 
    \begin{align*}
        RD_{\alpha}(p_1,p_2) \geq 0,\text{ and } RD_{\alpha}(p_1,p_2) = 0 \text{ if and only if } p_1 = p_2,
    \end{align*}
    where $RD_1(p_1, p_2)$ is being taken equal to $KL(p_1, p_2)$.
\end{proposition}

\subsection{Exponential families}

In this work, we propose to approximate the target $\pi \in \mathcal{P}(\mathcal{X}, \nu)$ by a parametric distribution taken from an exponential family \citep{brown1986, barndorff-nielsen2014}.  

\begin{definition} 
    Let $\Gamma: \mathcal{X} \rightarrow \mathcal{H}$ be a Borel-measurable function. The exponential family with base measure $\nu$ and sufficient statistics $\Gamma$ is the family $\mathcal{Q} = \{ q_{\theta} \in \mathcal{P}(\mathcal{X}, \nu),\, \theta \in \Theta \}$ such that
    \begin{equation}
        \label{eq:qtheta}
        q_{\theta}(x) = \exp \left( \langle \theta, \Gamma(x) \rangle - A(\theta) \right),\, \forall x \in \mathcal{X},
    \end{equation}
    with $A$ being the log-partition function, such that $\Theta = \domain A \subset \mathcal{H}$, and which reads:
    \begin{equation}
        \label{eq:logPartitionExponential}
        A(\theta) = \log \left( \int \exp \left( \langle \theta, \Gamma(x) \rangle \right) \nu(dx) \right), \, \forall \theta \in \Theta.
    \end{equation}
\end{definition}
In the following, for the sake of conciseness, we will say that some family $\mathcal{Q}$ is an exponential family, without stating explicitly the base measure and the sufficient statistics $\mathcal{Q}$ is associated to.

\begin{remark}
    We work here with parameters in the finite-dimensional Hilbert space $\mathcal{H}$, which is slightly more general than considering parameters in $\mathbb{R}^n$. This allows to consider vectors, matrices, or Cartesian products in a unified way. In particular, when symmetric matrices are considered, we work directly with $\mathcal{S}^d$ rather than with its vectorized counterpart $\mathbb{R}^{d(d+1)/2}$.
\end{remark}

The goal of our approximation method is thus to find $\theta \in \Theta$ such that $q_{\theta}$ is an optimal approximation of $\pi$, in a sense that remains to be precised. Before going further, let us provide an important example of an exponential family.

\begin{example}
    \label{example:Gaussian_is_exp}
    Let $d \geq 1$. Consider the family of Gaussian distributions with mean $\mu \in \mathbb{R}^d$ and covariance $\Sigma \in \mathcal{S}^d_{++}$. This is an exponential family \citep{barndorff-nielsen2014}, with sufficient statistics $\Gamma:x \longmapsto \left(x, x x^\top \right)^\top$ and Lebesgue base measure  that we denote by $\mathcal{G}$ in the following. Its corresponding parameters are $\theta = (\theta_1, \theta_2)^{\top}$ with $\theta_1 = \Sigma^{-1} \mu$, and $\theta_2 = -\frac{1}{2} \Sigma^{-1}$, while $A(\theta) = \frac{d}{2} \log (2 \pi) -\frac{1}{4} \theta_1^{\top} \theta_2^{-1} \theta_1 - \frac{1}{2} \logdet (-2 \theta_2)$. The domain of $A$ is $\Theta = \mathbb{R}^d \times \left( - \mathcal{S}^d_{++} \right)$, which is included in $\mathcal{H} = \mathbb{R}^d \times \mathcal{S}^d$. The scalar product of $\mathcal{H}$ is taken as the sum of the scalar product of $\mathbb{R}^d$ and the one of $\mathcal{S}^d$.
\end{example}

Exponential families recover many other continuous distributions, such as the inverse Gaussian and Wishart distributions, among others. Discrete distributions can also be put under the form \eqref{eq:qtheta} when $\nu$ is chosen as a discrete measure. Exponential families benefit from a rich geometric structure \citep{amari2000, nielsen2010} and have been used as approximating families in many contexts such as VI algorithms \citep{hensman2012, hoffman2013, blei2017, lin2019}, or adaptive importance sampling (AIS) procedures \citep{akyildiz2021}. 

We now give some background about the geometric structure of the parameters of an exponential family. To this end, we first introduce the concepts of Legendre functions and Bregman divergences.

\begin{definition}
    \label{def:legendreFunction}
    A \textit{Legendre function} is a function $B \in \Gamma_0(\mathcal{H})$ that is strictly convex on the interior of its domain $\interior \domain B$, and essentially smooth. $B$ is essentially smooth if it is differentiable on $\interior \domain B$ and such that $|| \nabla B (\theta_k) || \xrightarrow[k \rightarrow +\infty]{} + \infty$ for every sequence $\{ \theta_k \}_{k \in \mathbb{N}}$ converging to a boundary point of $\domain B$ with $\theta_k \in \interior \domain B$ for every $ k \in \mathbb{N}$.
    
    Given a Legendre function $B$, we define the \textit{Bregman divergence} $d_B$ as
    \begin{equation*}
        d_B(\theta, \theta') := B(\theta) - B(\theta') - \langle \nabla B(\theta'), \theta - \theta' \rangle,\,\forall (\theta, \theta') \in (\domain B) \times (\interior \domain B).
    \end{equation*}
\end{definition}

The Bregman divergence $d_B(\theta, \theta')$ measures the gap between the value of the function $B$ and its linear approximation at $\theta'$, when both are evaluated at $\theta$. Bregman divergences generalize the Euclidean norm, since it is recovered for $B(\theta) = \frac{1}{2} \| \theta \|^2$ \citep{bauschke2016}. We now recall the definition of conjugate functions \citep{bauschke2011}, which allows to state some useful properties of Legendre functions. 

\begin{definition}
    The \textit{conjugate} of a function $f : \mathcal{H} \rightarrow [-\infty, +\infty]$ is the function $f^* : \mathcal{H} \rightarrow [-\infty, +\infty]$ such that
    \begin{equation*}
        f^*(\theta) = \sup_{\theta' \in \mathcal{H}} \langle \theta', \theta \rangle - f(\theta').
    \end{equation*}
\end{definition}

\begin{proposition}[Section 2.2 in \citealt{teboulle2018}]
    \label{prop:LegendreFunctionsProperties}
    Let $B$ be a Legendre function. Then we have that
    \begin{enumerate}
        \item[$(i)$] $\nabla B$ is a bijection from $\interior \domain B$ to $\interior \domain B^*$, and $(\nabla B)^{-1} = \nabla B^*$,
        \item[$(ii)$] $\domain \partial B = \interior \domain B$ and $\partial B (\theta) = \{ \nabla B(\theta) \}$, $\forall \theta \in \interior \domain B$,
        \item[$(iii)$] $B$ is a Legendre function if and only if $B^*$ is a Legendre function,
        \item[$(iv)$] for every $\theta \in \domain B$, $\theta' \in \interior \domain B$, $d_B(\theta, \theta') \geq 0$ with equality if and only if $\theta = \theta'$.
    \end{enumerate}  
\end{proposition}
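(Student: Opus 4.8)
The plan is to treat these as classical facts of convex analysis (see \cite{bauschke2011, teboulle2018}) and to establish them in the order (ii), then the final equivalence, then (i), since the characterization of $\partial B$ and the Legendreness of $B^*$ both feed into the bijectivity claim.

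First I would prove (ii). For any $\theta \in \interior \domain B$, a proper convex function is subdifferentiable on the interior of its domain, so $\partial B(\theta) \neq \emptyset$; since $B$ is differentiable there by essential smoothness, the standard identity between gradient and subdifferential at a point of differentiability yields $\partial B(\theta) = \{ \nabla B(\theta) \}$. It remains to show that no boundary point of $\domain B$ carries a subgradient, i.e. $\domain \partial B \subseteq \interior \domain B$. I would argue by contradiction: a subgradient at a boundary point $\bar{\theta}$ would, by monotonicity of the subdifferential applied along a segment $\theta_k \to \bar{\theta}$ with $\theta_k \in \interior \domain B$, force $\{ \nabla B(\theta_k) \}$ to stay bounded, contradicting the blow-up condition $\| \nabla B(\theta_k) \| \to +\infty$ built into essential smoothness. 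Together with the reverse inclusion $\interior \domain B \subseteq \domain \partial B$, this gives $\domain \partial B = \interior \domain B$.

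Next I would establish the final equivalence that $B$ is Legendre if and only if $B^*$ is Legendre. The heart of the matter is the dual correspondence between the two defining properties: $B$ is essentially smooth if and only if $B^*$ is essentially strictly convex, and $B$ is strictly convex on $\interior \domain B$ if and only if $B^*$ is essentially smooth. Since being Legendre means enjoying both properties at once, the equivalence is then immediate by symmetry of conjugation, recalling that $B^{**} = B$ for $B \in \Gamma_0(\mathcal{H})$. This step is the main obstacle: translating essential smoothness (a boundary blow-up condition on $\nabla B$) into strict convexity of $B^*$ requires a careful analysis of how the subdifferential inversion $\theta^* \in \partial B(\theta) \iff \theta \in \partial B^*(\theta^*)$ transforms the boundary behaviour of $B$ into the interior structure of $B^*$.

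Finally, (i) follows by combining the previous two steps. By the Fenchel--Young equality, $\theta^* \in \partial B(\theta)$ holds if and only if $\theta \in \partial B^*(\theta^*)$. Applying (ii) to $B$ and, since $B^*$ is now known to be Legendre, also to $B^*$, this reads $\theta^* = \nabla B(\theta)$ with $\theta \in \interior \domain B$ if and only if $\theta = \nabla B^*(\theta^*)$ with $\theta^* \in \interior \domain B^*$. This single equivalence delivers at once that $\nabla B$ maps $\interior \domain B$ into $\interior \domain B^*$, that it is surjective onto $\interior \domain B^*$, and that $\nabla B^*$ is its two-sided inverse; injectivity also follows directly from strict convexity, since any two interior points sharing the same gradient $\theta^*$ would both minimize the strictly convex map $\theta \mapsto B(\theta) - \langle \theta^*, \theta \rangle$ and hence coincide.
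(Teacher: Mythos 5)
The paper does not prove this proposition: it is imported verbatim from the literature (the citation to Section~2.2 of Teboulle's survey, which in turn rests on Rockafellar's Theorems~26.1, 26.3 and 26.5), so there is no in-paper argument to compare against. Your outline follows exactly that classical route --- characterize $\partial B$ first, pass to the conjugate via the duality between essential smoothness and essential strict convexity, then read off the bijection from the Fenchel--Young equivalence $\theta^* \in \partial B(\theta) \iff \theta \in \partial B^*(\theta^*)$ --- and parts of it are fine: the identification $\partial B(\theta) = \{\nabla B(\theta)\}$ on $\interior \domain B$, and the whole of your step for (i), are correct and standard.

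There are, however, two genuine gaps. First, in (ii), your argument that a subgradient $g \in \partial B(\bar\theta)$ at a boundary point $\bar\theta$ contradicts the blow-up condition does not work as stated: monotonicity of $\partial B$ along a segment $\theta_k = \bar\theta + t_k(\theta_0 - \bar\theta)$ only yields bounds on the scalar quantity $\langle \nabla B(\theta_k), \theta_0 - \bar\theta \rangle$ (one directional component of the gradient), and a sequence of gradients can have a bounded component in a fixed direction while $\| \nabla B(\theta_k) \| \to +\infty$ --- indeed the natural limit of $\nabla B(\theta_k)/\|\nabla B(\theta_k)\|$ is an outer normal to $\domain B$ at $\bar\theta$, which is orthogonal-ish to the segment direction, so no contradiction is reached. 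Excluding boundary subgradients under essential smoothness is precisely the nontrivial content of Rockafellar's Theorem~26.1 and requires a finer argument than segment monotonicity. Second, you explicitly defer the equivalence ``$B$ essentially smooth $\iff$ $B^*$ essentially strictly convex'' (Rockafellar's Theorem~26.3), which you correctly identify as the main obstacle but do not prove; since everything in (i) and the final biconditional hinges on it, the proposal as written is an accurate map of the standard proof rather than a proof. If the intent is to cite these facts, as the paper does, that is legitimate; if the intent is to prove them, both steps need to be supplied.
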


Proposition \ref{prop:LegendreFunctionsProperties} $(iv)$ shows that Legendre functions can be used to create Bregman divergence with a distance-like property. Note, however, that Bregman divergences are not symmetric nor do they satisfy the triangular inequality in general. Proposition \ref{prop:LegendreFunctionsProperties} also shows that the gradients of Legendre functions are bijections between $\interior \domain B$ and $\interior \domain B^*$, with inverse $\nabla B^*$. We will now recall results showing that the log-partition function defined in \eqref{eq:logPartitionExponential} is a Legendre function under minimal assumptions. This makes the log-partition function a natural choice to generate a Bregman divergence and allows to define a bijection between the parameters and the moments of distributions from considered family.

\begin{proposition}[\cite{brown1986, barndorff-nielsen2014}]
    \label{prop:convexityTheta}
    Under the hypothesis that $\interior \Theta \neq \emptyset$, the log-partition $A$, defined in Eq. \eqref{eq:logPartitionExponential}, is proper, lower semicontinuous and strictly convex. In addition, all the partial derivatives of $A$ exist on $\interior \Theta$. In particular, its gradient reads
    \begin{equation}
    \label{eq:gradA}
        \nabla A(\theta) = q_{\theta}(\Gamma),\, \forall \theta \in \interior \Theta.
    \end{equation}
    If $\mathcal{Q}$ is minimal and steep (see \cite[Chapter 8]{barndorff-nielsen2014} for more details on these notions), then the log-partition function is a Legendre function.
\end{proposition}
Minimality ensures that each distribution in $\mathcal{Q}$ can be parametrized only by a unique parameter $\theta$. Steepness is satisfied by most exponential families. It is in particular implied by having $\Theta = \domain A$ being open \cite[Theorem 8.2]{barndorff-nielsen2014}. More precisely, the results of Proposition \ref{prop:convexityTheta} come from \cite[Theorem 1.13]{brown1986} for the convexity results, \cite[Theorem 8.1]{barndorff-nielsen2014} for the differentiability result, and \cite[Eq.~(20)]{barndorff-nielsen2014} for the steepness part. 

The Legendre property on $A$ allows to benefit from the results of Proposition \ref{prop:LegendreFunctionsProperties}, implying in particular that there is a bijection between the parameter $\theta$ and the moments $q_{\theta}(\Gamma)$. This leads to an alternative parametrization of $q_{\theta}$ in terms of its moments, which are often called the dual parameters. The Bregman divergence induced by the Legendre function $A$ admits a statistical interpretation that has been well-studied in the information geometry community \citep{nielsen2010}. Indeed, the KL divergence between two distributions from $\mathcal{Q}$ is equivalent to the Bregman divergence $d_A$ between their parameters, as we recall in the next proposition. 

\begin{proposition}[\citealt{nielsen2010}]
    \label{prop:KLisBregman}
    Consider $\theta, \theta' \in \interior \Theta$ and $A$ the log-partition function defined in \eqref{eq:logPartitionExponential}. Then,
    \begin{equation*}
        KL(q_{\theta}, q_{\theta'}) = d_A(\theta', \theta).
    \end{equation*}
\end{proposition}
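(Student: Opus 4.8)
The plan is to compute the KL divergence directly from the definitions and match it term-by-term with the Bregman divergence $d_A$, the only nontrivial ingredient being the identity $\nabla A(\theta) = q_\theta(\Gamma)$ supplied by Proposition \ref{prop:convexityTheta}. First I would expand the log-ratio appearing inside $KL(q_\theta, q_{\theta'})$ using the exponential-family form \eqref{eq:qtheta}: for every $x \in \mathcal{X}$,
\begin{equation*}
    \log \left( \frac{q_{\theta}(x)}{q_{\theta'}(x)} \right) = \langle \theta - \theta', \Gamma(x) \rangle - A(\theta) + A(\theta').
\end{equation*}
Integrating this against $q_\theta$ and using that $q_\theta$ is a probability density (so the constant terms $-A(\theta) + A(\theta')$ come out of the integral unchanged) yields
\begin{equation*}
    KL(q_{\theta}, q_{\theta'}) = \langle \theta - \theta', q_{\theta}(\Gamma) \rangle - A(\theta) + A(\theta'),
\end{equation*}
where $q_{\theta}(\Gamma) = \int \Gamma(x) q_{\theta}(x)\, \nu(dx)$ in the notation of Section \ref{sec:intro}.

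Next I would invoke Eq. \eqref{eq:gradA} of Proposition \ref{prop:convexityTheta}, valid since $\theta \in \interior \Theta$, to replace $q_{\theta}(\Gamma)$ by $\nabla A(\theta)$, giving $KL(q_{\theta}, q_{\theta'}) = A(\theta') - A(\theta) + \langle \nabla A(\theta), \theta - \theta' \rangle$. Comparing with the definition of the Bregman divergence in Definition \ref{def:legendreFunction}, namely $d_A(\theta', \theta) = A(\theta') - A(\theta) - \langle \nabla A(\theta), \theta' - \theta \rangle$, and noting $\langle \nabla A(\theta), \theta - \theta' \rangle = -\langle \nabla A(\theta), \theta' - \theta \rangle$, the two expressions coincide, which establishes $KL(q_{\theta}, q_{\theta'}) = d_A(\theta', \theta)$.

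I do not anticipate a serious obstacle here: the computation is essentially algebraic once the gradient identity is in hand. The only point requiring care is the \emph{swap of arguments} between the two sides: the first argument $\theta$ of the KL divergence (the distribution we integrate against, whose mean statistic produces the gradient term) becomes the \emph{second} argument of $d_A$, at which $\nabla A$ is evaluated. This is exactly what makes $KL(q_\theta, q_{\theta'})$ match $d_A(\theta', \theta)$ rather than $d_A(\theta, \theta')$, and it reflects the duality between the natural and mean parametrizations of the family. A secondary technical detail is ensuring well-definedness: since $\theta, \theta' \in \interior \Theta$, Proposition \ref{prop:convexityTheta} guarantees that $q_\theta(\Gamma)$ is finite and that the integral defining $KL$ is well-posed, so the manipulations above are legitimate.
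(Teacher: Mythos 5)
Your computation is correct: expanding the log-ratio via the exponential-family form, integrating against $q_\theta$, and substituting $\nabla A(\theta) = q_\theta(\Gamma)$ from Proposition \ref{prop:convexityTheta} gives exactly $d_A(\theta',\theta)$, and you correctly flag the argument swap as the only subtle point. The paper itself gives no proof of this proposition --- it is stated as a known result imported from \cite{nielsen2010} --- and your derivation is precisely the standard argument underlying that reference, so there is nothing to object to.
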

This results gives a natural choice of Bregman divergence to use in the context of a Bregman proximal gradient algorithm.

\subsection{Considered VI problem}

In this work, we seek to approximate $\pi$ by a parametric distribution $q_{\theta}$ from an exponential family $\mathcal{Q}$ with base measure $\nu$, such that the domain $\Theta \subset \mathcal{H}$ is non-empty. To measure the quality of our approximations, we define the following family of functions $f_{\pi}^{(\alpha)}$ for $\alpha > 0$:
\begin{equation}
    \label{eq:definition_f_pi_alpha}
    f_{\pi}^{(\alpha)}(\theta) := 
    \begin{cases}
    RD_{\alpha}(\pi, q_{\theta}),& \text{ if } \alpha \neq 1,\\
    KL(\pi, q_{\theta}),& \text{ if } \alpha = 1,
    \end{cases}
    \, \forall \theta \in \Theta.
\end{equation}

Furthermore, we introduce a \emph{regularizing} term $r$, which promotes desirable properties on the sought parameters $\theta$. We can now define our objective function for some $\alpha > 0$:
\begin{equation}
    \label{def:objRegul}
    F_{\pi}^{(\alpha)}(\theta) := f_{\pi}^{(\alpha)}(\theta) + r(\theta),\, \forall \theta \in \Theta.
\end{equation}
We propose to resolve our approximation problem by minimizing \eqref{def:objRegul} over an exponential family $\mathcal{Q}$, i.e., by considering the following optimization problem:
\begin{equation}
    \label{eq:pblmPropAdapt}
    \tag{$P_{\pi}^{(\alpha)}$}
    \minimize_{\theta \in \Theta}\, F_{\pi}^{(\alpha)}(\theta).
\end{equation}
Problem \eqref{eq:pblmPropAdapt} consists in minimizing $F_{\pi}^{(\alpha)}$, which is the sum of the Rényi divergence $RD_{\alpha}(\pi, \cdot)$ and a regularizing function $r$. This allows to capture or generalize many settings.

Minimizing the KL divergence leads to a particular behavior that may be undesirable in practice. For instance, minimizing $KL(\pi, \cdot)$ induces a mass-covering behavior while minimizing $KL( \cdot, \pi)$ induces a mode-fitting behavior \citep{minka2005, blei2017}. In contrast, working with a Rényi divergence as a discrepancy measure allows to generalize the KL divergence, recovered when $\alpha=1$ while allowing to choose the right value of $\alpha$ \citep{li2016}, hence fine-tuning the algorithm's behavior for the application at hand. Moreover, the Rényi divergence with parameter $\alpha$ can be monotonically transformed \citep{vanErven2014} into the corresponding $\alpha$-divergence \citep{hernandez-lobato2016, daudel2021}, including in particular the $\chi^2$ divergence \citep{dieng2017, akyildiz2021}.

Adding a regularization term gives even more possibilities. When $r$ is null or an indicator function, then Problem \eqref{eq:pblmPropAdapt} relates to the computation of the so-called reverse information projection \citep{dykstra1985, csiszar2004} when $\alpha = 1$, which has later been generalized by \cite{kumar2016} for $\alpha \neq 1$. A similar setting is used in sparse precision matrix estimation, relying on the KL divergence and a sparsity-inducing regularizer \citep{banerjee2008}. The problem of computing Bayesian coresets has also been formulated as a KL minimization problem over a set of sparse parameters by \cite{campbell2019}. Let us also mention that \cite{shao2011} added a graph regularization term to a KL divergence, to enforce special geometric structure. Finally, the minimization of problems composed of a divergence and an additional term is at the core of the generalized view on variational inference proposed by \cite{knoblacuh2022}.

\section{A proximal relaxed moment-matching algorithm}
\label{sec:algPresentation}

Let us now propose our algorithm to solve Problem \eqref{eq:pblmPropAdapt}. This algorithm is a Bregman proximal gradient algorithm, for which we first give an exact version and show that it can be interpreted as a relaxed moment-matching algorithm. We then propose a sampling-based implementation of this algorithm, before discussing its links with existing algorithms in the field of computational statistics.

\subsection{An exact Bregman proximal gradient algorithm}
\label{ssec:idealizedAlg}

Problem \eqref{eq:pblmPropAdapt} is a composite problem since the objective is the sum of two terms. We propose to solve this problem using a Bregman proximal gradient algorithm, where the gradient step is used for the Rényi divergence term and the proximal step is used upon the regularizer. We first define these two operators, following \citep{bauschke2016}, before giving the full algorithm.

\begin{definition}
    \label{def:BPGoperators}
    Consider a positive step-size $\tau > 0$.
    \begin{itemize}
        \item[$(i)$] The \textit{Bregman proximal operator} of $\theta \longmapsto \tau r(\theta)$ is defined for every $\theta \in \interior \domain A$ by
    \begin{equation*}
        \prox_{\tau r}^A(\theta) := \argmin_{\theta' \in \domain A} \left( r(\theta') + \frac{1}{\tau} d_A(\theta', \theta) \right).
    \end{equation*}
        \item[$(ii)$]
        The \textit{Bregman gradient descent operator} of $\theta \longmapsto \tau f_{\pi}^{(\alpha)}(\theta)$ is defined for every $\theta \in \interior \domain A$ by
        \begin{equation*}
            \gamma_{\tau f_{\pi}^{(\alpha)}}^A(\theta) := \argmin_{\theta' \in \domain A} \left( f_{\pi}^{(\alpha)}(\theta) + \langle \nabla f_{\pi}^{(\alpha)}(\theta), \theta'-\theta\rangle + \frac{1}{\tau} d_A(\theta', \theta) \right).
        \end{equation*}
        \item[$(iii)$] The \textit{Bregman proximal gradient operator} of $\theta \longmapsto \tau F_{\pi}^{(\alpha)}(\theta)$ is defined for every $\theta \in \interior \domain A$ by
        \begin{equation*}
            T_{\tau F_{\pi}^{(\alpha)}}^A(\theta) := \argmin_{ \theta' \in \domain A } \left( f_{\pi}^{(\alpha)}(\theta) + r(\theta') + \langle \nabla f_{\pi}^{(\alpha)}(\theta), \theta' - \theta \rangle + \frac{1}{\tau} d_A(\theta', \theta) \right).
        \end{equation*}
    \end{itemize}
\end{definition}
These operators leverage the geometry of the KL divergence over the exponential family. Indeed, we use the Bregman divergence induced by the log-partition function which is the KL divergence between two densities from $\mathcal{Q}$ as shown in Proposition \ref{prop:KLisBregman}. We now leverage these operators to provide our algorithm solving Problem \eqref{eq:pblmPropAdapt}.

\begin{algorithm}[htb]
    \SetAlgoLined
     Choose the step-sizes $\{ \tau_k \}_{k \in \mathbb{N}}$, such that $\tau_k \in (0,1]$ for any $k \in \mathbb{N}$.\\
     Set the Rényi parameter $\alpha > 0$.\\
     Initialize the algorithm with $\theta_0 \in \interior \Theta$.\\
     \For{$k = 0,...$}{
     Compute $\theta_{k+\frac{1}{2}}$ such that
     \begin{equation}
        \label{eq:idealizedGradientStep}
         \theta_{k+\frac{1}{2}} = \gamma_{\tau_{k+1} f_{\pi}^{(\alpha)}}^A(\theta_k)
     \end{equation}\\
     Update $\theta_{k+1}$ following
     \begin{equation}
        \label{eq:ProxStep}
         \theta_{k+1} = \prox_{\tau_{k+1} r}^A(\theta_{k+\frac{1}{2}}).
     \end{equation}
     }
     \caption{Proposed Bregman proximal gradient algorithm}
     \label{alg:idealizedBPG}
\end{algorithm}

The Bregman gradient step in Algorithm \ref{alg:idealizedBPG} involves the computation of $\nabla f_{\pi}^{(\alpha)}$, $\nabla A$, and $\nabla A^*$. We now show that this step can be written explicitly in terms of moments of specific distributions, so as to outline a relaxed moment-matching interpretation of Algorithm \ref{alg:idealizedBPG}. One of these distributions combines the target and the proposal distributions as follows.

\begin{definition}
    Consider $\theta \in \Theta$ and $\alpha > 0$. We introduce, whenever it is well-defined, the \textit{geometric average} with parameter $\alpha$ between $\pi$ and $q_{\theta}$, denoted by $\pi_{\theta}^{(\alpha)}$, which is the probability distribution of $\mathcal{P}(\mathcal{X}, \nu)$ defined by 
    \begin{equation}
        \pi_{\theta}^{(\alpha)}(x) =  \frac{1}{\int \pi(y)^{\alpha}q_{\theta}(y)^{1 - \alpha} \nu(dy)} \left( \pi(x)^{\alpha} q_{\theta}(x)^{1 - \alpha} \right),\, \forall x \in \mathcal{X}.
        \label{geometricav}
    \end{equation}
\end{definition}
Probability densities akin to $\pi_{\theta}^{(\alpha)}$ have been used for instance in annealed importance sampling \citep{neal2001}, in sequential Monte-Carlo schemes \citep{del2006sequential}, or in adaptive importance sampling \citep{bugallo2016new}. The integral in \eqref{geometricav} is well-defined if $\alpha \leq 1$ and the supports of $\pi$ and $q_{\theta}$ have non-empty intersection. Since $\pi$ and every $q_{\theta} \in \mathcal{Q}$ are absolutely continuous with respect to $\nu$, and $q_{\theta}(x) > 0$ for every $x \in \mathcal{X}$, the latter condition is always satisfied within the setting of our study. We now show that these densities are linked with the gradients of the Rényi divergence.

\begin{proposition}
    \label{prop:gradient_f_pi_alpha}
    Let $\alpha > 0$. The map $f_{\pi}^{(\alpha)}$ is of class $\mathcal{C}^2$ on $\interior \Theta \cap \domain f_{\pi}^{(\alpha)}$. In particular, for any $\theta \in \interior \Theta \cap \domain f_{\pi}^{(\alpha)}$,
    \begin{equation*}
        \nabla f_{\pi}^{(\alpha)}(\theta) = 
        \begin{cases}
        q_{\theta}(\Gamma) - \pi(\Gamma) &\text{ if } \alpha = 1,\\
        q_{\theta}(\Gamma) - \pi_{\theta}^{(\alpha)}(\Gamma) &\text{ if } \alpha \neq 1.
        \end{cases}
    \end{equation*}
\end{proposition}

We can now show that the Bregman gradient update in Algorithm \ref{alg:idealizedBPG} can be seen as a relaxed moment-matching update. Indeed, the moment of the next proposal is a convex combination of two moments: the moment of the geometric average between the target and the current proposal, and the moment of the current proposal. This result is stated under a well-posedness assumption that we investigate in Section \ref{sec:exactConvergence}. We explicit this update in the case of Gaussian proposals.

\begin{proposition}
    \label{prop:momentMatchingUpdate}
    Assume that $\mathcal{Q}$ is minimal and steep, and consider the sequence $\{ \theta_k \}_{k \in \mathbb{N}}$ generated by Algorithm \ref{alg:idealizedBPG}. If $\theta_k \in \interior \Theta \cap \domain f_{\pi}^{(\alpha)}$ and if $\theta_{k+\frac{1}{2}} = \gamma_{\tau_{k+1} f_{\pi}^{(\alpha)}}^A(\theta_k)$ is well-defined, belonging to $\interior \Theta$, then $\theta_{k+\frac{1}{2}}$ satisfies
    \begin{equation}
        \label{eq:relaxedMomentMatchingInterpretation}
        q_{\theta_{k+\frac{1}{2}}}(\Gamma) = \tau_{k+1} \pi_{\theta_k}^{(\alpha)}(\Gamma) + (1-\tau_{k+1}) q_{\theta_k}(\Gamma).
    \end{equation}
\end{proposition}

\begin{example}
    In the case when $\mathcal{Q} = \mathcal{G}$, the first and second order moments $(q_{\theta}(x), q_{\theta}(x x^{\top}))^{\top}$ are the sufficient statistics of the distribution $q_{\theta}$. The update \eqref{eq:idealizedGradientStep} reads in this case
    \begin{equation}
        \label{eq:momentMatchingGaussian1}
        \begin{cases}
            q_{\theta_{k+\frac{1}{2}}}(x) &= \tau_{k+1} \pi_{\theta_k}^{(\alpha)}(x) + (1- \tau_{k+1}) q_{\theta_k}(x),\\
            q_{\theta_{k+\frac{1}{2}}}(xx^{\top}) &= \tau_{k+1} \pi_{\theta_k}^{(\alpha)}(xx^{\top}) + (1- \tau_{k+1}) q_{\theta_k}(xx^{\top}).
        \end{cases}
    \end{equation}
    This shows that \eqref{eq:idealizedGradientStep} matches the first and second order moments of the new distribution $q_{\theta_{k+\frac{1}{2}}}$ with a convex combination between the moments of $\pi_{\theta_k}^{(\alpha)}$ and those of the previous distribution $q_{\theta_k}$. We recall that, for $q_{\theta} \in \mathcal{G}$, $q_{\theta}(x) = \mu$ and $q_{\theta}(x x^{\top}) = \Sigma + \mu \mu^{\top}$. Thus, we can further write that \eqref{eq:momentMatchingGaussian1} is equivalent to
    \begin{equation*}
        \begin{cases}
            \mu_{k+\frac{1}{2}} &= \tau_{k+1} \pi_{\theta_k}^{(\alpha)}(x) + (1- \tau_{k+1}) \mu_k,\\
            \Sigma_{k+\frac{1}{2}} &= \tau_{k+1} \pi_{\theta_k}^{(\alpha)}(xx^{\top}) + (1- \tau_{k+1}) \left( \Sigma_k + \mu_k \mu_k^{\top} \right) - \mu_{k+\frac{1}{2}} \mu_{k+\frac{1}{2}}^{\top}.
        \end{cases}
    \end{equation*}
\end{example}

The proximal operator $\prox_{\tau r}^A$ needs to be computed case-by-case depending on the choice of regularizer $r$. We give hereafter an example that illustrates the applicability of this second step of Algorithm \ref{alg:idealizedBPG}. Our example links Eq.~\eqref{eq:ProxStep} with reverse information projections \citep{dykstra1985, csiszar2004}. Explicit expression for this step is provided in Appendix \ref{section:appendixProxComputations} for a sparsity-inducing regularizer.

\begin{example} 
    \label{example:reverseInformationProjection}
     The proximal step \eqref{eq:ProxStep} encompasses the notion of projection if the function $r$ is the indicator $\iota_C$ of a non-empty closed convex set $C \subset \mathcal{H}$ \cite[Example 12.25]{bauschke2011}. We obtain in this case
     \begin{equation*}
         \theta_{k+1} = \argmin_{\theta' \in \Theta \cap C} KL(q_{\theta_{k+\frac{1}{2}}}, q_{\theta'}).
     \end{equation*}
     In this case, \eqref{eq:ProxStep} is the reversed information projection of $q_{\theta_{k+\frac{1}{2}}}$ on the set $\{ q_{\theta} \in \mathcal{Q},\, \theta \in C \cap \Theta \}$, as described in \cite[Section 3]{csiszar2004} for instance.    
\end{example}

\subsection{Sampling-based implementation}
\label{ssec:samplingAlg}

We have shown in Proposition \ref{prop:momentMatchingUpdate} that implementing Algorithm \ref{alg:idealizedBPG} requires the moments of the current proposal as well as the moments of its geometric average with the target distribution. While the moments of the proposals are known, the quantity $\pi_{\theta}^{(\alpha)}(\Gamma)$ is unavailable. We therefore propose, in Algorithm \ref{alg:MCrelaxedMomentMatching}, a sampling-based form for Algorithm \ref{alg:idealizedBPG}, where we approximate the moments $\pi_{\theta_k}^{(\alpha)}(\Gamma)$ at every iteration $k \in \mathbb{N}$ using samples from $q_{\theta_k}$.

\begin{algorithm}[htb]
    \SetAlgoLined
     Choose the step-sizes $\{ \tau_k \}_{k \in \mathbb{N}}$, such that $\tau_k \in (0,1]$ for any $k \in \mathbb{N}$.\\
     Choose the sample sizes $\{ N_k \}_{k \in \mathbb{N}}$, such that $N_k \in \mathbb{N} \setminus \{ 0 \}$ for any $k \in \mathbb{N}$.\\
     Set the Rényi parameter $\alpha > 0$.\\
     Initialize the algorithm with $\theta_0 \in \interior \Theta$.\\
     \For{$k = 0,...$}{
     For every $l \in \llbracket 1, N_{k+1} \rrbracket$, sample $x_l \sim q_{\theta_k}$ and compute the non-linear importance weight $w_l^{(\alpha)}$ with their normalized counterpart $\bar{w}_l^{(\alpha)}$ through
     \begin{align}
        \label{eq:nonLinearImportanceWeights}
         w_l^{(\alpha)} = \left( \frac{\tilde{\pi}(x_l)}{q_{\theta_k}(x_l)} \right)^{\alpha},\quad \bar{w}_l^{(\alpha)} = \frac{w_l^{(\alpha)}}{\sum_{l = 1}^{N_{k+1}} w_l^{(\alpha)}}.
     \end{align}\\
     Compute $\theta_{k+\frac{1}{2}}$ such that
     \begin{equation}
        \label{eq:RelaxedMomentMatchingMC}
         q_{\theta_{k+\frac{1}{2}}}(\Gamma) = \tau_{k+1}\left(  \sum_{l = 1}^{N_{k+1}} \bar{w}_l^{(\alpha)} \Gamma(x_l) \right) + (1 - \tau_{k+1}) q_{\theta_k}(\Gamma).
     \end{equation}\\
     Compute $\theta_{k+1} = \prox_{\tau r}^A(\theta_{k+\frac{1}{2}})$ as in Eq.~\eqref{eq:ProxStep}.
     }
     \caption{Monte Carlo proximal relaxed moment-matching algorithm}
     \label{alg:MCrelaxedMomentMatching}
\end{algorithm}

Algorithm \ref{alg:MCrelaxedMomentMatching} is written using an approximated moment-matching update instead of a Bregman gradient update. Contrary to Algorithm \ref{alg:idealizedBPG}, each iteration $k \in \mathbb{N}$ of  Algorithm \ref{alg:MCrelaxedMomentMatching} resorts to an approximation of $\pi_{\theta_k}^{(\alpha)}(\Gamma)$. Recall from Proposition \ref{prop:gradient_f_pi_alpha} that this quantity appears in $\nabla f_{\pi}^{(\alpha)}(\theta_k) = q_{\theta_k}(\Gamma) - \pi_{\theta_k}^{(\alpha)}(\Gamma)$. Therefore, Algorithm \ref{alg:MCrelaxedMomentMatching} uses a noisy approximation of $\nabla f_{\pi}^{(\alpha)}(\Gamma)$, that we denote by $\widetilde{G}_{\pi}^{(\alpha)}(\theta_k)$. Following the proof of Proposition \ref{prop:momentMatchingUpdate} in the reversed direction, we can we can interpret Algorithm \ref{alg:MCrelaxedMomentMatching} as a stochastic Bregman proximal gradient algorithm \citep{xiao2021}. Indeed, $\theta_{k+\frac{1}{2}}$ is updated in Algorithm \ref{alg:MCrelaxedMomentMatching} through
\begin{equation}
    \theta_{k+\frac{1}{2}} = \argmin_{\theta' \in \domain A} \left( f_{\pi}^{(\alpha)}(\theta_k) + \langle \widetilde{G}_{\pi}^{(\alpha)}(\theta_k), \theta'-\theta_k\rangle + \frac{1}{\tau_{k+1}} d_A(\theta', \theta_k) \right)
\end{equation}
which is a stochastic version of the Bregman gradient operator $\gamma_{\tau_{k+1} f_{\pi}^{(\alpha)}}^A$.

\subsection{Comparison with existing methods}
\label{ssec:comparisonExistingMethods}

In Section \ref{ssec:idealizedAlg}, we have proposed Algorithm \ref{alg:idealizedBPG}, that is a Bregman proximal gradient algorithm to solve Problem \eqref{eq:pblmPropAdapt} and we have shown that it is equivalent to a relaxed moment-matching algorithm with additional proximal step. In Section \ref{ssec:samplingAlg}, we have provided Algorithm \ref{alg:MCrelaxedMomentMatching}, which is a sampling-based approximation of Algorithm \ref{alg:idealizedBPG}.

Let us now position these algorithms with respect to existing works. Algorithm \ref{alg:idealizedBPG} is a Bregman proximal gradient algorithm, meaning that it does not rely on the Euclidean geometry, but on another geometry induced by a Bregman divergence \citep{bauschke2016, teboulle2018}. In our case, we use a Bregman divergence related to the KL divergence \citep{nielsen2010}. Bregman proximal gradient algorithms whose Bregman divergence corresponding to the KL divergence \color{black}have been used in \citep{khan2016, khan2017}. The goal of these works was to minimize the exclusive KL divergence (which is mode-seeking) without any regularization, the proximal splitting being motivated by the assumption that the target is the product of a conjugate and a non-conjugate term. In contrast, here, our method aims at minimizing the sum of a Rényi divergence, whose mode-seeking or mass covering behavior can be tuned, with a possible non-smooth regularizer. The analysis of the schemes in \citep{khan2016, khan2017} lies on Euclidean smoothness assumptions for which we exhibit simple counter-examples in Section \ref{ssec:41}.

Natural gradient methods also exploit the geometry of the approximating distributions and have been used in VI to minimize the exclusive KL divergence in \citep{honkela2010, hoffman2013, khan2018, lin2019}. Bregman gradient descent and natural gradient descent methods share close ties in the case of exponential families, as shown by \cite{raskutti2015}. More explicitly, for exponential families, a Bregman gradient descent step in the variable $\theta$ (as it is done in Algorithm \ref{alg:idealizedBPG}) is equivalent to a natural gradient descent step in the variable $\nabla A(\theta)$, with the metric tensor being $\nabla^2 A^*$ instead of $\nabla^2 A$, the latter being equal to the Fisher information matrix \citep{amari2000}. The expression of natural gradients when the exponential family is used can be found in \citep[Theorem 1]{khan2018}. Note also that Bregman gradient descent can be interpreted as a mirror descent algorithm (see \cite{beck2003} for instance). To our knowledge, there is no counterpart of natural gradient descent for proximal operators, limiting the use of previously mentioned natural gradient methods to smooth objectives. In contrast, our point of view allows for a non-smooth regularizing term.

Recent VI methods have also leveraged the Wasserstein geometry to design efficient algorithms for VI. This is a different point of view from previously discussed methods, that are based on measuring the difference between approximating distributions with the KL divergence. A discussion about the difference between the KL divergence and the Wasserstein distance can be found in \citep{khan2022}. Let us mention in this direction \citep{yao2022, lambert2022} which aim at minimizing the exclusive KL divergence over, respectively, mean-field approximations and Gaussian approximations. 

Regarding the minimization of divergences other than the exclusive KL divergence in the black-box context, we can mention the $\alpha$-divergence minimization scheme of \cite{daudel2021monotonic}, which is the closest scheme to ours. Indeed, in the case when their algorithm is used over an exponential family, its updates are similar to the relaxed moment-matching updates described in Proposition \ref{prop:momentMatchingUpdate}. Although the geometry used in \citep{daudel2021monotonic} is not explicited, it appears to be related to the geometry induced by the KL divergence (we discuss this matter in depth in Section \ref{sec:exactConvergence}). Apart from this work, most VI works with "alternative" divergences use standard gradient descent, meaning that they use the Euclidean geometry \citep{hernandez-lobato2016, li2016, dieng2017}. All of the previously mentioned works allow for wider families than the exponential family, but they do not consider the possibility of adding a regularizer, and their theoretical guarantees are weaker than the ones we provide in Section \ref{sec:exactConvergence} for our method.

For the sake of illustration, we explicit, in the case of proposals forming an exponential family, the variational Rényi bound (VRB) algorithm, which is an Euclidean algorithm proposed in \citep{li2016}. In the work of \cite{li2016}, the VRB is an alternative objective akin to the evidence lower bound that does not involve the unknown normalization constant $Z_{\pi}$ is constructed from $\theta \longmapsto RD_{\alpha}(q_{\theta}, \pi)$. Consider in the following $\alpha \in (0,1)$, and $\theta \in \interior \Theta$. Then,
\begin{align*}
    RD_{1-\alpha}(q_{\theta}, \pi) &= \frac{1-\alpha}{\alpha} RD_{\alpha}(\pi, q_{\theta})\\
    &= -\frac{1}{\alpha} \log \left(\int \pi(x)^{\alpha} q_{\theta}(x)^{1-\alpha} \nu(dx) \right)\\
    &= -\frac{1}{\alpha} \log \left( \int \tilde{\pi}(x)^{\alpha} q_{\theta}(x)^{1-\alpha} \nu(dx) \right) + \log Z_{\pi},
\end{align*}
where the first equality comes from \cite[Proposition 2]{vanErven2014}. Therefore, minimizing $\theta \longmapsto RD_{1 - \alpha}(q_{\theta}, \pi)$ is equivalent to maximizing
\begin{equation}
    \label{eq:RényiBound}
    \mathcal{L}_{\pi}^{(\alpha)}(\theta) := \frac{1}{\alpha} \log \left( \int \tilde{\pi}(x)^{\alpha} q_{\theta}(x)^{1-\alpha} \nu(dx) \right).
\end{equation}
Now, following computations as in the proof of Proposition \ref{prop:gradient_f_pi_alpha}, we obtain $ \nabla \mathcal{L}_{\pi}^{(\alpha)}(\theta) = - \frac{1 - \alpha}{\alpha} \nabla f_{\pi}^{(\alpha)}$. Therefore, the gradient ascent algorithm to maximize $\mathcal{L}_{\pi}^{(\alpha)}$ on $\Theta$ reads as
\begin{equation}
    \theta_{k+1} = \theta_k - \tau_{k+1} \nabla f_{\pi}^{(\alpha)}(\theta_k),
\end{equation}
where the factor $\frac{1-\alpha}{\alpha}$ is absorbed by the step-size. Hence, the exact implementation of the VRB algorithm appears as an Euclidean analogue of Algorithm \ref{alg:idealizedBPG}. In the black-box setting, the quantities $\pi_{\theta}^{(\alpha)}(\Gamma)$ are approximated at iteration $k \in \mathbb{N}$ using samples from $q_{\theta_k}$, as it is done for Algorithm \ref{alg:MCrelaxedMomentMatching}, leading to the VRB update
\begin{equation}
    \label{eq:vrbUpdate}
    \theta_{k+1} = \theta_{k} + \tau_{k+1}\left(  \sum_{l = 1}^{N_{k+1}} \bar{w}_l^{(\alpha)} \Gamma(x_l) -  q_{\theta_k}(\Gamma) \right),
\end{equation}
with weights $\{ \bar{w}_l^{(\alpha)}\}_{l=1}^{N_{k+1}}$ computed as in Algorithm \ref{alg:MCrelaxedMomentMatching}.

Proposition \ref{prop:momentMatchingUpdate} unveils the moment-matching interpretation of Algorithm \ref{alg:idealizedBPG} and establishes links between our algorithms and the numerous algorithms in statistics resorting to moment-matching updates. The link between moment-matching and inclusive KL minimization is well-known \citep{bishop2006}. Indeed, the minimizer of Problem \eqref{eq:pblmPropAdapt} when $\alpha = 1$ and $r\equiv 0$ is the distribution $q_{\theta} \in \mathcal{Q}$ satisfying
\begin{equation}
    \label{eq:strictMomentMatching}
    q_{\theta}(\Gamma) = \pi(\Gamma).
\end{equation}
Updating $q_{\theta_{k+1}}$ with this update has been proposing in the AMIS algorithm of \cite{CORNUET12} and in the DM-PMC algorithm of \cite{Cappe08} and is recovered in Algorithm \ref{alg:idealizedBPG} when $\tau_{k+1} = 1$, $\alpha = 1$, and $r \equiv 0$. Hence, Algorithm \ref{alg:idealizedBPG} introduces several additional degrees of freedom to this strict moment-matching approach. Relaxed moment-matching with $\tau_{k} > 0$ can be found in the covariance learning adaptive importance sampling algorithm of \cite{el2019}. However, all the aforementioned works consider KL-based updates, that is with $\alpha = 1$ and no regularization term (i.e., $r \equiv 0$). The algorithm of \cite{daudel2021monotonic} recovers relaxed moment-matching similar to ours, with $\alpha \neq 1$ but still with $r \equiv 0$.

When $\alpha = 1$, the weights of our Algorithm \ref{alg:MCrelaxedMomentMatching} reduce to standard importance sampling weights, with $q_{\theta_k}$ as a proposal distribution. However, for $\alpha \neq 1$, then each weight comes from a non-linear transformation applied to the standard importance sampling weights. A particular type of non-linearity has been studied by \cite{koblents2013population}, where cropped weights have been shown to decrease the variance of the estimator. Some related methodologies for a non-linear transformation of the importance weights can be found in \citep{ionides2008truncated,vehtari2015pareto, korba2022}. One can also see the review of \cite{martino2018comparison}. Note that similarly to cropping the weights, raising them at a power $\alpha \leq 1$, is also a concave transformation of the weights, which may make the estimators more robust too (see the bias-variance trade-off of \citealt[Lemma 1]{korba2022}). This is confirmed by our numerical experiments in Section \ref{sec:numerical}. Note that, in our setting, this transformation comes naturally from the fact that we minimize a Rényi divergence.  

In a different context, moment-matching updates have been used by \cite{grosse2013} to construct a path between two exponential distributions by averaging their moments, corresponding to $\alpha = 1$. Similarly, geometric paths using distributions similar to $\pi_{\theta}^{(\alpha)}$ have been used in \citep{neal2001, del2006sequential}, corresponding to $\tau_k \equiv 0$. This means that our updates in Algorithm \ref{alg:idealizedBPG} use both techniques simultaneously. This is linked to the more general paths between probability distributions proposed by \cite{bui2020}, or to the \emph{q-paths} of \cite{masrani2021}. Actually, moment-matching and geometric averages both are barycenters between $\pi$ and $q_{\theta}$ in the sense of the inclusive or exclusive KL divergence \citep{grosse2013}, indicating updates showcased in Proposition \ref{prop:momentMatchingUpdate} may have a similar interpretation.

\section{Convergence analysis}
\label{sec:exactConvergence}

In this section, we analyze the convergence of Algorithms \ref{alg:idealizedBPG} and \ref{alg:MCrelaxedMomentMatching}. We first explain in Section \ref{ssec:41} in which sense the Bregman geometry induced by the KL divergence is well-adapted to handle Problem \eqref{eq:pblmPropAdapt} while the Euclidean geometry may fail. Convergence results are given in Section \ref{ssec:42} and are compared with existing results in Section \ref{ssec:43}. The proofs can be found in Appendices \ref{section:appendix_f_pi_alpha}-\ref{section:appendixConvergence}.

Before stating our results, we give the assumptions that we use to study the properties of Problem \eqref{eq:pblmPropAdapt} and the convergence of Algorithms \ref{alg:idealizedBPG} and \ref{alg:MCrelaxedMomentMatching}.

\begin{assumption}
    \label{assumption:expFamily}
    The exponential family $\mathcal{Q}$ and the target $\pi$ are such that
    \begin{itemize}
        \item[$(i)$] $\interior \Theta \neq \emptyset$ and $\interior \Theta \subset \domain f_{\pi}^{(\alpha)}$,
        \item[$(ii)$] $\mathcal{Q}$ is \textit{minimal} and \textit{steep}, using the definitions of \cite[Chapter 8]{barndorff-nielsen2014}.
    \end{itemize}
\end{assumption}
Minimality implies, in particular, that, for each distribution in $\mathcal{Q}$, there is a unique vector $\theta$ that parametrizes it. Most exponential families are steep. In particular, if $\Theta$ is open (in this case, $\mathcal{Q}$ is called \emph{regular}), then $\mathcal{Q}$ is steep \cite[Theorem 8.2]{barndorff-nielsen2014}. Note that when $\alpha \in (0,1)$, then $\domain f_{\pi}^{(\alpha)} = \Theta$ so that Assumption \ref{assumption:expFamily} (i) holds. Indeed, $q_{\theta}(x) > 0$ for every $x \in \mathcal{X}$ and, in particular, $q_{\theta}(x)$ is positive as soon as $\pi(x) > 0 $. This means that the quantity in the logarithm is positive. When $\alpha = 1$, we have
\begin{equation*}
    KL(\pi, q_{\theta}) = \int \log(\pi(x)) \pi(x) \nu(dx) - \langle \theta, \pi(\Gamma) \rangle + A(\theta),\, \forall \theta \in \Theta.
\end{equation*}
Thus $\domain f_{\pi}^{(\alpha)} = \Theta$, and Assumption \ref{assumption:expFamily} (i) holds if $\int \log(\pi(x)) \pi(x) \nu(dx)$ and $\pi(\Gamma)$ are finite. However, Assumption \ref{assumption:expFamily} (i) may not be satisfied when $\alpha > 1$.

\begin{assumption}
    \label{assumption:wellPosednessPRMM}
    For any $\theta \in \interior \domain A$, $\pi_{\theta}^{(\alpha)}(\Gamma) \in \interior \domain A^*$. Equivalently, there exists $\theta^{(\alpha)} \in \interior \Theta$ such that $\pi_{\theta}^{(\alpha)}(\Gamma) = q_{\theta^{(\alpha)}}(\Gamma)$.
\end{assumption}
In the case where $\alpha = 1$ and $\mathcal{Q} = \mathcal{G}$, Assumption \ref{assumption:wellPosednessPRMM} is equivalent to the target $\pi$ having finite first and second order moments.

\begin{assumption}
    \label{assumption:regularizer2}
    The regularizer $r$ is in $\Gamma_0(\mathcal{H})$, is bounded from below, and is such that $\interior \Theta \cap \domain r \neq \emptyset$.
\end{assumption}
This assumption is standard in the Bregman optimization literature \citep{bauschke2003monotone}, and allows in particular non-smooth regularizers. For instance, Assumption \ref{assumption:regularizer2} is satisfied by the $\ell_1$ norm often used to enforce sparsity \cite[Section 3.4]{hastie2009}, or by indicator functions of non-empty closed convex sets as in Example \ref{example:reverseInformationProjection}.

\begin{definition}
    \label{def:stationaryPoints}
    Under Assumption \ref{assumption:regularizer2}, we introduce for $\alpha > 0$ the set of \emph{stationary points} of $F_{\pi}^{(\alpha)}$ as $S_{\pi}^{(\alpha)} := \{ \theta \in \interior \Theta \cap \domain f_{\pi}^{(\alpha)},\, 0 \in \nabla f_{\pi}^{(\alpha)}(\theta) + \partial r(\theta) \}$.
\end{definition}

\subsection{Properties of Problem \eqref{eq:pblmPropAdapt}}
\label{ssec:41}

We start by introducing the notions of \emph{relative smoothness} and \emph{relative strong convexity}, which generalize the Euclidean notions of smoothness and strong convexity to the Bregman setting. In the Euclidean setting, having an objective function that satisfies these two notions is desirable to construct efficient algorithms. When these properties are not satisfied, this may indicate that the Euclidean metric is not the best metric to handle the problem and encourages a switch to more adapted Bregman divergences. 

\begin{definition}
    \label{def:relativeSmoothnessConvexity}
    Consider a Legendre function $B$ and a differentiable function $f$. 
    \begin{itemize}
        \item[$(i)$] We say that $f$ is $L$-relatively smooth with respect to $B$ if there exists $L \geq 0$ such that
        \begin{equation*}
            f(\theta) - f(\theta') - \langle \nabla f(\theta'), \theta - \theta' \rangle \leq L d_B(\theta, \theta'),\, \forall (\theta, \theta') \in (\domain B) \times (\interior \domain B).
        \end{equation*}
        \item[$(ii)$] Similarly, we say that $f$ is $\rho$-relatively strongly convex with respect to $B$ is there exists $\rho \geq 0$ such that
        \begin{equation*}
            \rho d_B(\theta, \theta') \leq f(\theta) - f(\theta') - \langle \nabla f(\theta'), \theta - \theta' \rangle,\, \forall (\theta, \theta') \in (\domain B) \times (\interior \domain B).
        \end{equation*}
    \end{itemize}
\end{definition}
These properties give indications about the relation between $f$ and its \emph{tangent approximation at $\theta'$}, defined by $\theta \longmapsto f(\theta') + \langle \nabla f(\theta'), \theta - \theta' \rangle + L d_B(\theta, \theta')$, where $L$ can be changed for $\rho$. This tangent approximation majorizes $f$ in the case of relative smoothness, while it minorizes $f$ in the case of relative strong convexity, as illustrated in Fig.~\ref{fig:relativeProp}. In both cases, $f$ and its tangent approximation coincide at $\theta'$.

In the Euclidean case $B(\cdot) = \frac{1}{2} \| \cdot \|^2$, the relative smoothness property is equivalent to the standard smoothness property, that is the Lipschitz continuity of the gradient, and relative strong convexity is equivalent to the strong convexity property \citep{bauschke2016, hanzely2021}. Note also that relative strong convexity implies convexity (which corresponds to $\rho=0$ in the above). We explain now the interplay between the parameter $\alpha$ of the Rényi divergence and the above notions.

\begin{proposition}
    \label{prop:relativeProperties}
    Let Assumption \ref{assumption:expFamily} be satisfied. The function $f_{\pi}^{(\alpha)}$, defined in \eqref{eq:definition_f_pi_alpha}, is $1$-relatively smooth with respect to $A$, defined in \eqref{eq:logPartitionExponential}, when $\alpha \in (0,1]$. Similarly, the function $f_{\pi}^{(\alpha)}$ is $1$-relatively strongly convex with respect to $A$ when $\alpha \in [1, +\infty)$.
\end{proposition}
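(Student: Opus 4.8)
The plan is to reduce the statement to a comparison of Hessians, using the explicit second-order derivatives supplied by Proposition \ref{prop:gradient_f_pi_alpha}. I would first record the standard second-order characterization of the notions in Definition \ref{def:relativeSmoothnessConvexity}: on the open convex set $\interior \Theta$, where both $A$ and $f_{\pi}^{(\alpha)}$ are of class $\mathcal{C}^2$ (by Propositions \ref{prop:convexityTheta} and \ref{prop:gradient_f_pi_alpha}), the function $f_{\pi}^{(\alpha)}$ is $1$-relatively smooth with respect to $A$ if and only if $A - f_{\pi}^{(\alpha)}$ is convex, equivalently $\nabla^2 f_{\pi}^{(\alpha)}(\theta) \preceq \nabla^2 A(\theta)$ for all $\theta \in \interior\Theta$; and $f_{\pi}^{(\alpha)}$ is $1$-relatively strongly convex with respect to $A$ if and only if $f_{\pi}^{(\alpha)} - A$ is convex, equivalently $\nabla^2 f_{\pi}^{(\alpha)}(\theta) \succeq \nabla^2 A(\theta)$. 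This equivalence follows by writing the subgradient inequality for the convex difference at an interior point and rearranging it into the defining inequalities of Definition \ref{def:relativeSmoothnessConvexity}, using that $\Theta = \domain A$ is convex, as is standard for exponential families.

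With this reduction in hand, the computation is immediate from Proposition \ref{prop:gradient_f_pi_alpha}. For $\alpha \neq 1$ the Hessians differ by
\begin{equation*}
    \nabla^2 f_{\pi}^{(\alpha)}(\theta) - \nabla^2 A(\theta) = (\alpha - 1)\left( \pi_{\theta}^{(\alpha)}(\Gamma \Gamma^\top) - \pi_{\theta}^{(\alpha)}(\Gamma)(\pi_{\theta}^{(\alpha)}(\Gamma))^\top \right),
\end{equation*}
and the matrix in parentheses is the covariance of $\Gamma$ under $\pi_{\theta}^{(\alpha)}$: for any $v \in \mathcal{H}$ its quadratic form equals $\mathrm{Var}_{\pi_{\theta}^{(\alpha)}}(\langle v, \Gamma \rangle) \geq 0$, so it is positive semidefinite. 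Hence the sign of the difference of Hessians is governed entirely by the sign of $\alpha - 1$: when $\alpha \in (0,1)$ one gets $\nabla^2 f_{\pi}^{(\alpha)} \preceq \nabla^2 A$, yielding $1$-relative smoothness, whereas for $\alpha > 1$ one gets $\nabla^2 f_{\pi}^{(\alpha)} \succeq \nabla^2 A$, yielding $1$-relative strong convexity. The boundary case $\alpha = 1$ is covered by both claims at once, since then $\nabla^2 f_{\pi}^{(1)} = \nabla^2 A$, so $f_{\pi}^{(1)} - A$ is affine and thus simultaneously convex and concave.

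The routine parts are the covariance-positivity argument and the sign bookkeeping. I expect the main obstacle to be the technical passage from $\interior\Theta$ to the full domain required by Definition \ref{def:relativeSmoothnessConvexity}, whose inequalities must hold for every $\theta \in \domain A$ (including boundary points) against a gradient taken at $\theta' \in \interior \domain A$. Phrasing the argument as membership of the difference functions $A - f_{\pi}^{(\alpha)}$ and $f_{\pi}^{(\alpha)} - A$ in $\Gamma_0(\mathcal{H})$ is the cleanest way to absorb this, since the subgradient inequality of a convex function automatically holds for all points of its domain. Care is nonetheless needed to confirm properness and lower semicontinuity of these differences, and to control the possibly infinite values of $f_{\pi}^{(\alpha)}$ at the boundary when $\alpha > 1$; for the relative smoothness case ($\alpha \le 1$), Assumption \ref{assumption:expFamily} (i) gives $\domain f_{\pi}^{(\alpha)} = \Theta$, which removes this difficulty.
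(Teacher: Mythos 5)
Your proposal is correct and follows essentially the same route as the paper's proof: reduce to a Hessian comparison via the second-order characterization of relative smoothness and relative strong convexity (which the paper imports from \cite{hanzely2021} rather than re-deriving), identify the difference $\nabla^2 f_{\pi}^{(\alpha)} - \nabla^2 A$ as $(\alpha-1)$ times the covariance of $\Gamma$ under $\pi_{\theta}^{(\alpha)}$, and conclude from its positive semidefiniteness (your variance argument is the same as the paper's Jensen-inequality computation). Your additional care about extending the inequalities from $\interior\Theta$ to boundary points of $\domain A$ is a reasonable refinement that the paper's proof does not dwell on.
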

In Proposition \ref{prop:relativeProperties}, the case $\alpha = 1$ plays a special role, as it is the only value for which we have both relative smoothness and relative strong convexity. Indeed, $f_{\pi}^{(1)}(\theta) = KL(\pi, q_{\theta})$ and $d_A(\theta, \theta') = KL(q_{\theta'}, q_{\theta})$, which gives the intuition that  $f_{\pi}^{(1)}$ and $d_A$ are functions with similar mathematical behaviors, leading to improved properties. 

We now give a result about the existence of minimizers to Problem \eqref{eq:pblmPropAdapt}. Again, this result highlights different behaviors depending on the value of $\alpha$ (i.e., if it is lower, equal or higher than one). 

\begin{proposition}
    \label{prop:ExistenceMinimizerAlpha1}
    Let $\alpha > 0$. 
    \begin{itemize}
        \item[$(i)$] Under Assumptions \ref{assumption:expFamily} and \ref{assumption:regularizer2}, the objective function $F_{\pi}^{(\alpha)}$ is proper (i.e., with nonempty domain), lower semicontinuous, and bounded from below, that is 
        \begin{equation*}
            - \infty < \vartheta_{\pi}^{(\alpha)} := \inf_{\theta \in \Theta} F_{\pi}^{(\alpha)}(\theta).
        \end{equation*}
        \item[$(ii)$] If $\alpha \geq 1$ and Assumptions \ref{assumption:expFamily}, \ref{assumption:wellPosednessPRMM}, and \ref{assumption:regularizer2} are satisfied, then $F_{\pi}^{(\alpha)}$ is coercive and there exists $\theta_* \in \Theta$ such that $F_{\pi}^{(\alpha)}(\theta_*) = \vartheta_{\pi}^{(\alpha)}$. Further, it is unique and in $\interior \Theta$.
    \end{itemize}
\end{proposition}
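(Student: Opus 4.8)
The plan is to prove the two parts separately, using the relative convexity/smoothness properties from Proposition \ref{prop:relativeProperties} together with the Legendre structure of $A$ established in Proposition \ref{prop:convexityTheta}.

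For part (i), I would first argue that $F_{\pi}^{(\alpha)}$ is proper by invoking Assumption \ref{assumption:expFamily}(i), which gives $\interior\Theta \subset \domain f_{\pi}^{(\alpha)}$, together with Assumption \ref{assumption:regularizer2}, which guarantees $\interior\Theta \cap \domain r \neq \emptyset$; hence $\domain F_{\pi}^{(\alpha)} = \domain f_{\pi}^{(\alpha)} \cap \domain r$ is nonempty. Lower semicontinuity follows because $f_{\pi}^{(\alpha)}$ is built from the continuous log-integral in Definition \ref{def:renyiDiv} (and is $\mathcal{C}^2$ on the interior by Proposition \ref{prop:gradient_f_pi_alpha}), while $r \in \Gamma_0(\mathcal{H})$ is lower semicontinuous by Assumption \ref{assumption:regularizer2}; the sum of two lower semicontinuous functions is lower semicontinuous. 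For the lower bound, I would use that $f_{\pi}^{(\alpha)} = RD_{\alpha}(\pi, q_{\theta}) \geq 0$ by Proposition \ref{prop:R\'enyiKL_div} (and likewise $KL \geq 0$ when $\alpha=1$), and that $r$ is bounded from below by Assumption \ref{assumption:regularizer2}; adding these two bounds gives $\vartheta_{\pi}^{(\alpha)} > -\infty$.

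For part (ii), the key is coercivity. When $\alpha \geq 1$, Proposition \ref{prop:relativeProperties} tells us $f_{\pi}^{(\alpha)}$ is $1$-relatively strongly convex with respect to $A$, meaning
\begin{equation*}
    f_{\pi}^{(\alpha)}(\theta) \geq f_{\pi}^{(\alpha)}(\theta') + \langle \nabla f_{\pi}^{(\alpha)}(\theta'), \theta - \theta' \rangle + d_A(\theta, \theta')
\end{equation*}
for a fixed reference point $\theta' \in \interior\Theta \cap \domain r$. The plan is to exploit Assumption \ref{assumption:wellPosednessPRMM}, which furnishes $\theta^{(\alpha)} \in \interior\Theta$ with $\pi_{\theta'}^{(\alpha)}(\Gamma) = q_{\theta^{(\alpha)}}(\Gamma)$, so that the gradient term can be rewritten using Proposition \ref{prop:gradient_f_pi_alpha} as $\nabla f_{\pi}^{(\alpha)}(\theta') = q_{\theta'}(\Gamma) - q_{\theta^{(\alpha)}}(\Gamma) = \nabla A(\theta') - \nabla A(\theta^{(\alpha)})$. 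Substituting and using the three-point identity $d_A(\theta,\theta') + \langle \nabla A(\theta') - \nabla A(\theta^{(\alpha)}), \theta - \theta'\rangle = d_A(\theta, \theta^{(\alpha)}) - d_A(\theta', \theta^{(\alpha)})$, the affine and divergence terms combine into $d_A(\theta, \theta^{(\alpha)})$ up to a constant. Since $d_A(\theta, \theta^{(\alpha)}) = KL(q_{\theta^{(\alpha)}}, q_{\theta})$ by Proposition \ref{prop:KLisBregman}, and $A$ is essentially smooth (so $d_A(\theta,\theta^{(\alpha)}) \to +\infty$ as $\theta$ approaches the boundary or diverges), this forces $f_{\pi}^{(\alpha)}(\theta) \to +\infty$; adding the lower-bounded $r$ preserves coercivity, so $F_{\pi}^{(\alpha)}$ is coercive. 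Coercivity plus the lower semicontinuity from part (i) yields existence of a minimizer $\theta_*$ via a standard infimum-attaining argument (e.g.~\cite{bauschke2011}).

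Finally, for uniqueness and interiority, I would argue that strict relative strong convexity makes $F_{\pi}^{(\alpha)}$ strictly convex on $\interior\Theta$ (since $d_A$ vanishes only on the diagonal by Proposition \ref{prop:BregmanIsDiv}, and $r$ is convex), so any minimizer is unique. The interiority claim, namely $\theta_* \in \interior\Theta$, is the step I expect to be the main obstacle: a priori the minimizer could sit on the boundary $\partial\Theta$. The plan is to rule this out using essential smoothness of $A$ again — as $\theta \to \partial\Theta$ one has $\|\nabla A(\theta)\| \to +\infty$, and the coercivity estimate above shows $F_{\pi}^{(\alpha)}(\theta) \to +\infty$ near the boundary, so the minimizer cannot be attained there and must lie in $\interior\Theta$. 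Care is needed to confirm $\theta_* \in \domain f_{\pi}^{(\alpha)}$ as well, which for $\alpha \geq 1$ requires checking that the reference-point machinery and Assumption \ref{assumption:expFamily}(i) remain consistent at the candidate minimizer.
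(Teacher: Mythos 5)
The main gap is your interiority argument. You claim that ``the coercivity estimate above shows $F_{\pi}^{(\alpha)}(\theta) \to +\infty$ near the boundary,'' but it does not: your estimate gives $f_{\pi}^{(\alpha)}(\theta) \geq \mathrm{const} + d_A(\theta, \theta^{(\alpha)})$, and $d_A(\cdot,\theta^{(\alpha)}) = A(\cdot) - \langle \nabla A(\theta^{(\alpha)}), \cdot\rangle + \mathrm{const}$ stays \emph{finite} at any boundary point of $\Theta$ where $A$ is finite (which happens for steep, non-regular families --- precisely the case where interiority is not automatic). Essential smoothness makes the \emph{gradient} $\|\nabla A\|$ blow up at the boundary, not the function value, so the minimizer cannot be excluded from the boundary by a blow-up of $F_{\pi}^{(\alpha)}$. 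The paper instead applies Fermat's rule $0 \in \partial F_{\pi}^{(\alpha)}(\theta_*)$, splits the subdifferential, and uses that $\domain \partial A = \interior \domain A$ for a Legendre function (Proposition \ref{prop:LegendreFunctionsProperties}), so that $\partial F_{\pi}^{(\alpha)}(\theta_*) = \emptyset$ if $\theta_*$ were on the boundary --- a contradiction. You need this (or an equivalent) argument; your version as stated would fail. A secondary weak point in part (i): lower semicontinuity of $f_{\pi}^{(\alpha)}$ on all of $\Theta$ does not follow from it being ``built from a continuous log-integral'' or from $\mathcal{C}^2$ regularity on $\interior\Theta$; the paper establishes it via the decomposition $f_{\pi}^{(\alpha)} = A + \frac{1}{\alpha-1}\log\circ\tilde h$, the lower semicontinuity of $A$ (Proposition \ref{prop:convexityTheta}), and Fatou's lemma for $\tilde h$.

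Your coercivity argument, on the other hand, is a genuinely different and workable route. The paper proves coercivity only for $\alpha = 1$ via the explicit affine-plus-$A$ decomposition and the fact that $A - \langle\cdot, s\rangle$ is coercive when $s \in \interior\domain A^*$ (citing \cite{bauschke1997}), then extends to $\alpha > 1$ by monotonicity of the R\'enyi divergence in $\alpha$. You instead use $1$-relative strong convexity plus the three-point identity to reduce everything to the coercivity of $d_A(\cdot, \theta^{(\alpha)})$, which treats all $\alpha \geq 1$ uniformly and is arguably cleaner --- but note that the reason $d_A(\cdot,\theta^{(\alpha)})$ is coercive is again that $\nabla A(\theta^{(\alpha)}) \in \interior\domain A^*$ (the same fact the paper cites), not essential smoothness; you should correct that attribution.
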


We now introduce an elementary one-dimensional exponential family that we use to illustrate the notions of relative smoothness and strong convexity. We will also use this family to construct counter-examples to various claims.

\begin{example}
    \label{example:counterexampleGaussian1d}
    The family of one-dimensional centered Gaussian distributions with variance $\sigma^2$ is an exponential family. We denote this family by $\mathcal{G}^1_0$ in the following. It is an exponential family with parameter $\theta = -\frac{1}{2 \sigma^2}$ and sufficient statistics $\Gamma(x) = x^2$. Its log-partition function is $A(\theta) = \frac{1}{2} \log(2 \pi) - \frac{1}{2}\log(-2 \theta)$, whose domain is $\Theta = \mathbb{R}_{--}$.
\end{example}

\begin{figure}[htb]
    \centering
    \begin{subfigure}[b]{0.48\textwidth}
        \includegraphics[width = \textwidth]{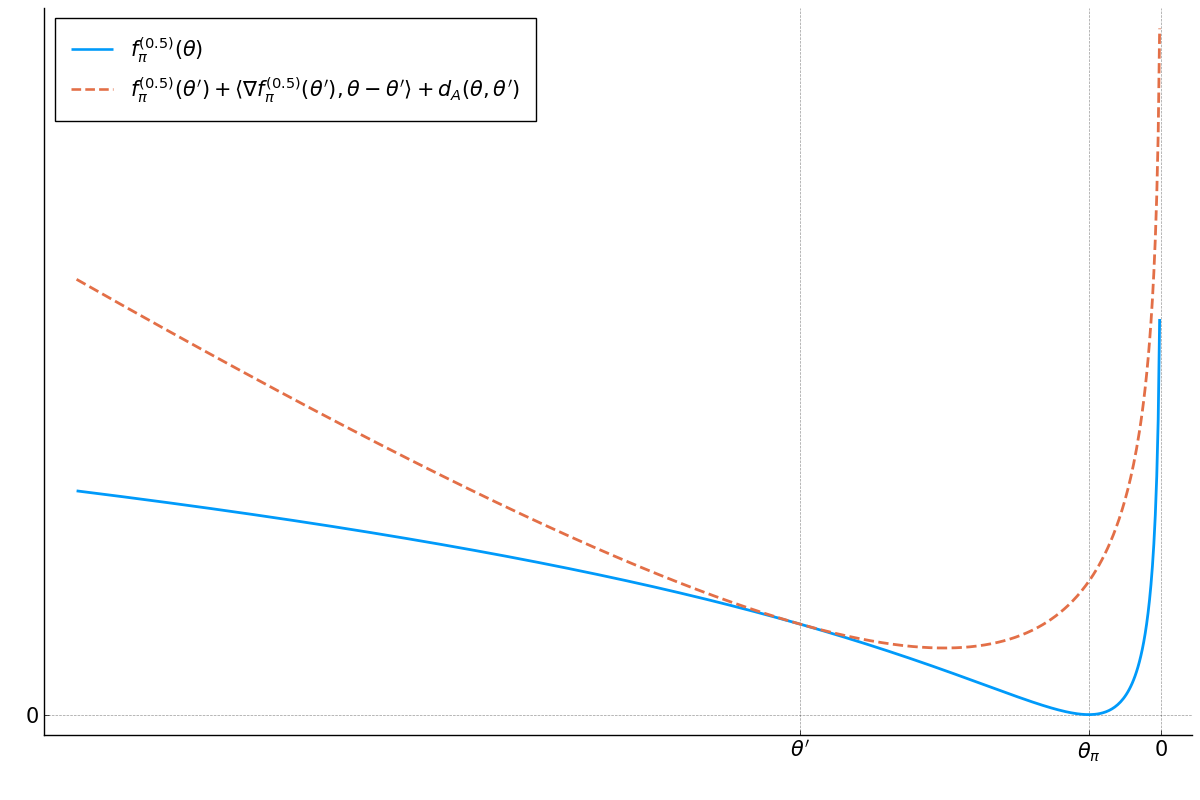}
        \caption{Relative smoothness illustrated in the case $\alpha = 0.5$}
        \label{subfig:noConvexity}
    \end{subfigure} 
    \hfill
    \begin{subfigure}[b]{0.48\textwidth}
        \includegraphics[width = \textwidth]{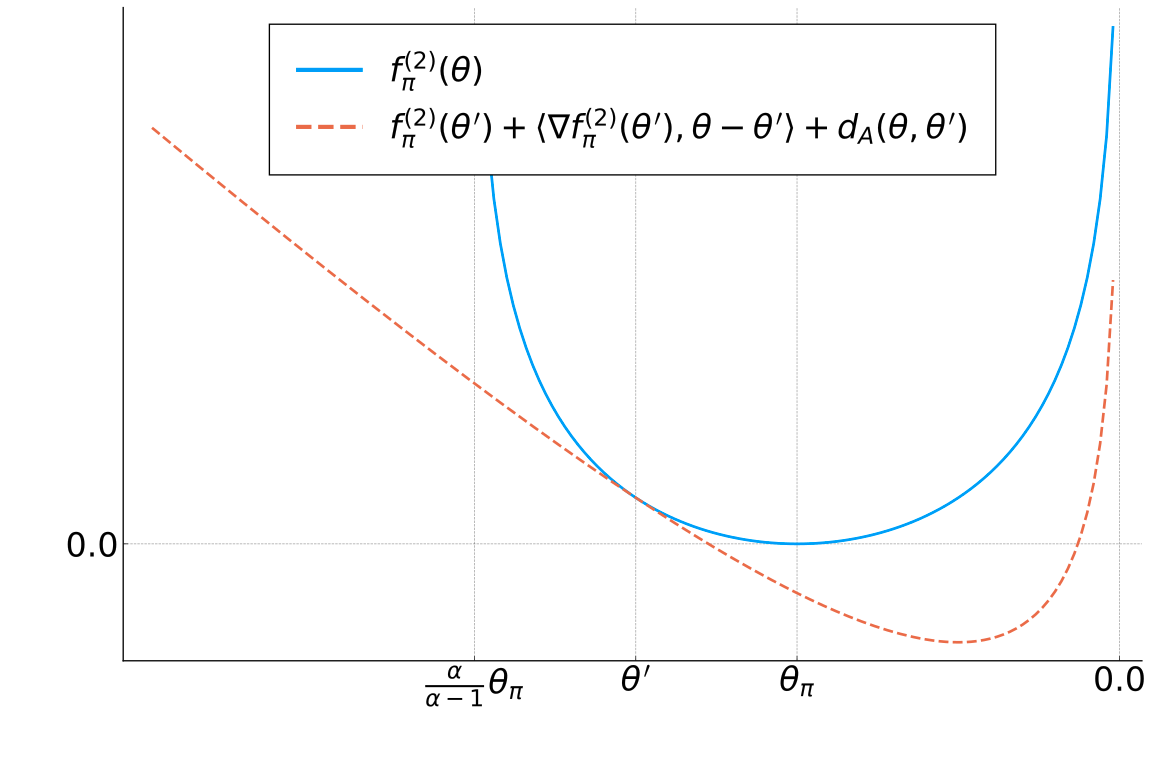}
        \caption{Relative strong convexity shown in the case $\alpha = 2$}
    \end{subfigure} 
    \caption{Plots of $f_{\pi}^{(\alpha)}$ and the tangent approximations described in Definition \ref{def:relativeSmoothnessConvexity}, obtained by choosing $\mathcal{Q}= \mathcal{G}_0^1$ and $\pi \in \mathcal{G}_0^1$ equal to some $q_{\theta_{\pi}}$.}
    \label{fig:relativeProp}
\end{figure}
Figure \ref{fig:relativeProp} illustrates the results of Proposition \ref{prop:relativeProperties} when the exponential family is the family of centered one-dimensional Gaussians $\mathcal{G}_0^1$ and the target as well belongs to this family. One can see that, when $\alpha \leq 1$, relative smoothness is satisfied and $f_{\pi}^{(\alpha)}$ is above its tangent approximation. On the contrary, $\alpha \geq 1$ yields relative strong convexity, ensuring that $f_{\pi}^{(\alpha)}$ is above its tangent approximation.

We now give a result about potential failures of the Euclidean smoothness of $f_{\pi}^{(\alpha)}$. This suggests that the Euclidean metric is not well-suited to minimize $f_{\pi}^{(\alpha)}$.

\begin{proposition}
    \label{prop:counterexample}
    There exist targets $\pi$ and exponential families $\mathcal{Q}$ such that the gradient of  $f_{\pi}^{(\alpha)}$ is not Lipschitz on $\domain f_{\pi}^{(\alpha)}$, for $\alpha >0$.
\end{proposition}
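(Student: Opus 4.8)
The plan is to prove this existence statement by exhibiting a single explicit counter-example, and the one-dimensional centered Gaussian family $\mathcal{G}_0^1$ of Example \ref{example:counterexampleGaussian1d} is the natural vehicle. I would fix a target inside the same family, $\pi = q_{\theta_\pi}$ for some $\theta_\pi \in \mathbb{R}_{--}$, so that everything can be computed in closed form. The guiding idea is that, since $f_{\pi}^{(\alpha)}$ is $\mathcal{C}^2$ on $\interior\Theta \cap \domain f_{\pi}^{(\alpha)}$ by Proposition \ref{prop:gradient_f_pi_alpha} and the domain $\Theta = \mathbb{R}_{--}$ is an open (hence convex) interval, a Lipschitz gradient would force the second derivative $\nabla^2 f_{\pi}^{(\alpha)}$ to be bounded. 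It therefore suffices to show that $\nabla^2 f_{\pi}^{(\alpha)}(\theta)$ blows up along a sequence staying inside $\domain f_{\pi}^{(\alpha)}$, which I will arrange by letting $\theta \to 0^-$ (the boundary of $\Theta$).

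First I would record the ingredients. From $A(\theta) = \frac{1}{2}\log(2\pi) - \frac{1}{2}\log(-2\theta)$ one gets $A''(\theta) = \frac{1}{2\theta^2}$, which diverges as $\theta \to 0^-$. Next, because both $\pi$ and $q_\theta$ lie in $\mathcal{G}_0^1$, the geometric average is again a member of the family: a direct computation of $\pi^\alpha q_\theta^{1-\alpha}$ shows $\pi_{\theta}^{(\alpha)} = q_{\bar\theta}$ with $\bar\theta := \alpha\theta_\pi + (1-\alpha)\theta$, so the covariance of the sufficient statistic under $\pi_{\theta}^{(\alpha)}$ equals $A''(\bar\theta)$. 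Substituting into the Hessian formula of Proposition \ref{prop:gradient_f_pi_alpha} yields, for $\alpha \neq 1$,
\[
\nabla^2 f_{\pi}^{(\alpha)}(\theta) = A''(\theta) + (\alpha-1)\,A''(\bar\theta) = \frac{1}{2\theta^2} + \frac{\alpha-1}{2\bar\theta^2},
\]
while for $\alpha = 1$ one simply has $\nabla^2 f_{\pi}^{(1)}(\theta) = A''(\theta) = \frac{1}{2\theta^2}$.

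I would then check the domain bookkeeping and conclude. Since $\bar\theta \to \alpha\theta_\pi < 0$ as $\theta \to 0^-$, the parameter $\bar\theta$ remains in $\Theta = \mathbb{R}_{--}$ on a left-neighborhood of $0$, so the chosen segment genuinely lies in $\domain f_{\pi}^{(\alpha)}$ for every $\alpha > 0$, and moreover $A''(\bar\theta)$ stays bounded there. Consequently the first term $\frac{1}{2\theta^2} \to +\infty$ dominates: when $\alpha > 1$ the two terms are positive and add, and when $\alpha < 1$ the bounded negative correction cannot offset the blow-up. Hence $\nabla^2 f_{\pi}^{(\alpha)}(\theta) \to +\infty$, and by the mean value inequality an unbounded second derivative on this interval precludes any global Lipschitz constant for $\nabla f_{\pi}^{(\alpha)}$, proving the claim for all $\alpha > 0$.

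The only delicate point, and the one I would treat with care, is the domain analysis for $\alpha > 1$: there the coefficient $1-\alpha$ is negative, so one must confirm both that $\bar\theta$ stays strictly negative as $\theta$ approaches the boundary and that the correction $(\alpha-1)A''(\bar\theta)$ stays finite rather than introducing a competing singularity. Everything else reduces to differentiating $A$ and substituting into the already-established Hessian expression, so I do not expect any real difficulty beyond this verification.
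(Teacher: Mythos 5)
Your proposal is correct and follows essentially the same route as the paper: same family $\mathcal{G}_0^1$, same in-family target $\pi=q_{\theta_\pi}$, and the same second-derivative formula $\tfrac{1}{2\theta^2}+\tfrac{\alpha-1}{2\bar\theta^2}$ (the paper writes the denominator as $((\alpha-1)\theta-\alpha\theta_\pi)^2$, which equals $\bar\theta^2$), with unboundedness of $(f_\pi^{(\alpha)})''$ ruling out a Lipschitz gradient. The only cosmetic differences are that you obtain the Hessian via the identity $\mathrm{Var}_{q_{\bar\theta}}(\Gamma)=A''(\bar\theta)$ rather than by the paper's explicit Gaussian moment integrals, and you use only the blow-up at $\theta\to 0^-$ (which indeed suffices for every $\alpha>0$), whereas the paper also records the additional singularity at $\tfrac{\alpha}{\alpha-1}\theta_\pi$ when $\alpha>1$.
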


\begin{remark}
    The complete proof is in Appendix \ref{section:appendix_f_pi_alpha}. Let us exhibit counter-examples built by using $\mathcal{Q} = \mathcal{G}^1_0$ and targets $q_{\theta_{\pi}} \in \mathcal{G}^1_0$. Recall that $(f_{\pi}^{(\alpha)})'$ is Lipschitz continuous on its domain if and only if $(f_{\pi}^{(\alpha)})''$ is bounded on its domain. In our setting, we have
    \begin{equation*}
        \domain f_{\pi}^{(\alpha)} =
        \begin{cases}
        \Theta &\text{ if } \alpha \leq 1,\\
        (\frac{\alpha}{\alpha - 1} \theta_{\pi}, 0) &\text{ if } \alpha > 1,
        \end{cases}
    \end{equation*}
    and $|(f_{\pi}^{(\alpha)})''(\theta)| \rightarrow +\infty$ when $\theta \rightarrow 0$, and also when $\theta \rightarrow \frac{\alpha}{\alpha - 1} \theta_{\pi}$ for the case $\alpha > 1$.
\end{remark}

The counter-example used in the proof of Proposition \ref{prop:counterexample} illustrates why choosing to work in the Bregman geometry induced by $A$ can be beneficial. Indeed, when $\alpha \in (0,1]$, we have relative smoothness from Proposition \ref{prop:relativeProperties}, while Euclidean smoothness fails. In this case, Euclidean smoothness might be recovered if we restricted $f_{\pi}^{(\alpha)}$ to some set of the form $[\epsilon, +\infty)$. However, this would create a risk of excluding the target value $\theta_{\pi}$.

This counter-example is also a case where Assumption \ref{assumption:expFamily} (i) fails for $\alpha > 1$ since $\domain f_{\pi}^{(\alpha)}$ is strictly included in $\Theta$. One could also restrict the search to a smaller set, but the upper bound of $\domain f_{\pi}^{(\alpha)}$ would depend on the target true parameters. This prevents from restricting the admissible values of $\theta$ in a meaningful way without tight knowledge on the target. Note also that the family $\mathcal{G}_0^1$ has a log-partition function $A$ that is not strongly convex. Finally the family $\mathcal{G}_0^1$ also allows us to show that, even for a log-concave target $\pi \in \mathcal{G}_0^1$, the objective function $f_{\pi}^{(\alpha)}$ might not be convex, as illustrated in Figure \ref{subfig:noConvexity}.

Figure \ref{subfig:noConvexity} shows a situation where the function $f_{\pi}^{(\alpha)}$ is not convex, but has a unique stationary point, which is the global minimizer. We show now that this situation is implied by having $\pi = q_{\theta_{\pi}}$ for some $\theta_{\pi} \in \interior \Theta$ and lead to further results.

\begin{proposition}
    \label{prop:targetSameFamily}
    Suppose that Assumption \ref{assumption:expFamily} is verified and that there exists $\theta_{\pi} \in \interior \Theta$ with $\pi = q_{\theta_{\pi}}$. Then Assumption \ref{assumption:wellPosednessPRMM} is verified, and the function $f_{\pi}^{(\alpha)}$ has a unique minimizer, which is $\theta_{\pi}$ and which is also its only stationary point. Moreover, $\vartheta_{\pi}^{(\alpha)} = f_{\pi}^{(\alpha)}(\theta_{\pi}) = 0$.
\end{proposition}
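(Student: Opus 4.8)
The plan is to exploit the fact that, when $\pi = q_{\theta_{\pi}}$, the geometric average $\pi_{\theta}^{(\alpha)}$ remains inside the exponential family, which renders every relevant quantity explicit. First I would substitute $\pi = q_{\theta_{\pi}}$ into the definition \eqref{geometricav}. Since
$\pi(x)^{\alpha} q_{\theta}(x)^{1-\alpha} = \exp\left( \langle \alpha \theta_{\pi} + (1-\alpha)\theta, \Gamma(x) \rangle - \alpha A(\theta_{\pi}) - (1-\alpha)A(\theta) \right)$,
the normalized density $\pi_{\theta}^{(\alpha)}$ coincides with $q_{\theta^{(\alpha)}}$ for the affine parameter $\theta^{(\alpha)} := \alpha \theta_{\pi} + (1-\alpha)\theta$, as soon as $\theta^{(\alpha)} \in \Theta = \domain A$. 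In particular, combining with \eqref{eq:gradA} this also identifies $\nabla f_{\pi}^{(\alpha)}(\theta) = \nabla A(\theta) - \nabla A(\theta^{(\alpha)})$.

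To establish Assumption \ref{assumption:wellPosednessPRMM} I would show that $\theta^{(\alpha)} \in \interior \Theta$ for every $\theta \in \interior \Theta$. For $\alpha \in (0,1]$ this is immediate, since $\theta^{(\alpha)}$ is a convex combination of $\theta_{\pi}, \theta \in \interior \Theta$ and $\interior \Theta$ is convex (as $\Theta = \domain A$ is convex by Proposition \ref{prop:convexityTheta}). For $\alpha > 1$ the combination is an extrapolation, and here I would invoke Assumption \ref{assumption:expFamily}(i): the affine map $\phi : \theta \mapsto \theta^{(\alpha)}$ has linear part equal to $(1-\alpha)$ times the identity, hence is a homeomorphism of $\mathcal{H}$ (as $\alpha \neq 1$), so $\phi(\interior \Theta)$ is open; finiteness of $f_{\pi}^{(\alpha)}$ on $\interior \Theta$ forces $\int \exp(\langle \theta^{(\alpha)}, \Gamma(x) \rangle) \nu(dx) < +\infty$, i.e. $\phi(\interior \Theta) \subset \Theta$, and an open subset of $\Theta$ must lie in $\interior \Theta$. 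In all cases $\theta^{(\alpha)} \in \interior \Theta$, whence $\pi_{\theta}^{(\alpha)}(\Gamma) = q_{\theta^{(\alpha)}}(\Gamma) = \nabla A(\theta^{(\alpha)}) \in \interior \domain A^*$ by Propositions \ref{prop:convexityTheta} and \ref{prop:LegendreFunctionsProperties}, which is precisely Assumption \ref{assumption:wellPosednessPRMM}.

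For the minimizer and the minimal value I would appeal to the divergence property of Proposition \ref{prop:RényiKL_div}: the map $f_{\pi}^{(\alpha)}(\theta) = RD_{\alpha}(\pi, q_{\theta})$ (respectively $KL(\pi, q_{\theta})$ for $\alpha = 1$) is nonnegative and vanishes if and only if $q_{\theta} = \pi = q_{\theta_{\pi}}$. Minimality of $\mathcal{Q}$ (Assumption \ref{assumption:expFamily}(ii)) upgrades $q_{\theta} = q_{\theta_{\pi}}$ to $\theta = \theta_{\pi}$, so $\theta_{\pi}$ is the unique zero of the nonnegative function $f_{\pi}^{(\alpha)}$, hence its unique minimizer, with $\vartheta_{\pi}^{(\alpha)} = f_{\pi}^{(\alpha)}(\theta_{\pi}) = 0$. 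For the stationarity claim I would set $\nabla f_{\pi}^{(\alpha)}(\theta) = 0$; using the gradient formula of Proposition \ref{prop:gradient_f_pi_alpha} this reads $\nabla A(\theta) = \nabla A(\theta^{(\alpha)})$ (resp. $\nabla A(\theta) = \nabla A(\theta_{\pi})$ when $\alpha=1$), and the injectivity of $\nabla A$ on $\interior \Theta$ from Proposition \ref{prop:LegendreFunctionsProperties} yields $\theta = \theta^{(\alpha)} = \alpha \theta_{\pi} + (1-\alpha)\theta$, which simplifies to $\theta = \theta_{\pi}$ since $\alpha \neq 0$. Thus $\theta_{\pi}$ is the only stationary point.

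I expect the main obstacle to be the $\alpha > 1$ case of the first step: in contrast with the convex-combination regime, ruling out that $\theta^{(\alpha)}$ escapes to the boundary of $\Theta$ genuinely requires Assumption \ref{assumption:expFamily}(i) together with the openness argument, and it is the only place where the regime $\alpha > 1$ must be treated separately. The remaining steps are then routine consequences of the divergence inequality and the bijectivity of $\nabla A$.
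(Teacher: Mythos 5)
Your proof is correct and follows essentially the same route as the paper's: identify $\pi_{\theta}^{(\alpha)}$ with $q_{\alpha\theta_{\pi}+(1-\alpha)\theta}$, deduce Assumption \ref{assumption:wellPosednessPRMM} from membership of that parameter in $\interior\Theta$, get the unique minimizer from the divergence property plus minimality, and get the unique stationary point from the gradient formula and injectivity of $\nabla A$. The one place where you do more than the paper is the case $\alpha>1$: the paper's proof places $\alpha\theta_{\pi}+(1-\alpha)\theta$ in $\interior\Theta$ by appealing to convexity of $\Theta$, an argument that is only valid when both coefficients are nonnegative, i.e.\ for $\alpha\in(0,1]$. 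Your separate treatment of the extrapolation regime --- using Assumption \ref{assumption:expFamily}(i) to force $\alpha\theta_{\pi}+(1-\alpha)\theta\in\Theta$ and then the homeomorphism/openness argument to upgrade this to $\interior\Theta$ --- is correct and actually closes a small gap in the published proof; it is also consistent with the paper's own counter-example, where for $\alpha>1$ Assumption \ref{assumption:expFamily}(i) fails precisely because $\domain f_{\pi}^{(\alpha)}$ is strictly smaller than $\Theta$.
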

Proposition \ref{prop:targetSameFamily} shows that when $r \equiv 0$ and $\pi = q_{\theta_{\pi}}$ with $\theta_{\pi} \in \interior \Theta$, the stationary point of Problem \eqref{eq:pblmPropAdapt} is unique and equal to the global minimizer of this problem. The next proposition investigate the behavior of $f_{\pi}^{(\alpha)}$ around its minimizer $\theta_{\pi}$. 

\begin{proposition}
    \label{prop:PolyakInequality}
    Suppose that Assumption \ref{assumption:expFamily} is satisfied and that $\pi = q_{\theta_{\pi}}$ for $\theta_{\pi} \in \interior \Theta$. Consider $\theta \in \interior \Theta$ in a ball of the form $B(\theta_{\pi}, \upsilon) \subset \interior \domain \Theta$ for some $\upsilon > 0$. Then, for any $\alpha > 0, \alpha \neq 1$, we have that
    \begin{enumerate}
        \item[$(i)$] $f_{\pi}^{(\alpha)}$ has a quadratic behavior in the neighborhood of $\theta_{\pi}$ i.e., 
        \begin{equation*}
            f_{\pi}^{(\alpha)}(\theta) = \frac{\alpha}{2} \| \theta - \theta_{\pi} \|^2_{\nabla^2 A (\theta_{\pi})} + o(\upsilon^2),
        \end{equation*}
        \item[$(ii)$] $f_{\pi}^{(\alpha)}$ satisfies a Polyak-\L ojasiewicz inequality around $\theta_{\pi}$:
        \begin{equation*}
            f_{\pi}^{(\alpha)}(\theta) \leq \frac{1}{2\alpha} \| \nabla f_{\pi}^{(\alpha)}(\theta) \|^2_{\nabla^2 A^* (\nabla A(\theta_{\pi}))} + o(\upsilon^2).
        \end{equation*}
    \end{enumerate}
\end{proposition}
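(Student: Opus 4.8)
The plan is to prove both statements via a second-order Taylor expansion of $f_{\pi}^{(\alpha)}$ around $\theta_{\pi}$, leveraging the facts that $\theta_{\pi}$ is the minimizer where $f_{\pi}^{(\alpha)}(\theta_{\pi}) = 0$ and $\nabla f_{\pi}^{(\alpha)}(\theta_{\pi}) = 0$ (from Proposition~\ref{prop:targetSameFamily}, combined with the gradient formula of Proposition~\ref{prop:gradient_f_pi_alpha}). The key input is the Hessian at $\theta_{\pi}$. From Proposition~\ref{prop:gradient_f_pi_alpha}, $\nabla^2 f_{\pi}^{(\alpha)}(\theta) = \nabla^2 A(\theta) + (\alpha - 1)\left( \pi_{\theta}^{(\alpha)}(\Gamma \Gamma^\top) - \pi_{\theta}^{(\alpha)}(\Gamma)(\pi_{\theta}^{(\alpha)}(\Gamma))^\top \right)$. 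When $\pi = q_{\theta_{\pi}}$, the geometric average $\pi_{\theta_{\pi}}^{(\alpha)}$ collapses back to $q_{\theta_{\pi}}$, so the bracketed covariance term equals the covariance of $\Gamma$ under $q_{\theta_{\pi}}$, which is exactly $\nabla^2 A(\theta_{\pi})$. Hence $\nabla^2 f_{\pi}^{(\alpha)}(\theta_{\pi}) = \nabla^2 A(\theta_{\pi}) + (\alpha-1)\nabla^2 A(\theta_{\pi}) = \alpha \nabla^2 A(\theta_{\pi})$. This is the crucial simplification that produces the factor $\alpha$.

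For statement (i), I would write the Taylor expansion
\begin{equation*}
    f_{\pi}^{(\alpha)}(\theta) = f_{\pi}^{(\alpha)}(\theta_{\pi}) + \langle \nabla f_{\pi}^{(\alpha)}(\theta_{\pi}), \theta - \theta_{\pi} \rangle + \frac{1}{2} \langle \nabla^2 f_{\pi}^{(\alpha)}(\theta_{\pi})(\theta - \theta_{\pi}), \theta - \theta_{\pi} \rangle + o(\|\theta - \theta_{\pi}\|^2),
\end{equation*}
valid since $f_{\pi}^{(\alpha)}$ is $\mathcal{C}^2$ on $\interior \Theta \cap \domain f_{\pi}^{(\alpha)}$ by Proposition~\ref{prop:gradient_f_pi_alpha}. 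The first two terms vanish, and substituting $\nabla^2 f_{\pi}^{(\alpha)}(\theta_{\pi}) = \alpha \nabla^2 A(\theta_{\pi})$ gives $\frac{\alpha}{2}\|\theta - \theta_{\pi}\|^2_{\nabla^2 A(\theta_{\pi})} + o(\upsilon^2)$ for $\theta \in B(\theta_{\pi}, \upsilon)$, as claimed. Here I use the weighted-norm notation $\|v\|^2_M := \langle Mv, v\rangle$.

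For statement (ii), I would similarly expand $\nabla f_{\pi}^{(\alpha)}(\theta) = \nabla^2 f_{\pi}^{(\alpha)}(\theta_{\pi})(\theta - \theta_{\pi}) + o(\|\theta-\theta_{\pi}\|) = \alpha \nabla^2 A(\theta_{\pi})(\theta - \theta_{\pi}) + o(\upsilon)$. Then I would compute $\|\nabla f_{\pi}^{(\alpha)}(\theta)\|^2_{\nabla^2 A^*(\nabla A(\theta_{\pi}))}$. The key algebraic fact I need is that $\nabla^2 A^*(\nabla A(\theta_{\pi}))$ is the inverse of $\nabla^2 A(\theta_{\pi})$; this follows from Proposition~\ref{prop:LegendreFunctionsProperties}(i) ($\nabla A^* = (\nabla A)^{-1}$) by differentiating the identity $\nabla A^*(\nabla A(\theta)) = \theta$, which yields $\nabla^2 A^*(\nabla A(\theta_{\pi})) \nabla^2 A(\theta_{\pi}) = I$. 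Substituting,
\begin{equation*}
    \|\nabla f_{\pi}^{(\alpha)}(\theta)\|^2_{\nabla^2 A^*(\nabla A(\theta_{\pi}))} = \alpha^2 \langle [\nabla^2 A(\theta_{\pi})]^{-1} \nabla^2 A(\theta_{\pi})(\theta-\theta_{\pi}), \nabla^2 A(\theta_{\pi})(\theta-\theta_{\pi}) \rangle + o(\upsilon^2) = \alpha^2 \|\theta-\theta_{\pi}\|^2_{\nabla^2 A(\theta_{\pi})} + o(\upsilon^2).
\end{equation*}
Comparing with (i), $f_{\pi}^{(\alpha)}(\theta) = \frac{\alpha}{2}\|\theta-\theta_{\pi}\|^2_{\nabla^2 A(\theta_{\pi})} + o(\upsilon^2) = \frac{1}{2\alpha}\cdot \alpha^2 \|\theta-\theta_{\pi}\|^2_{\nabla^2 A(\theta_{\pi})} + o(\upsilon^2) = \frac{1}{2\alpha}\|\nabla f_{\pi}^{(\alpha)}(\theta)\|^2_{\nabla^2 A^*(\nabla A(\theta_{\pi}))} + o(\upsilon^2)$, which is exactly the desired Polyak–\L ojasiewicz inequality (in fact an equality up to higher order).

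The main obstacle I anticipate is bookkeeping the remainder terms carefully: I must ensure the $o(\|\theta-\theta_{\pi}\|)$ error in the gradient expansion is correctly propagated through the squaring and the weighted norm to yield a genuine $o(\upsilon^2)$, rather than a larger-order error, and verify that the constant matrix $\nabla^2 A^*(\nabla A(\theta_{\pi}))$ absorbs cross terms without degrading the order. The invertibility of $\nabla^2 A(\theta_{\pi})$ — needed both to make the dual Hessian well-defined and to perform the cancellation — is guaranteed by strict convexity of $A$ (Proposition~\ref{prop:convexityTheta}) together with minimality of $\mathcal{Q}$, which rules out degenerate directions; I would flag this nondegeneracy explicitly as the structural hypothesis underpinning the computation.
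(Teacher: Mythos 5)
Your proof is correct, but it takes a genuinely different route from the paper's. The paper first writes the closed-form Jensen-gap identity $f_{\pi}^{(\alpha)}(\theta) = \frac{1}{1-\alpha}\bigl( \alpha A(\theta_{\pi}) + (1-\alpha)A(\theta) - A(\alpha\theta_{\pi} + (1-\alpha)\theta) \bigr)$, valid because $\pi = q_{\theta_{\pi}}$, and then Taylor-expands the single function $A$ around $\theta_{\pi}$; for (ii) it builds the quadratic tangent minorization $\theta \mapsto f_{\pi}^{(\alpha)}(\theta') + \langle \nabla f_{\pi}^{(\alpha)}(\theta'), \theta - \theta'\rangle + \frac{\alpha}{2}\|\theta-\theta'\|^2_{\nabla^2 A(\theta_{\pi})}$, minimizes it explicitly via the optimality condition, and evaluates at $\theta = \theta_{\pi}$. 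You instead invoke the general Hessian formula of Proposition~\ref{prop:gradient_f_pi_alpha} and observe that $\pi_{\theta_{\pi}}^{(\alpha)} = q_{\theta_{\pi}}$, so that the covariance term collapses to $\nabla^2 A(\theta_{\pi})$ and $\nabla^2 f_{\pi}^{(\alpha)}(\theta_{\pi}) = \alpha\nabla^2 A(\theta_{\pi})$; both (i) and (ii) then follow from one Taylor expansion of $f_{\pi}^{(\alpha)}$ and one of its gradient, compared term by term. The two arguments rely on the same structural inputs (the assumption $\pi = q_{\theta_{\pi}}$ enters your proof through the collapse of the geometric average, and the paper's through the closed form; both use $\nabla^2 A^*(\nabla A(\theta_{\pi}))\nabla^2 A(\theta_{\pi}) = I$), but your route to (ii) is shorter and makes transparent that the Polyak--\L ojasiewicz bound is in fact an equality up to $o(\upsilon^2)$, whereas the paper obtains it as a genuine minorization. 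Two minor points you should make explicit: the identity $\nabla^2 A(\theta) = \pi_{\theta}(\Gamma\Gamma^\top)$-type covariance of $\Gamma$ under $q_{\theta}$ is the standard exponential-family fact you are using to identify the bracketed term, and the uniformity of the Peano remainders over $B(\theta_{\pi},\upsilon)$ rests on the $\mathcal{C}^2$ regularity granted by Proposition~\ref{prop:gradient_f_pi_alpha}; with those noted, your bookkeeping of the cross terms ($O(\upsilon)\cdot o(\upsilon) = o(\upsilon^2)$) goes through as you anticipate.
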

Proposition \ref{prop:PolyakInequality} $(i)$ shows that, in the neighborhood of its minimizer, function $f_{\pi}^{(\alpha)}$ has a quadratic behavior. This behavior is mentioned in \citep[Equation (3.11)]{amari2000} and in \citep[Equation (50)]{vanErven2014} but without precise statements on the needed regularity. We show here that this type of result holds under very mild assumptions. The consequence of this behavior is that $f_{\pi}^{(\alpha)}$ satisfies a type of Polyak-\L ojasiewicz inequality around the point $\theta_{\pi}$. This type of condition has been used to prove geometric rates of convergence to minimizers for a variety of optimization algorithm, while being weaker than strong convexity~\citep{karimi2016}.

\subsection{Convergence analysis of Algorithms \ref{alg:idealizedBPG} and \ref{alg:MCrelaxedMomentMatching}} 
\label{ssec:42}

We now present our convergence results for Algorithms \ref{alg:idealizedBPG} and \ref{alg:MCrelaxedMomentMatching}. We start with Algorithm \ref{alg:idealizedBPG}, which is deterministic, by first giving results for values of $\alpha$ in $(0,1]$, and then stronger results when $\alpha = 1$. We then go to Algorithm \ref{alg:MCrelaxedMomentMatching}, which is based on sampling and thus stochastic. We first give our assumption on the noise induced by the sampling procedure and proceed to first give results for $\alpha \in (0,1]$, followed by more precise results for $\alpha = 1$. Note that results for $\alpha \in (0,1]$ only exploit the relative smoothness, while the results for $\alpha = 1$ rely on the relative smoothness and the relative strong convexity of $f_{\pi}^{(1)}$.

We first show how Assumptions \ref{assumption:expFamily}, \ref{assumption:wellPosednessPRMM}, and \ref{assumption:regularizer2} ensure the well-posedness of the operators introduced in Definition \ref{def:BPGoperators} and used to construct our algorithms.

\begin{proposition}
    \label{prop:wellPosednessOperators}$\phantom{a}$
    \begin{itemize}
        \item[$(i)$] Under Assumptions \ref{assumption:expFamily} and \ref{assumption:wellPosednessPRMM}, if $\tau \in (0,1]$, the operator $\gamma_{\tau f_{\pi}^{(\alpha)}}^A$ has domain $\interior \Theta$ where it is single-valued, and $\gamma_{\tau f_{\pi}^{(\alpha)}}^A(\theta) \in \interior \Theta$ for every $\theta \in \interior \Theta$. 
        \item[$(ii)$] Under Assumptions \ref{assumption:expFamily} and \ref{assumption:regularizer2}, the domain of $\prox_{\tau r}^A $ is $\interior \Theta$. On $\interior \Theta$, $\prox_{\tau r}^A $ is single-valued, and $\prox_{\tau r}^A(\theta) \in \interior \Theta$ for every $\theta \in \interior \Theta$.
        \item[$(iii)$] If Assumptions \ref{assumption:expFamily}, \ref{assumption:wellPosednessPRMM}, and \ref{assumption:regularizer2} are satisfied, and $\tau \in (0,1]$, $T_{\tau F_{\pi}^{(\alpha)}}^A = \prox_{\tau r}^A \circ \gamma_{\tau f_{\pi}^{(\alpha)}}^A$, and a point $\theta \in \interior \Theta$ is a fixed point of $T^A_{\tau F_{\pi}^{(\alpha)}}$ if and only if $\theta \in S_{\pi}^{(\alpha)}$.
    \end{itemize}
\end{proposition}
Proposition \ref{prop:wellPosednessOperators} implies in particular that under Assumptions \ref{assumption:expFamily}, \ref{assumption:wellPosednessPRMM}, and \ref{assumption:regularizer2}, the iterates of Algorithm \ref{alg:idealizedBPG} are well-defined and stay in $\interior \Theta$ for step-sizes $\{ \tau_k \}_{k \in \mathbb{N}}$ belonging to $(0,1]$. Proposition \ref{prop:wellPosednessOperators} also establishes that the fixed points of Algorithm \ref{alg:idealizedBPG} are the stationary points of Problem \eqref{eq:pblmPropAdapt}. 

We now give our convergence results for Algorithm \ref{alg:idealizedBPG} for $\alpha \in (0,1]$.

\begin{proposition}
    \label{prop:generalConvResult}
    Consider a sequence $\{ \theta_k \}_{k \in \mathbb{N}}$ generated by Algorithm \ref{alg:idealizedBPG} from $\theta_0 \in \interior \Theta$, with $\alpha \in (0,1]$ and a sequence of step-sizes $\{ \tau_k \}_{k \in \mathbb{N}}$ such that $\tau_k \in (0, 1]$.  Under Assumptions \ref{assumption:expFamily}, \ref{assumption:wellPosednessPRMM}, and \ref{assumption:regularizer2}, we have the following results.
    \begin{enumerate}
        \item[$(i)$] The sequence $\{ F_{\pi}^{(\alpha)}(\theta_k) \}_{k \in \mathbb{N}}$ is non-increasing.
        \item[$(ii)$] If $F_{\pi}^{(\alpha)}(\theta_{K+1}) = F_{\pi}^{(\alpha)}(\theta_K)$ for $K \in \mathbb{N}$, then $\theta_k = \theta_K$ for all $k \geq K$ and $\theta_K \in S_{\pi}^{(\alpha)}$.
        \item[$(iii)$] $\sum_{k \geq 0} KL(q_{\theta_{k}}, q_{\theta_{k+1}}) < +\infty$.
        \item[$(iv)$]  Let $K \in \mathbb{N}$ the first iterate such that $d_A(\theta_K, \theta_{K+1}) \leq \varepsilon$ for some $\varepsilon > 0$. Then $K$ is at most equal to $\frac{1}{\varepsilon} \left( F_{\pi}^{(\alpha)}(\theta_0) - \vartheta_{\pi}^{(\alpha)} \right)$.
        
        \item[$(v)$] \color{blue}If $\tau_k \in [\epsilon, 1]$ for some $\epsilon > 0$ and there exists a non-empty compact set $C \subset \interior \Theta$ containing the iterates, there exists a sequence $\{ \rho_k \}_{k \in \mathbb{N}}$ such that $\rho_k \in \nabla f_{\pi}^{(\alpha)}(\theta_k) + \partial r(\theta_k)$ for every $k \in \mathbb{N}$ and $\rho_k \xrightarrow[k \rightarrow +\infty]{} 0$. If, additionally, $r$ is continuous on $C$, then the limit points of $\{ \theta_k \}_{k\in\mathbb{N}}$ are in $S_{\pi}^{(\alpha)}$.
    \end{enumerate}
\end{proposition}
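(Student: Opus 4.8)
The plan is to derive all five claims from a single sufficient-decrease inequality for the Bregman proximal gradient operator $T^A_{\tau F_{\pi}^{(\alpha)}}$, exploiting that by Proposition \ref{prop:relativeProperties} the function $f_{\pi}^{(\alpha)}$ is $1$-relatively smooth with respect to $A$ on the relevant range $\alpha \in (0,1]$, and that by Proposition \ref{prop:equivMomentMatchingBPG} the iterates obey $\theta_{k+1} = T^A_{\tau_{k+1} F_{\pi}^{(\alpha)}}(\theta_k)$ with $\theta_k \in \interior \Theta$. First I would write the first-order optimality condition of the subproblem defining $\theta_{k+1}$ (Definition \ref{def:BPGoperators}(iii)): there exists $s_k \in \partial r(\theta_{k+1})$ with $\nabla f_{\pi}^{(\alpha)}(\theta_k) + s_k + \tfrac{1}{\tau_{k+1}}\big(\nabla A(\theta_{k+1}) - \nabla A(\theta_k)\big) = 0$. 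Combining this with the convexity of $r$ through the three-point identity $d_A(u,\theta_k) - d_A(u,\theta_{k+1}) - d_A(\theta_{k+1},\theta_k) = \langle \nabla A(\theta_{k+1}) - \nabla A(\theta_k),\, u - \theta_{k+1}\rangle$ evaluated at $u = \theta_k$, and then invoking $1$-relative smoothness to bound $f_{\pi}^{(\alpha)}(\theta_{k+1})$, I obtain the key inequality
\begin{equation*}
F_{\pi}^{(\alpha)}(\theta_{k+1}) \leq F_{\pi}^{(\alpha)}(\theta_k) - \Big(\tfrac{1}{\tau_{k+1}} - 1\Big) d_A(\theta_{k+1}, \theta_k) - \tfrac{1}{\tau_{k+1}} d_A(\theta_k, \theta_{k+1}).
\end{equation*}
Since $\tau_{k+1} \in (0,1]$, both correction terms are nonpositive by Proposition \ref{prop:BregmanIsDiv}, which gives (i) immediately. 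The strictly positive weight $\tfrac{1}{\tau_{k+1}} \geq 1$ on the second term is exactly what makes the equality case (ii) go through even when $\tau_{k+1}=1$: if $F_{\pi}^{(\alpha)}(\theta_{K+1}) = F_{\pi}^{(\alpha)}(\theta_K)$, both nonnegative terms must vanish, so $d_A(\theta_K,\theta_{K+1}) = 0$ and hence $\theta_{K+1} = \theta_K$; then $\theta_K$ is a fixed point of $T^A_{\tau_{K+1} F_{\pi}^{(\alpha)}}$, which by Proposition \ref{prop:wellPosednessOperators}(iii) is a stationary point, and since stationarity is equivalent to being a fixed point for every admissible step-size, a trivial induction yields $\theta_k = \theta_K$ for all $k \geq K$.

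For (iii) and (iv) I would telescope the decrease inequality. Summing it and using that $F_{\pi}^{(\alpha)}$ is bounded below by $\vartheta_{\pi}^{(\alpha)}$ (Proposition \ref{prop:ExistenceMinimizerAlpha1}(i)) gives $\sum_{k} \tfrac{1}{\tau_{k+1}} d_A(\theta_k,\theta_{k+1}) \leq F_{\pi}^{(\alpha)}(\theta_0) - \vartheta_{\pi}^{(\alpha)} < +\infty$; identifying each consecutive Bregman divergence with a Kullback--Leibler divergence through Proposition \ref{prop:KLisBregman} yields the summability statement (iii). For (iv), if $d_A(\theta_k,\theta_{k+1}) > \varepsilon$ for every $k < K$, then using $\tfrac{1}{\tau_{k+1}} \geq 1$ the same telescoping bound gives $K\varepsilon < \sum_{k<K} d_A(\theta_k,\theta_{k+1}) \leq F_{\pi}^{(\alpha)}(\theta_0) - \vartheta_{\pi}^{(\alpha)}$, i.e. the announced complexity estimate.

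The main work is (v), which I expect to be the central obstacle. From (iii) I get $d_A(\theta_k,\theta_{k+1}) \to 0$; since the iterates stay in the compact $C \subset \interior \Theta$, on the (compact, convex) closed convex hull of $C$ — still inside the convex set $\interior \Theta$ — the $\mathcal{C}^2$ strictly convex function $A$ satisfies $\nabla^2 A \succeq m I$ for some $m > 0$, whence $d_A(\theta_k,\theta_{k+1}) \geq \tfrac{m}{2}\|\theta_{k+1}-\theta_k\|^2$ and therefore $\|\theta_{k+1}-\theta_k\| \to 0$. Given a convergent subsequence $\theta_{k_j} \to \theta_* \in C$, this forces $\theta_{k_j+1} \to \theta_*$ as well. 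I would then pass to the limit in the optimality condition rewritten as $s_{k_j} = -\nabla f_{\pi}^{(\alpha)}(\theta_{k_j}) - \tfrac{1}{\tau_{k_j+1}}\big(\nabla A(\theta_{k_j+1}) - \nabla A(\theta_{k_j})\big)$: the gradient term converges to $-\nabla f_{\pi}^{(\alpha)}(\theta_*)$ by continuity of $\nabla f_{\pi}^{(\alpha)}$ (Proposition \ref{prop:gradient_f_pi_alpha}), while the last term vanishes because $\nabla A$ is continuous on $C$, $\|\theta_{k_j+1}-\theta_{k_j}\| \to 0$, and $\tfrac{1}{\tau_{k_j+1}} \leq 1/\epsilon$ is bounded. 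The delicate point is to conclude $-\nabla f_{\pi}^{(\alpha)}(\theta_*) \in \partial r(\theta_*)$ from $s_{k_j} \in \partial r(\theta_{k_j+1})$: this requires the closedness of the graph of $\partial r$, which for convex lower-semicontinuous $r$ holds once $r(\theta_{k_j+1}) \to r(\theta_*)$, and securing that convergence is precisely why continuity of $r$ on $C$ is assumed. Combining these facts gives $0 \in \nabla f_{\pi}^{(\alpha)}(\theta_*) + \partial r(\theta_*)$ with $\theta_* \in \interior \Theta \cap \domain f_{\pi}^{(\alpha)}$, that is, $\theta_* \in S_{\pi}^{(\alpha)}$.
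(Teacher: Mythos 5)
Your proposal follows essentially the same route as the paper: the same sufficient-decrease inequality (which the paper imports from the Bregman proximal gradient literature via its Lemma on sufficient decrease rather than re-deriving it through the three-point identity, but the inequality is identical), the same telescoping arguments for (i)--(iv), and for (v) the same strategy of deducing $d_A(\theta_k,\theta_{k+1})\to 0$, upgrading this to $\|\theta_{k+1}-\theta_k\|\to 0$ on the compact set, and passing to the limit in the optimality condition of the proximal step.

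Two remarks on (v). First, your justification that $\nabla^2 A \succeq m I$ on the closed convex hull of $C$ \emph{because} $A$ is $\mathcal{C}^2$ and strictly convex is not a valid implication in general (consider $x\mapsto x^4$ at the origin): strict convexity does not force a pointwise positive-definite Hessian. The conclusion does hold here, but for a different reason: minimality of $\mathcal{Q}$ (Assumption \ref{assumption:expFamily}(ii)) makes $\nabla^2 A(\theta)=\mathrm{Cov}_{q_{\theta}}(\Gamma)$ positive definite on $\interior\Theta$, and its smallest eigenvalue is then bounded away from zero on a compact by continuity. The paper sidesteps Hessian bounds entirely by invoking uniform convexity of $A$ on compact convex subsets of $\interior\Theta$, which yields only an increasing modulus $\psi$ rather than a quadratic one, but that is enough to pass from $d_A\to 0$ to $\|\cdot\|\to 0$. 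Second, your finishing move differs mildly from the paper's: you conclude via the strong--strong closedness of the graph of the maximally monotone operator $\partial r$, whereas the paper constructs Fréchet subgradients $\varrho_k\in\partial_F F_{\pi}^{(\alpha)}(\theta_k)$ with $\varrho_k\to 0$ and invokes the definition of the limiting subdifferential, using the continuity of $r$ on $C$ precisely to guarantee $F_{\pi}^{(\alpha)}(\theta_{\varphi(k)})\to F_{\pi}^{(\alpha)}(\theta_{\mathrm{lim}})$. Your variant is legitimate for convex $r$ and in fact does not need the continuity of $r$ at that step (graph closedness of $\partial r$ holds for any proper lower-semicontinuous convex function); both arguments land on the same characterization of $S_{\pi}^{(\alpha)}$.
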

The additional assumption used for Proposition~\ref{prop:generalConvResult} $(v)$ is satisfied for instance if $r = \iota_C$, for a compact $C \subset \interior \Theta$. The continuity assumption on $r$ is also satisfied by the $\ell_1$ norm. In this case, $r$ is also coercive, ensuring that the iterates stay in a compact set. However, this does not ensure that the iterates do not approach the boundary of $\Theta$.  Note that when $S_{\pi}^{(\alpha)} = \{ \theta_s \}$ for some $\theta_s \in \interior \Theta$, Prop.~\ref{prop:generalConvResult}$(v)$ implies that $\theta_k \xrightarrow[k \rightarrow +\infty]{} \theta_s$.

We now refine the result of Proposition \ref{prop:generalConvResult} in the case $\alpha = 1$. In this case, the function $f_{\pi}^{(\alpha)}$ is also relatively strongly convex and coercive, two properties that are used to give stronger results, including rates of convergence to the global minimizer.

\begin{proposition}
    \label{prop:convResultAlpha1}
    Consider a sequence $\{ \theta_k \}_{k \in \mathbb{N}}$ generated by Algorithm \ref{alg:idealizedBPG} from $\theta_0 \in \interior \Theta$, with $\alpha = 1$ and step-sizes $\{ \tau_k \}_{k \in \mathbb{N}}$ in $(0,1]$. Consider the minimizer $\theta_*$ established in Proposition \ref{prop:ExistenceMinimizerAlpha1}. Under Assumptions \ref{assumption:expFamily}, \ref{assumption:wellPosednessPRMM}, and \ref{assumption:regularizer2},
    \begin{enumerate}
        \item[$(i)$] \color{blue}if $\sum_k \tau_k = +\infty$, the iterates converge to the solution, $\theta_k \xrightarrow[k \rightarrow +\infty]{} \theta_*$,\color{black}
        \item[$(ii)$] \color{blue}if $\tau_k \in [\epsilon, 1]$ for some $\epsilon > 0$, \color{black}we have 
        \begin{equation*}
            KL(q_{\theta_k}, q_{\theta_*}) \leq (1 - \epsilon)^k KL(q_{\theta_0}, q_{\theta_*}),\quad \forall k \in \mathbb{N},
        \end{equation*}
        \item[$(iii)$] \color{blue}if $\tau_k \in [\epsilon, 1]$ for some $\epsilon > 0$, \color{black} we have 
        \begin{equation*}
            F_{\pi}^{(1)}(\theta_k) - F_{\pi}^{(1)}(\theta_*) \leq \frac{(1 - \epsilon)^k}{\epsilon} KL(q_{\theta_0}, q_{\theta_*}),\quad \forall k \in \mathbb{N}.
        \end{equation*}
    \end{enumerate}
\end{proposition}

We now present a specialized result for the case $\alpha \in (0,1)$, $r \equiv 0$, under the assumption that $\pi = q_{\theta_{\pi}}$ for some $\theta_{\pi} \in \interior \Theta$. In this case, we are able to derive convergence results that are similar to the case $\alpha = 1$.

\begin{proposition}
\label{prop:convResultSameFamily}
    Consider a sequence $\{ \theta_k \}_{k \in \mathbb{N}}$ generated by Algorithm \ref{alg:idealizedBPG} from $\theta_0 \in \interior \Theta$, with $\alpha \in (0,1)$ and a sequence of step-sizes $\{ \tau_k \}_{k \in \mathbb{N}}$ in $[\epsilon,1]$ for some $\epsilon > 0$. Assume that the iterates stay in a non-empty compact set $C \subset \interior \Theta$, that $r \equiv 0$, that there exists $\theta_{\pi} \in \interior \Theta$ such that $\pi = q_{\theta_{\pi}}$, and that  Assumption \ref{assumption:expFamily} is satisfied. Then,
    \begin{enumerate}
        \item[$(i)$] $\theta_k \xrightarrow[k \rightarrow +\infty]{} \theta_{\pi}$\color{black}.
        \item[$(ii)$] then $RD_{\alpha}(\pi, q_{\theta_k}) \xrightarrow[k \rightarrow +\infty]{} 0$ and there exist constants $M > 0$ and $\delta \in (0,1)$ such that
        \begin{equation*}
            RD_{\alpha}(\pi, q_{\theta_{k}}) \leq M (1-\alpha\delta \epsilon)^k RD_{\alpha}(\pi, q_{\theta_0}),\quad \forall k \in \mathbb{N},
        \end{equation*}
        
    \end{enumerate}
\end{proposition}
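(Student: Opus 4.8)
The plan is to reduce the result to a \emph{local} relative strong convexity estimate around $\theta_{\pi}$, and then to reuse the contraction mechanism behind Proposition~\ref{prop:convResultAlpha1}. Since $r \equiv 0$, Algorithm~\ref{alg:relaxedMomentMatching} is a pure Bregman gradient descent, so that $\nabla A(\theta_{k+1}) = \nabla A(\theta_k) - \tau_{k+1} \nabla f_{\pi}^{(\alpha)}(\theta_k)$ for every $k$, and by Proposition~\ref{prop:targetSameFamily} Assumption~\ref{assumption:wellPosednessPRMM} holds, $\theta_{\pi}$ is the \emph{unique} stationary point, and $f_{\pi}^{(\alpha)}(\theta_{\pi}) = 0$. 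I would first prove the qualitative convergence in claim~(ii): the iterates lie in the compact set $C \subset \interior \Theta$, the step-sizes satisfy $\tau_k \in [\epsilon,1]$, and $r \equiv 0$ is continuous on $C$, so Proposition~\ref{prop:generalConvResult}(v) guarantees that every cluster point of $\{\theta_k\}_{k \in \mathbb{N}}$ lies in $S_{\pi}^{(\alpha)}$. By Proposition~\ref{prop:targetSameFamily} this set equals $\{\theta_{\pi}\}$; since $C$ is compact and all subsequential limits coincide with $\theta_{\pi}$, the whole sequence converges, $\theta_k \to \theta_{\pi}$.

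The heart of the argument is the local relative strong convexity. Evaluating the Hessian formula of Proposition~\ref{prop:gradient_f_pi_alpha} at $\theta_{\pi}$ and using $\pi = q_{\theta_{\pi}}$, which forces $\pi_{\theta_{\pi}}^{(\alpha)} = q_{\theta_{\pi}}$, the covariance term $\pi_{\theta_{\pi}}^{(\alpha)}(\Gamma \Gamma^\top) - \pi_{\theta_{\pi}}^{(\alpha)}(\Gamma)(\pi_{\theta_{\pi}}^{(\alpha)}(\Gamma))^\top$ reduces to $\nabla^2 A(\theta_{\pi})$, whence $\nabla^2 f_{\pi}^{(\alpha)}(\theta_{\pi}) = \alpha \nabla^2 A(\theta_{\pi})$. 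Because $f_{\pi}^{(\alpha)}$ is $\mathcal{C}^2$ on $\interior \Theta$ (Proposition~\ref{prop:gradient_f_pi_alpha}) and $\nabla^2 A$ is continuous and positive definite by minimality, for any fixed $\delta \in (0,1)$ there is a convex neighborhood $\mathcal{N}_{\delta} \subset \interior \Theta$ of $\theta_{\pi}$ on which $\nabla^2 f_{\pi}^{(\alpha)}(\theta) \succeq \alpha\delta\, \nabla^2 A(\theta)$. Integrating this pointwise bound along segments inside $\mathcal{N}_{\delta}$ shows that $f_{\pi}^{(\alpha)}$ is $\alpha\delta$-relatively strongly convex with respect to $A$ on $\mathcal{N}_{\delta}$ in the sense of Definition~\ref{def:relativeSmoothnessConvexity}; it remains $1$-relatively smooth everywhere by Proposition~\ref{prop:relativeProperties}.

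I would then replay the contraction underlying Proposition~\ref{prop:convResultAlpha1} with the constant $1$ replaced by $\alpha\delta$. Writing $\theta_* = \theta_{\pi}$, the standard three-point identity for Bregman divergences together with the update rule gives the exact relation $d_A(\theta_*,\theta_{k+1}) = d_A(\theta_*,\theta_k) + d_A(\theta_k,\theta_{k+1}) + \tau_{k+1}\langle \nabla f_{\pi}^{(\alpha)}(\theta_k), \theta_* - \theta_k\rangle$. Bounding the inner product by the $\alpha\delta$-relative strong convexity inequality and then using the relative smoothness descent inequality $d_A(\theta_k,\theta_{k+1}) \leq \tau_{k+1}\big(f_{\pi}^{(\alpha)}(\theta_k) - f_{\pi}^{(\alpha)}(\theta_{k+1})\big)$, all function-value terms telescope and cancel thanks to $f_{\pi}^{(\alpha)}(\theta_*) = 0 \leq f_{\pi}^{(\alpha)}(\theta_{k+1})$, leaving the one-step contraction $d_A(\theta_*,\theta_{k+1}) \leq (1 - \alpha\delta\tau_{k+1})\, d_A(\theta_*,\theta_k) \leq (1 - \alpha\delta\epsilon)\, d_A(\theta_*,\theta_k)$, valid whenever $\theta_k \in \mathcal{N}_{\delta}$; here $\alpha\delta\epsilon \in (0,1)$ and $d_A(\theta_*,\theta_k) = KL(q_{\theta_{\pi}}, q_{\theta_k})$ by Proposition~\ref{prop:KLisBregman}.

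It remains to translate this into the stated bound on $RD_{\alpha}(\pi, q_{\theta_k}) = f_{\pi}^{(\alpha)}(\theta_k)$. As $\theta_k \to \theta_{\pi}$, there is $k_0$ with $\theta_k \in \mathcal{N}_{\delta}$ for all $k \geq k_0$, so iterating the contraction yields $KL(q_{\theta_{\pi}}, q_{\theta_k}) \leq (1-\alpha\delta\epsilon)^{k-k_0} KL(q_{\theta_{\pi}}, q_{\theta_{k_0}})$. The quadratic expansion of Proposition~\ref{prop:PolyakInequality}(i) for $f_{\pi}^{(\alpha)}$ and the analogous expansion of $d_A(\theta_{\pi},\cdot) = KL(q_{\theta_{\pi}}, q_{\cdot})$ around $\theta_{\pi}$ are both equivalent to $\tfrac12\|\theta-\theta_{\pi}\|^2_{\nabla^2 A(\theta_{\pi})}$ up to the factor $\alpha$, so on $\mathcal{N}_{\delta}$ one has $f_{\pi}^{(\alpha)}(\theta_k) \leq C_1\, KL(q_{\theta_{\pi}}, q_{\theta_k})$ for some $C_1 > 0$, giving geometric decay of $RD_{\alpha}(\pi, q_{\theta_k})$ for $k \geq k_0$ and in particular $RD_{\alpha}(\pi, q_{\theta_k}) \to 0$. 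Absorbing the finitely many initial iterates --- bounded because $f_{\pi}^{(\alpha)}(\theta_k)$ is non-increasing by Proposition~\ref{prop:generalConvResult}(i) --- and the ratio $1/RD_{\alpha}(\pi, q_{\theta_0})$ into a single constant $C$ then produces claim~(i), while claim~(ii) was already obtained. I expect the local relative strong convexity of the second paragraph to be the main obstacle: far from $\theta_{\pi}$ the covariance of $\pi_{\theta}^{(\alpha)}$ may dominate $\nabla^2 A(\theta)$, so no \emph{global} constant $\rho > 0$ is available, and the entire scheme relies on first securing $\theta_k \to \theta_{\pi}$ so that the iterates eventually enter the region where the constant $\alpha\delta$ is legitimate.
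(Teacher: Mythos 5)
Your argument is correct, but it takes a genuinely different route from the paper's. The paper's proof combines the one-step descent inequality $f_{\pi}^{(\alpha)}(\theta_{k+1}) \leq f_{\pi}^{(\alpha)}(\theta_k) - \tfrac{1}{\tau_{k+1}} d_A(\theta_k,\theta_{k+1}) = f_{\pi}^{(\alpha)}(\theta_k) - \tfrac{1}{\tau_{k+1}} d_{A^*}(\nabla A(\theta_{k+1}),\nabla A(\theta_k))$ directly with the local Polyak--\L ojasiewicz inequality of Proposition~\ref{prop:PolyakInequality}(ii), which immediately yields the contraction $f_{\pi}^{(\alpha)}(\theta_{k+1}) \leq (1-\alpha\tau_{k+1})f_{\pi}^{(\alpha)}(\theta_k) + o(\upsilon^2)$ on the function values themselves; the constant $\delta$ only serves to absorb the $o(\upsilon^2)$ remainder. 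You instead establish a \emph{local relative strong convexity} constant $\alpha\delta$ from the Hessian identity $\nabla^2 f_{\pi}^{(\alpha)}(\theta_{\pi}) = \alpha\nabla^2 A(\theta_{\pi})$ (which is correct, since $\pi_{\theta_{\pi}}^{(\alpha)} = q_{\theta_{\pi}}$ and the covariance of $\Gamma$ under $q_{\theta_{\pi}}$ equals $\nabla^2 A(\theta_{\pi})$), replay the three-point contraction of Proposition~\ref{prop:convResultAlpha1} to get geometric decay of $d_A(\theta_{\pi},\theta_k)$, and only then transfer to $f_{\pi}^{(\alpha)}(\theta_k)$ via the mutual quadratic equivalence near $\theta_{\pi}$. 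Your route is slightly longer because of that extra transfer step, but it buys a byproduct the paper's proof does not state: a geometric contraction of $KL(q_{\theta_k},q_{\theta_{\pi}})$ itself, in the spirit of Proposition~\ref{prop:convResultAlpha1}(i). Both proofs share the same skeleton for part (ii) (Proposition~\ref{prop:generalConvResult}(v) plus uniqueness of the stationary point from Proposition~\ref{prop:targetSameFamily}) and both handle the asymptotic remainders at the same informal level of rigor. One small slip: by Proposition~\ref{prop:KLisBregman}, $d_A(\theta_{\pi},\theta_k) = KL(q_{\theta_k},q_{\theta_{\pi}})$, not $KL(q_{\theta_{\pi}},q_{\theta_k})$; this does not affect your argument since the final comparison is made through the symmetric leading quadratic term.
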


We now turn to the study of Algorithm \ref{alg:MCrelaxedMomentMatching}, which is a sampling-based counterpart to Algorithm \ref{alg:idealizedBPG}. We first present our assumption on the mean square error introduced by our sampling procedure.

\begin{assumption}
    \label{assumption:bias}
    There exists a function $E^{(\alpha)}_{\pi,\mathcal{Q}} : \interior \Theta \rightarrow \mathbb{R}$ that is locally bounded and such that for any $\theta \in \interior \Theta$ and any $N \in \mathbb{N} \setminus \{0\}$,
    \begin{equation*}
        \mathbb{E} \left[ \left\| \pi_{\theta}^{(\alpha)}(\Gamma) - \sum_{l=1}^N \bar{w}_l^{(\alpha)}\Gamma(x_l) \right\|^2 \right] \leq \frac{1}{N} E^{(\alpha)}_{\pi,\mathcal{Q}}(\theta).
    \end{equation*}
\end{assumption}
Such type of control on the mean square error of sampling-based estimators is reminiscent of the importance sampling bounds given by \cite{agapiou2015}. It is possible to show that Assumption \ref{assumption:bias} is satisfied from more elementary assumptions on relevant moments using the analysis from \cite{doukhan2009}.

\begin{proposition}
    \label{prop:cvgceStochasticGeneral}
    Suppose that Assumptions \ref{assumption:expFamily}, \ref{assumption:wellPosednessPRMM}, \ref{assumption:regularizer2}, and \ref{assumption:bias} hold and take $\alpha \in (0,1]$. Consider a sequence $\{ \theta_k \}_{k \in \mathbb{N}}$ generated by Algorithm \ref{alg:MCrelaxedMomentMatching} from $\theta_0 \in \interior \Theta$ with step-sizes $\{\tau_k\}_{k \in \mathbb{N}}$ in $(0,1]$ and sample sizes satisfying $\sum_{k \geq 0} \frac{1}{\sqrt{N_{k+1}}} < +\infty$. If there exists a non-empty compact set $C \subset \interior \Theta$ containing the iterates with probability one, we have
    \begin{itemize}
        \item[$(i)$] $\mathbb{E}\left[ \sum_{k \geq 0} KL(q_{\theta_{k+1}}, q_{\theta_k}) \right] < +\infty$ and $KL(q_{\theta_{k+1}}, q_{\theta_k}) \xrightarrow[k \rightarrow +\infty]{\mathbb{P}} 0$,
        \item[$(ii)$] if $\tau_k \in [\epsilon, 1]$ for some $\epsilon > 0$, then there exists a sequence $\{ \rho_k \}_{k \in \mathbb{N}}$ such that $\rho_k \in \nabla f_{\pi}^{(\alpha)}(\theta_k) + \partial r(\theta_k)$ for every $k \in \mathbb{N}$ and $\rho_k \xrightarrow[k \rightarrow +\infty]{\mathbb{P}} 0$. 
    \end{itemize}
\end{proposition}

We now give a second convergence result for Algorithm \ref{alg:MCrelaxedMomentMatching} in the particular case of $\alpha = 1$. This result gives convergence rates in terms of expectation of the KL divergence between the iterates and the optimal proposal. This result involves a intricate interplay between the step sizes and the sample sizes.

\begin{proposition}
    \label{prop:cvgceSotchasticExpectationAlpha1}
    Suppose that Assumptions \ref{assumption:expFamily}, \ref{assumption:wellPosednessPRMM}, \ref{assumption:regularizer2}, and \ref{assumption:bias} hold and take $\alpha = 1$. Consider the minimizer $\theta_*$ established in Proposition \ref{prop:ExistenceMinimizerAlpha1}. Consider a sequence $\{ \theta_k \}_{k \in \mathbb{N}}$ generated by Algorithm \ref{alg:MCrelaxedMomentMatching} from $\theta_0 \in \interior \Theta$ with step-sizes $\{\tau_k\}_{k \in \mathbb{N}}$ in $(0,1]$ and sample sizes $\{ N_k \}_{k \in \mathbb{N}}$. Suppose that there exists a non-empty compact set $C \subset \interior \Theta$ containing the iterates with probability one, then we have the following results.
    \begin{itemize}
        \item[$(i)$] There exists a constant $M > 0$ such that 
        \begin{equation*}
            \mathbb{E} \left[KL(q_{\theta_k}, q_{\theta_*}) \right] \leq \left( \prod_{l=0}^k (1-\tau_{l+1}) \right)KL(q_{\theta_0}, q_{\theta_*}) + \left( \sum_{l=0}^k \frac{\tau_{l+1}}{\sqrt{N_{l+1}}} \prod_{m=l+1}^k(1-\tau_{m+1}) \right) M.
        \end{equation*}
        \item[$(ii)$] If we have $\sum_{k \geq 0} \tau_k = +\infty$ and $\sum_{k \geq 0} \frac{1}{\sqrt{N_k}} < +\infty$, then $\theta_k \xrightarrow[k \rightarrow +\infty]{a.s.} \theta_*$.
    \end{itemize} 
\end{proposition}

\begin{remark}
    If $\tau_k \equiv \tau \in (0,1]$ and $N_k \equiv N \in \mathbb{N} \setminus \{0\}$, then Proposition \ref{prop:cvgceSotchasticExpectationAlpha1} gives
    \begin{equation}
        \mathbb{E} \left[KL(q_{\theta_k}, q_{\theta_*}) \right] \leq (1  - \tau)^{k+1} \left( KL(q_{\theta_0}, q_{\theta_*}) - \frac{M}{\sqrt{N}} \right) + \frac{M}{\sqrt{N}},
    \end{equation}
    meaning that the iterates will get to a neighborhood of the solution whose size is asymptotically controlled by $\frac{M}{\sqrt{N}}$. 
\end{remark}

\subsection{Discussion}
\label{ssec:43}

We first discuss our assumptions, and why they are weaker compared to existing analyses of similar schemes. Generally, we avoided any assumption that would not be satisfied by the one-dimensional Gaussian target described in Example \ref{example:counterexampleGaussian1d}. Therefore, we are facing a situation where $\nabla f_{\pi}^{(\alpha)}$ is not Lipschitz, $A$ is not strongly convex, and $\domain A$ is not closed, which contrasts with common assumptions from the literature on optimization schemes based on Bregman divergences \citep{bauschke2016, teboulle2018, bolte2018, gao2020, hanzely2021} or in the statistical literature \citep{akyildiz2021, khan2016, li2016, bungert2022}. Note that the Euclidean smoothness of $KL(q_{\cdot}, \pi)$ is proven for instance by \cite{lambert2022, kim2023, domke2023} under a log-smoothness assumption on the target for Gaussian or location-scale approximating families. Standard VI works minimizing the exclusive KL divergence benefit from unbiased estimators of the gradients \citep{kim2023, domke2023}. In contrast, Algorithm \ref{alg:MCrelaxedMomentMatching} only yields biased gradient estimates, adding another challenge to the analysis. Although this setting has been studied in the Euclidean case \citep{tadic2017, atchade2017, akyildiz2021, dieuleveut2023}, we are only aware of the work of \cite{gruffaz2024} in a non-Euclidean setting. Namely, \cite{gruffaz2024} covers expectation-maximization algorithm where the noise comes from using a MCMC algorithm. 

Proposition \ref{prop:generalConvResult} implies a monotonic decrease of $F_{\pi}^{(\alpha)}$ along iterations of Algorithm \ref{alg:idealizedBPG}. This kind of result appears in many statistical procedures \citep{Douc07a, daudel2021monotonic}. Note that these works encompass more general approximating families than our study, but do not consider our additional regularization term $r$. In our setting, we are moreover able to give novel and more precise results on the convergence of the sequence of iterates. The result of Proposition \ref{prop:generalConvResult} $(iii)$, which is a type of \emph{finite length} property of the sequence of iterates, is not common for a statistical procedure, to our knowledge. This result can be used to practically assess the convergence of our algorithms as the condition $KL(q_{\theta_k}, q_{\theta_{k+1}}) \leq \varepsilon$ can be employed as a stopping criterion in Algorithms \ref{alg:idealizedBPG} and \ref{alg:MCrelaxedMomentMatching}. Proposition \ref{prop:generalConvResult} $(iv)$ provides estimates on the number of iterations needed to reach a certain level of stationarity between iterates, while Proposition \ref{prop:generalConvResult} $(v)$ establishes convergence to the set of stationary points.

We are also able to show the geometric rate of convergence of iterates of Algorithm \ref{alg:idealizedBPG} to the global minimizer of Problem \eqref{eq:pblmPropAdapt} when $\alpha = 1$ in Proposition \ref{prop:convResultAlpha1} and when $\pi \in \mathcal{Q}$ and $r \equiv 0$ for $\alpha < 1$ in Proposition \ref{prop:convResultSameFamily}. Note that the result for $\alpha = 1$ is established under minimal assumptions on $\pi$ as we only need $\pi(\Gamma)$ to be well-defined (see Assumption \ref{assumption:wellPosednessPRMM}). In comparison, similar rates of convergence are established in the case of the objective function $KL(\cdot, \pi)$ in \citep{lambert2022, yao2022} under strong log-concavity and log-smoothness assumptions on $\pi$. We are not aware of any VI algorithm achieving geometric rates in the case of the Rényi divergence. Let us however mention the geometric convergence of the probability distribution of the samples to the minimizer of $RD_{\alpha}(\cdot, \pi)$ for $\alpha \geq 1$ that is proven by \cite{vempala2019} for MCMC under log-smoothness assumption on the target and a weaker version of log-concavity. It is difficult to compare the assumption $\pi \in \mathcal{Q}$ used in Proposition \ref{prop:convResultSameFamily} with log-concavity or log-smoothness assumptions, as some exponential families might have multi-modal members or can be written over a discrete space $\mathcal{X}$.

In the case of Algorithm \ref{alg:MCrelaxedMomentMatching} for $\alpha \in (0,1]$, Proposition \ref{prop:cvgceStochasticGeneral} extends the finite-length property of the sequence of iterates to this stochastic setting and establishes the convergence to zero of a sequence of gradient or subgradients. When $\alpha = 1$, we give in Proposition \ref{prop:cvgceSotchasticExpectationAlpha1} an explicit rate of convergence to the minimizer in terms of the step sizes and sample sizes and exhibit a case where the iterates almost surely converge to the minimizer. When $\alpha \in (0,1]$, the problem is non-convex with biased stochastic gradients, which is a very challenging setting. Since our analysis mostly leverages standard tools from the study of Bregman proximal gradient algorithms, it can probably be extended to more general settings. In the case $\alpha = 1$, the problem becomes relatively strongly convex, without any log-concavity assumption on the target. Proposition \ref{prop:cvgceSotchasticExpectationAlpha1} $(i)$ is a KL analogue to the Wasserstein-based \cite[Theorem 4]{lambert2022}, although the latter work keeps the sample size per iteration constant. Proposition \ref{prop:cvgceStochasticGeneral} $(ii)$ generalizes \cite[Theorem 3.2 (ii)]{marin2019}, which establishes the convergence of a simplified version of the AMIS algorithm of \cite{CORNUET12}, to case where we allow step sizes lower than one and where self-normalized importance sampling is used instead of unnormalized importance sampling. Note that we need to assume that the iterates stay bounded and away from the boundary of the domain of $A$. Boundedness of stochastic trajectory is often assumed, for instance in the framework of the ODE method \citep{benaim1999}, while the behavior of the iterates of Bregman proximal gradient methods near the boundary of the domain of $A$ (when it is not closed) is usually not addressed \citep{bauschke2016, teboulle2018}.

Our convergence analysis is restricted to $\alpha \in (0,1]$, which is also the case in the analysis of \cite{daudel2021monotonic}, considering the minimization of the $\alpha$-divergence $D_{\alpha}$ over wider families. The convergence proof techniques used in this work actually share some common points with ours. In particular, because of the $1$-relative smoothness of $f_{\pi}^{(\alpha)}$ with respect to $A$, we have from Definition \ref{def:relativeSmoothnessConvexity} that
\begin{equation}
    \label{eq:discussionDescentLemma}
    f_{\pi}^{(\alpha)}(\theta) - f_{\pi}^{(\alpha)}(\theta') \leq \langle q_{\theta'}(\Gamma) - \pi_{\theta'}^{(\alpha)}(\Gamma), \theta - \theta' \rangle + KL(q_{\theta'}, q_{\theta}).
\end{equation}
This is to be compared with \cite[Proposition 1]{daudel2021monotonic}, which, in our setting, would read
\begin{equation}
    \label{eq:discussionDaudelDescent}
    \Psi_{\pi}^{(\alpha)}(\theta) - \Psi_{\pi}^{(\alpha)}(\theta') \leq -\frac{1}{\alpha}\int \pi(x)^{\alpha}q_{\theta'}(x)^{1-\alpha} \log \left( \frac{q_{\theta}(x)}{q_{\theta'}(x)}\right) \nu(dx).
\end{equation}
Note that here, $q_{\theta}, q_{\theta'}$ are not necessarily from an exponential family and that we used $\Psi_{\pi}^{(\alpha)}(\theta) = D_{\alpha}(\pi, q_{\theta})$, while $D_{\alpha}(q_{\theta}, \pi)$ was considered by \cite{daudel2021monotonic} (this does not affect the results as $D_{\alpha}(\pi, q_{\theta}) = D_{1 - \alpha}(q_{\theta}, \pi)$ for $\alpha \in [0,1]$). When $q_{\theta}$ and $q_{\theta'}$ are in an exponential family $\mathcal{Q}$, Eq. \eqref{eq:discussionDaudelDescent} can be further rewritten as
\begin{equation}
    \label{eq:discussionDaudelDescentExp}
    \Psi_{\pi}^{(\alpha)}(\theta) - \Psi_{\pi}^{(\alpha)}(\theta') \leq \frac{Z_{\pi_{\theta'}^{(\alpha)}}}{\alpha} \left( \langle q_{\theta'}(\Gamma) - \pi_{\theta'}^{(\alpha)}(\Gamma), \theta - \theta' \rangle + KL(q_{\theta'}, q_{\theta}) \right),
\end{equation}
with $Z_{\pi_{\theta'}^{(\alpha)}} = \int \pi(x)^{\alpha}q_{\theta'}(x)^{1 - \alpha} \nu(dx)$. We recognize now that the right-hand side of Eq.~\eqref{eq:discussionDaudelDescentExp} is equal to the one of \eqref{eq:discussionDescentLemma} up to a positive multiplicative constant. Even if \cite[Proposition 1]{daudel2021monotonic} is derived directly without using Bregman divergences, our analysis gives a geometric interpretation to it.  Moreover, our interpretation allows to use the modern Bregman proximal gradient machinery, allowing to prove convergence results that are more precise while including the additional regularization term $r$ and possible bias. Indeed, the convergence result in \citep{daudel2021monotonic} only shows a monotonic decrease of the objective without regularization in the deterministic case, although in wider variational families.

\section{Numerical experiments}
\label{sec:numerical}

In this section, we investigate the performance of our methods through numerical simulations in a black-box setting and compare them with existing algorithms. We focus our study on Algorithm \ref{alg:MCrelaxedMomentMatching}, that we call the \emph{relaxed moment-matching (RMM)} algorithm when $r \equiv 0$ and the \emph{proximal relaxed moment-matching (PRMM)} otherwise. \color{blue}Note that Algorithm \ref{alg:idealizedBPG} is an idealized algorithm and cannot be implemented in general.~\color{black} We also consider the VRB algorithm from \cite{li2016}, whose implementation for an exponential family is described by Equation \eqref{eq:vrbUpdate}. It is shown in Section \ref{ssec:comparisonExistingMethods} that the VRB algorithm can be interpreted as an Euclidean version of our novel RMM algorithm. However, when $\alpha \in (0,1]$, $f_{\pi}^{(\alpha)}$ is not smooth relatively to the Euclidean distance (see Proposition \ref{prop:counterexample}) while it is smooth relatively to the Bregman divergence $d_A$ (see Proposition \ref{prop:relativeProperties}). Therefore, the comparison between the RMM and PRMM algorithms with the VRB method might allow to assess the use of the Bregman divergence instead of the Euclidean distance on a numerical basis. We also use this comparison to assess the role of the regularizer, which is a feature of our approach, but not of the one of \cite{li2016}. 

Additional numerical experiments are presented in Appendix \ref{section:suppNumericalExperiments}. In particular, the influence of the parameters $\alpha$ and $\tau$ and of the regularizer $r$ is studied in Appendix \ref{section:suppNumericalExperiments}.1 using a Gaussian toy example. In Appendix \ref{section:suppNumericalExperiments}.2, we provide additional comparison between the RMM and the VRB algorithms. We now turn to a Bayesian regression task, which allows us to compare the RMM, PRMM and VRB algorithms on a realistic problem and understand better the interest of using the Bregman geometry. We also use this example to show how our PRMM algorithm allows to compensate for a misspecified prior by adding a regularizer.

We consider a regression problem where we approximate the posterior distribution of a regression parameter $\beta \in \mathbb{R}^d$. We observe $J$ measurements $y_T^{(j)} \in \mathbb{R}^{d_y}, y_0^{(j)} \in \mathbb{R}^{d_y}$, where $T > 0$ and $j \in \llbracket 1, J \rrbracket$. We set $d_y = 2$ and $d = 6$. We introduce the flow $\Phi_{\beta}^T : \mathbb{R}^{d_y} \rightarrow \mathbb{R}^{d_y}$ such that, for any $\xi_0 \in \mathbb{R}^{d_y}$, $\Phi_{\beta}^T(\xi_0) = \xi(T)$ where $\xi(\cdot)$ is the solution of 
\begin{equation*}
    \begin{cases}
        \dot \xi_1 &= \beta_1 \xi_1 + \beta_3 \xi_1^2 + \beta_5 \xi_1 \xi_2,\\
        \dot \xi_2 &= \beta_2 \xi_2 + \beta_4 \xi_2^2 + \beta_6 \xi_1 \xi_2,\\
        \xi(0) &= \xi_0.
    \end{cases}
\end{equation*}
Then, for every $j \in \llbracket 1,J \rrbracket$, we have $y_T^{(j)} = \Phi_{\beta}^T(y_0^{(j)}) + n^{(j)}$ where $n^{(j)} \sim \mathcal{N}(0, \sigma^2)$. We thus have that
\begin{equation*}
    p(y | \beta) = \prod_{j=1}^J\mathcal{N}\left(y_T^{(j)} ; \Phi_{\beta}^T(y_0^{(j)}), \sigma^2\right).
\end{equation*}
Note that the above likelihood cannot be easily differentiated with respect to $\beta$, even using auto-differentiation. This is due to the need to integrate the dynamics over $[0,T]$. This kind of task arises for instance in ecology \citep{knappe2012}.

We generate synthetic data from a sparse vector $\bar\beta$ being sampled such that
\begin{equation*}
    \bar \beta \sim \prod_{i=1}^d \big( \rho \delta_0(d\beta_i) + (1-\rho) \mathcal{L}(\beta_i ; 0, \lambda_1)d\beta_i \big).
\end{equation*}
The above distribution is a spike-and-slab prior, using a Dirac mass at zero (the spike) and a Laplace distribution (the slab). This prior is hard to deal with, in general, as it requires model-specific derivations \citep{ray2022}. Furthermore, the latter methods minimize the exclusive KL divergence and we are not aware of any work that tries to approximate a spike-and-slab posterior by minimizing a Rényi divergence. We will thus assume the so-called LASSO spike-and-slab prior \citep{bai2021}, which has a continuous density defined as 
\begin{equation*}
    p_0(\beta) = \prod_{i=1}^d \big( \rho \mathcal{L}(\beta_i ; 0, \lambda_0) + (1-\rho) \mathcal{L}(\beta_i ; 0, \lambda_1) \big),
\end{equation*}
with $\lambda_0 \ll \lambda_1$. Now, the Dirac mass is replaced by a very spiky Laplace distribution as well.

We compare the RMM, the PRMM, and the VRB algorithm with target $\pi(\beta) \propto p(y | \beta) p_0(\beta)$. All the algorithms are run considering an approximating exponential family $\mathcal{Q}$, chosen as the family of Gaussian distributions with diagonal covariance matrices. The parametrization of this family is detailed in Appendix \ref{section:appendixProxComputations}. For the PRMM algorithm, we use the regularizer $r(\theta) = \eta  \| \theta_1\|_1$ with $\eta \geq 0$. This can be understood as the Lagrangian relaxation \citep{hiriart1993} with multiplier $\eta \geq 0$ of the constraint $\sum_{i=1}^d \| \theta_1\|_1 \leq c$, for some $c \geq 0$ such that the constrained set is non empty. Our $\ell_1$-like regularizer enforces sparsity on all the components of the mean $\mu$, except the component $\mu_0$. The idea is to mimic the sparse structure of $\overline{\beta}$ that was simulated from $p_0$. The computation of the corresponding Bregman proximal operator for this choice of regularizer is detailed in Appendix \ref{section:appendixProxComputations}.

In order to assess the performance of the algorithms, we track the variational Rényi bound, defined Eq.~\eqref{eq:RényiBound}, that is estimated at each iteration $k \in \mathbb{N}$ through
\begin{equation}
    \label{eq:RényiBoundApprox}
    \mathcal{L}_{\pi}^{(\alpha)}(\theta_k) \approx \frac{1}{\alpha} \log \left( \frac{1}{N_{k+1}} \sum_{l=1}^{N_{k+1}} w_l^{(\alpha)} \right).
\end{equation}
We also consider the F1 score that each algorithm achieves in the prediction of the zeros of the true regression vector $\overline{\beta}$. It is computed at each iteration $k \in \mathbb{N}$, by seeing how the zeros of $\mu_k$ match those of $\overline{\beta}$. Additionally, since the algorithms provide an approximation of the full target $\pi$, we also test the quality of the distributional approximation by sampling regression vectors $\beta$ from the final proposal $q_{\theta}$, and averaging their absolute error over a test data set $\{(z_0^{(j)}, z_T^{(j)})\}_{j=1}^{J^{\text{test}}}$. This is done by computing the following mean absolute error (MAE):
\begin{equation}
    \text{MAE}^{\text{test}}(\theta) = \frac{1}{N^{\text{test}}} \sum_{i=1}^{N^{\text{test}}} \sum_{j=1}^{J^{\text{test}}} \left| z_T^{(j)} - \Phi_{\beta^{(i)}}^T(z_0^{(j)}) \right|, \text{ with } \beta^{(i)} \sim q_{\theta},\,\forall i \in \llbracket 1, N^{\text{test}} \rrbracket.
\end{equation}
We compute this quantity for the final proposal $q_{\theta_k}$ in each run and analyze the distributions of the obtained values. This gives a sense of the quality of the approximations $q_{\theta_K}$ in terms of both location and scale, and of the robustness of the algorithm.

We run the algorithms for $K = 100$ iterations, with $N = 500$ samples per iteration. The PRMM and RMM algorithms have been implemented with constant step size $\tau = 10^{-1}$, while the VRB algorithm uses a constant step size $\tau = 10^{-4}$. These choices correspond to the most favorable step-size rule, for each algorithm, as indicated by our extended experiments in Appendix \ref{section:suppNumericalExperiments}. For the PRMM algorithm, we use the regularization parameter $\eta = 0.5$. We have set up the experiment, with $J = 100$, $J^{\text{test}} = 50$, $\rho = 0.3$, $\lambda_0 = 1$, $\lambda_1 = 20$, and $\sigma^2 = 0.5$. We perform $N^{\text{test}} = 50$ tests. We discarded one run of the VRB algorithm for $\alpha = 1$ for which iterates had become singular. We now present and discuss the result of our experiments.

\begin{figure}[htb]
    \centering
    \begin{subfigure}[b]{0.48\textwidth}
        \includegraphics[width = \textwidth]{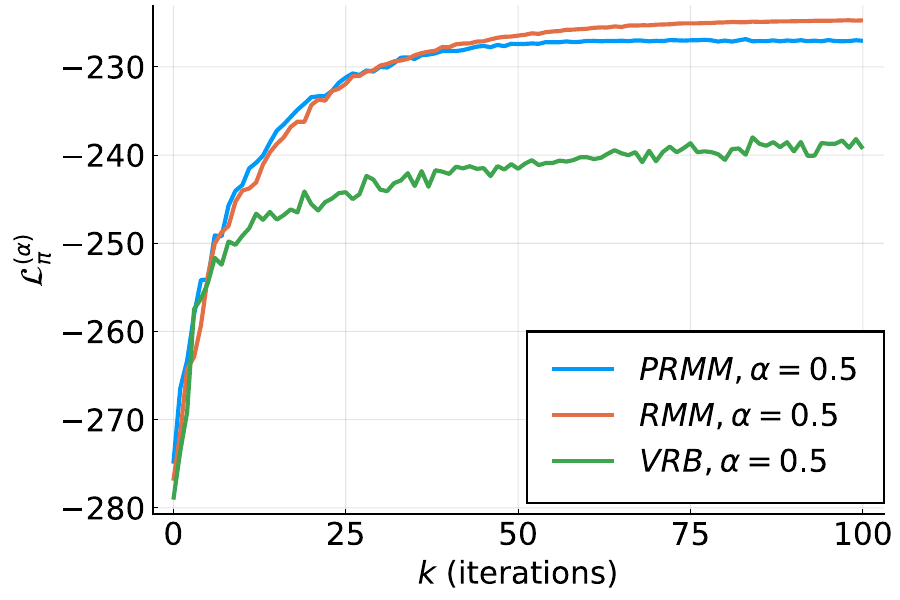}
        \caption{$\alpha  = 0.5$}
    \end{subfigure}  
    \hfill
    \begin{subfigure}[b]{0.48\textwidth}
        \includegraphics[width = \textwidth]{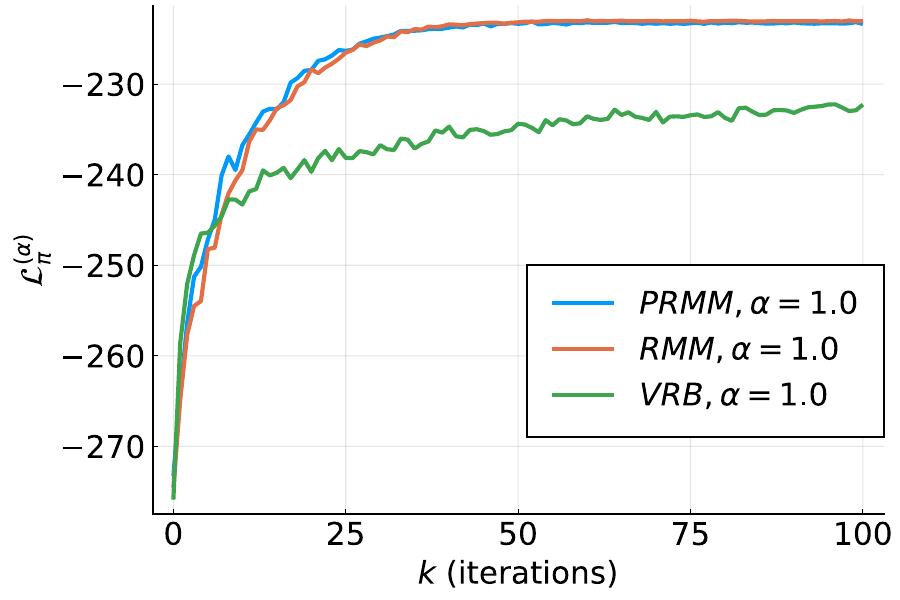}
        \caption{$\alpha = 1.0$}
    \end{subfigure}  
    \caption{Approximated Rényi bound, averaged over $100$ runs with $N=500$ samples per iteration.}
    \label{fig:RényiBoundevolutionSparseGaussianPrior}
\end{figure}
Figure \ref{fig:RényiBoundevolutionSparseGaussianPrior} shows the increase of the approximated variational Rényi bound described in Eq.~\eqref{eq:RényiBoundApprox}. As discussed in Section \ref{ssec:comparisonExistingMethods}, an increase in the Rényi bound $\mathcal{L}_{\pi}^{(\alpha)}(\theta)$ shows a decrease in the Rényi divergence $RD_{\alpha}(\pi, q_{\theta})$, so these plots show that the three method decrease the Rényi divergence. However, our methods are able to reach higher values than the VRB method at a faster rate, illustrating the improvement coming by using the Bregman geometry rather than the Euclidean one.

\begin{figure}[htb]
    \centering
    \begin{subfigure}[b]{0.48\textwidth}
        \includegraphics[width = \textwidth]{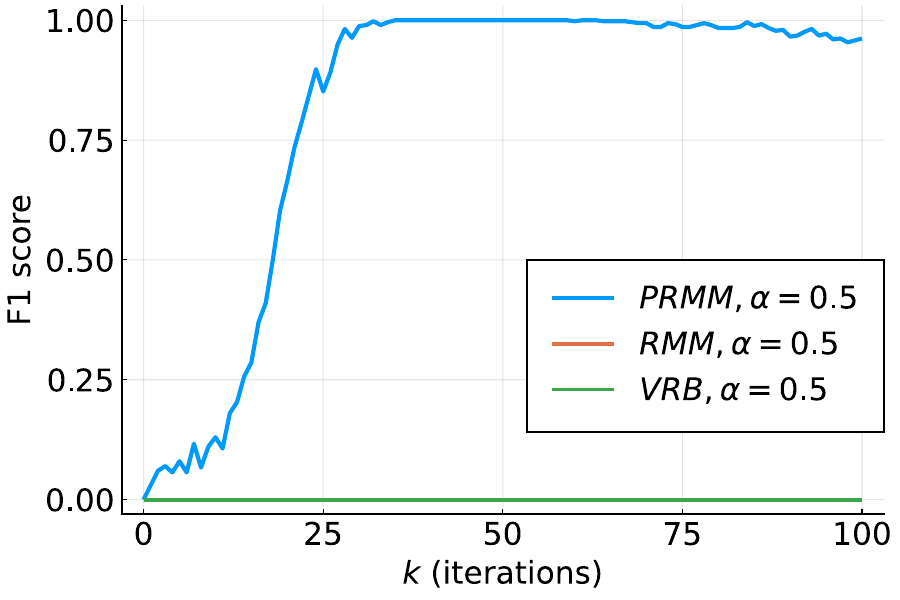}
        \caption{$\alpha  = 0.5$}
    \end{subfigure}  
    \hfill
    \begin{subfigure}[b]{0.48\textwidth}
        \includegraphics[width = \textwidth]{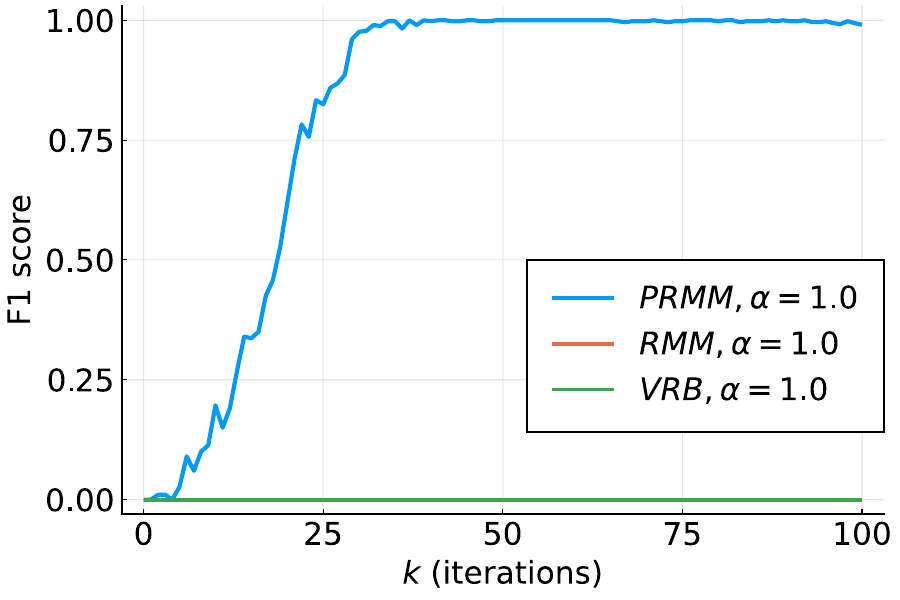}
        \caption{$\alpha = 1.0$}
    \end{subfigure}  
    \caption{F1 score in the prediction of the zeros of $\bar{\beta}$ by the zeros of $\{ \mu_k \}_{k=0}^K$, averaged over $100$ runs with $N=500$ samples per iteration.}
    \label{fig:F1evolutionSparseGaussianPrior}
\end{figure}
Figure \ref{fig:F1evolutionSparseGaussianPrior} shows the F1 score achieved by each algorithm in the retrieval of the zeros of the true regression vector. The RMM and VRB algorithms are not able to recover any zeros, showing using only a spike-and-slab LASSO prior is not enough to get approximations with sparse means. The PRMM algorithms uses a proximity operator to recover the zeros of the sought regression vector. This additional operator leads to a very good recovery of the zeros of the regression vector. Note that the regularizing term is multiplied by a scalar value $\eta > 0$ that controls the strength of the regularization relative to the Rényi divergence term. Setting this value to achieve the best recovery while preserving good approximating properties is difficult a priori, as it is the case for instance in maximum a posteriori estimation.

\begin{figure}[htb]
    \centering
    \begin{subfigure}[b]{0.48\textwidth}
        \includegraphics[width = \textwidth]{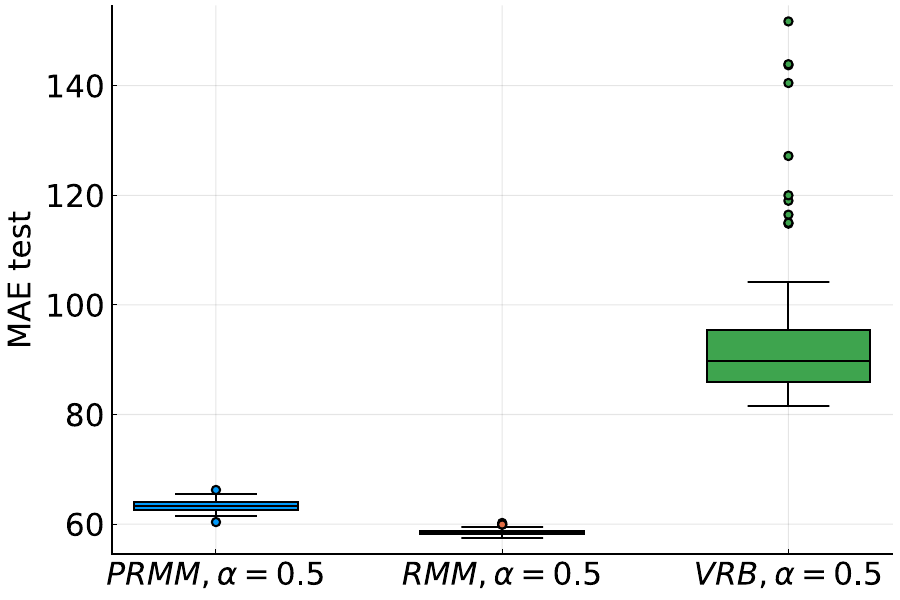}
        \caption{$\alpha  = 0.5$}
    \end{subfigure}  
    \hfill
    \begin{subfigure}[b]{0.48\textwidth}
        \includegraphics[width = \textwidth]{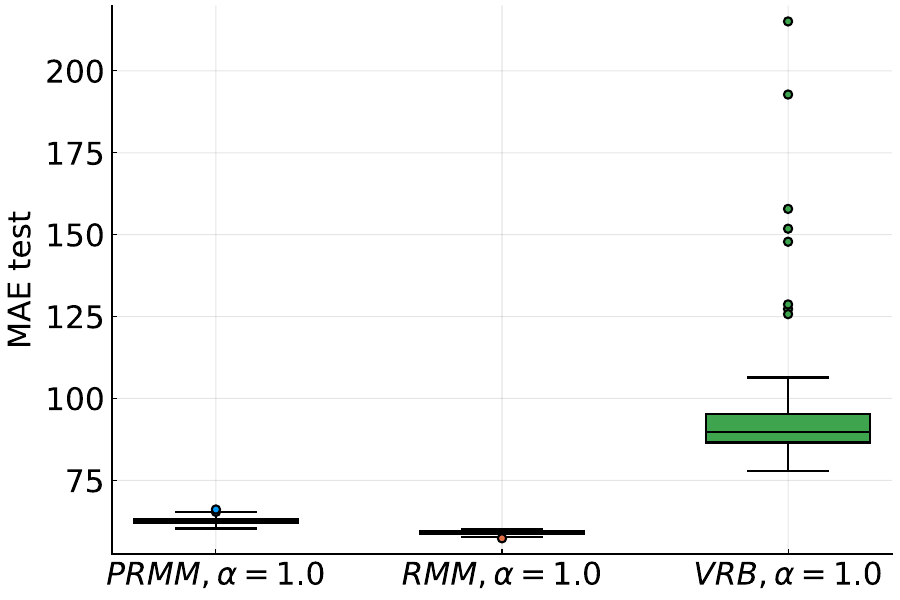}
        \caption{$\alpha = 1.0$}
    \end{subfigure}  
    \caption{Box plots of the values $\text{MAE}^{\text{test}}(\theta_K)$ obtained over $100$ runs with $N = 100$ samples per iteration and $K = 100$ iterations, showing the reconstruction error on the test data.}
    \label{fig:TestMAESparseGaussianPrior}
\end{figure}
The box plots of Fig.~\ref{fig:TestMAESparseGaussianPrior} assess the quality of the variational approximation of the posterior obtained by each method, by evaluating how regression vectors sampled from the approximations are able to reconstruct the test data. We see that the PRMM and RMM algorithms yield reconstruction errors that are less spread and at a lower level than the ones coming from the VRB algorithm. Note also that the VRB algorithm also produces badly-performing outlier values. This shows the higher approximation performance and robustness coming from using a more adapted geometry, as in our proposed method. Errors are more spread for the PRMM algorithm than for the RMM algorithm. This may be due to the sparsity-inducing proximal step, which creates larger eigenvalues for the covariance matrix (see Appendix \ref{section:appendixProxComputations} for details).

\section{Conclusion and perspectives}
\label{sec:conclusion}

We introduced in this work a Bregman proximal gradient algorithm to solve the variational inference problem of minimizing the sum of a Rényi divergence and a regularizing function over an exponential family. We used a Bregman divergence, equivalent to the Kullback-Leibler divergence, linking our algorithm with natural gradient methods. We also showed that our algorithm generalizes several moment-matching algorithms. We provided a black-box implementation for non-conjugate targets along with explicit computations for proximal steps enforcing sparsity of the solutions.

Using this novel Bregman-based perspective, we established strong convergence guarantees for our exact and sampling-based algorithms. For $\alpha \in (0,1]$ in the deterministic case, we established the monotonic decrease of the objective function, a finite-length property of the sequence of iterates, and subsequential convergence to a stationary point. When $\alpha = 1$ in the deterministic case, we also established the geometric convergence of the iterates towards the optimal parameters. \color{blue}In the sampling-based case, we established a finite-length property for the sequence of iterates and convergence of subgradients to zero when $\alpha \in (0,1]$. We provided convergence rates of the iterates to the minimizer in expectation, with rates dependent on the step and sample sizes. We also gave conditions on the step sizes and sample sizes to yield almost sure convergence of the iterates to the minimizer.\color{black}~We also exhibited a simple counter-example for which the corresponding Euclidean schemes may fail to converge, showing the necessity of resorting to an adapted geometry. These findings are backed by numerical results showing the versatility of our methods compared to more restricted moment-matching updates. Indeed, our parameters allow to tune the algorithms speed and robustness but also the features of the approximating densities. Comparison of our algorithms with their Euclidean counterparts also showed their robustness and good performance. 

This confirmed the benefits of using a regularized Rényi divergence and the underlying geometry of exponential families, but also opened several research avenues. 

We are not aware of any work studying Bregman proximal gradient algorithms with biased estimates of the gradients. As our analysis of our sampling-based algorithm mostly relies on standard techniques used to study Bregman-based algorithms, it should be possible to extend our results to more general settings. Our analysis also faces open problems in the analysis of Bregman-based algorithms, which are worthy of further investigation.\color{black}~Then, another venue of improvement would be the use of more complex optimization schemes, such as block updates or accelerated schemes. Variance reduction techniques as used in some black-box VI algorithms could also be used to improve our Algorithms. Finally, studying optimization schemes over mixtures of distributions from an exponential family could be a natural extension in order to tackle multimodal targets. Similarly, extending our analysis to values $\alpha > 1$ would allow to use the $\chi^2$ divergence, which plays an important role for the analysis of importance sampling schemes.

\appendix

\section{Results about $F_{\pi}^{(\alpha)}$}
\label{section:appendix_f_pi_alpha}

\subsection{Proof of Proposition \ref{prop:gradient_f_pi_alpha}}
\label{proof:prop:gradient_f_pi_alpha}

For the sake of clarity, the expression of $\nabla^2 f_{\pi}^{(\alpha)}$ was not given in Proposition \ref{prop:gradient_f_pi_alpha}. However, this expression is useful for some of the following proofs. We thus show here that for any $\theta \in \interior \Theta \cap \domain f_{\pi}^{(\alpha)}$, we have
\begin{equation*}
    \nabla^2 f_{\pi}^{(\alpha)}(\theta) = 
    \begin{cases}
    \nabla^2 A(\theta) &\text{ if } \alpha = 1,\\
    \nabla^2 A (\theta) + (\alpha - 1) \left( \pi_{\theta}^{(\alpha)}(\Gamma \Gamma^\top) - \pi_{\theta}^{(\alpha)}(\Gamma) (\pi_{\theta}^{(\alpha)}(\Gamma))^\top \right) &\text{ if } \alpha \neq 1.
    \end{cases}
\end{equation*}

\begin{proof}[Proof of Proposition \ref{prop:gradient_f_pi_alpha}]
For the case $\alpha = 1$, note that $f_{\pi}^{(1)}$ can be written as
\begin{equation}
    \label{eq:F_piDecomposition}
    f_{\pi}^{(1)}(\theta) = \int \log(\pi(x)) \pi(x) \nu(dx) - \langle \theta, \pi(\Gamma) \rangle + A(\theta),\quad \forall \theta \in \Theta \cap \domain f_{\pi}^{(\alpha)},
\end{equation}
where $\Theta = \domain A$, and $A$ defined in Eq. \eqref{eq:logPartitionExponential}. The results come from the properties of $A$, given in Proposition \ref{prop:convexityTheta}.

We now turn to the case $\alpha \neq 1$. For every $\theta \in \Theta$, it is possible to decompose $f_{\pi}^{(\alpha)}$ as in
\begin{equation*}
    f_{\pi}^{(\alpha)}(\theta) = A(\theta) + \frac{1}{\alpha - 1} \log \left( \int \pi(x)^{\alpha} \exp (\langle \theta, \Gamma(x) \rangle)^{1 - \alpha} \nu(dx) \right).
\end{equation*}
where $\tilde{h}$ and $\tilde{p}$ are defined for any $\theta \in \interior \Theta \cap \domain f_{\pi}^{(\alpha)}$ and $x \in \mathcal{X}$ as $\tilde{h}(\theta) =  \int \pi(x)^{\alpha} \exp (\langle \theta, \Gamma(x) \rangle)^{1 - \alpha} \nu(dx)$ and $\tilde{p}(x,\theta) = \pi(x)^{\alpha}\exp (\langle \theta, \Gamma(x) \rangle)^{1-\alpha}$.

We can show through standard results that at any $\theta \in \interior \Theta$, the partial derivatives of $\tilde{h}$ of first and second order exist, are continuous and can be obtained by derivating under the integral sign. Since $\tilde{h}(\theta) > 0$ for all $\theta \in \Theta \cap \domain f_{\pi}^{(\alpha)}$, and $f_{\pi}^{(\alpha)}= A+  \frac{1}{\alpha - 1} \log \circ \tilde{h}$, these results with those of Proposition \ref{prop:convexityTheta} about $A$ give the following. On $\interior \Theta \cap \domain f_{\pi}^{(\alpha)}$, the map $f_{\pi}^{(\alpha)}$ admits continuous first and second order partial derivatives that can be obtained by differentiating under the integral sign.

We now turn to the explicit derivation of the gradient $\nabla f_{\pi}^{(\alpha)}$ and the Hessian $\nabla^2 f_{\pi}^{(\alpha)}$, whose components are respectively the first and second order partial derivatives. Consider $\theta \in \interior \Theta \cap \domain f_{\pi}^{(\alpha)}$. For $i \in \llbracket 1,n \rrbracket$, we first compute 
\begin{equation*}
    \frac{\partial \tilde{h}}{\partial \theta_i}(\theta) = (1 - \alpha) \int \Gamma_i(x)\pi(x)^{\alpha} q_{\theta}(x)^{1 - \alpha} \nu(dx).
\end{equation*}
From there, we obtain 
\begin{equation}
    \label{eq:decompPartial1_f_pi_alpha}
    \frac{\partial f_{\pi}^{(\alpha)}}{\partial \theta_i}(\theta) = \frac{\partial A}{\partial \theta_i}(\theta) - \frac{\int \Gamma_i(x) \pi(x)^{\alpha} \exp (\langle \theta, \Gamma(x) \rangle)^{1-\alpha}\nu(dx)}{\int \pi(x)^{\alpha} \exp (\langle \theta, \Gamma(x) \rangle)^{1-\alpha}\nu(dx)}.
\end{equation}
Since $q_{\theta}(x) = \exp(\langle \theta, \Gamma(x)\rangle) \exp( - A(\theta))$, we finally obtain that
\begin{equation*}
    \frac{\partial f_{\pi}^{(\alpha)}}{\partial \theta_i}(\theta) = \frac{\partial A}{\partial \theta_i}(\theta) - \pi_{\theta}^{(\alpha)}(\Gamma_i).
\end{equation*}
Because $\left( \nabla f_{\pi}^{(\alpha)}(\theta) \right)_i = \frac{\partial f_{\pi}^{(\alpha)}(\theta)}{\partial \theta_i}$, this concludes the computations about the gradient of $f_{\pi}^{(\alpha)}$.

Before computing the second order partial derivatives, we introduce another intermediate quantity. Denote $\tilde{g}_i : \theta \longmapsto \int \Gamma_i(x) \pi(x)^{\alpha} \exp(\langle \theta, \Gamma(x) \rangle)^{1-\alpha} \nu(dx)$ for $i \in \llbracket 1,n \rrbracket$. In fact, $\tilde{g}_i(\theta) = \frac{1}{1-\alpha} \frac{\partial \tilde{h}}{\partial \theta_i}(\theta)$, and from Eq. \eqref{eq:decompPartial1_f_pi_alpha}, we have $\frac{\partial f_{\pi}^{(\alpha)}}{\partial \theta_i}(\theta) = \frac{\partial A}{\partial \theta_i}(\theta) - \frac{\tilde{g}_i(\theta)}{\tilde{h}(\theta)}$. We also compute for any $j \in \llbracket 1,n \rrbracket$
\begin{equation*}
    \frac{\partial \tilde{g}_i}{\partial \theta_j}(\theta)= (1-\alpha) \int \Gamma_j(x)\Gamma_i(x) \pi(x)^{\alpha}\exp(\langle \theta, \Gamma(x) \rangle)^{1 - \alpha}\nu(dx).
\end{equation*}

Using those intermediate results, we obtain for $i,j \in \llbracket 1,n \rrbracket$ that
\begin{equation*}
    \frac{\partial^2 f_{\pi}^{(\alpha)}}{\partial \theta_j \partial \theta_i}(\theta) = \frac{\partial^2 A}{\partial \theta_j \partial \theta_j}(\theta) + (\alpha - 1) \left( \pi_{\theta}^{(\alpha)}(\Gamma_i \Gamma_j) - \pi_{\theta}^{(\alpha)}(\Gamma_i) \pi_{\theta}^{(\alpha)}(\Gamma_j) \right).
\end{equation*}
We conclude about the Hessian by using that $(\nabla^2 f_{\pi}^{(\alpha)}(\theta))_{i,j} = \frac{\partial^2 f_{\pi}^{(\alpha)}}{\partial \theta_j, \partial \theta_i}(\theta)$.
\end{proof}

\subsection{Proof of Proposition \ref{prop:momentMatchingUpdate}}
\label{proof:prop:momentMatchingUpdate}

\begin{proof}[Proof of Proposition \ref{prop:momentMatchingUpdate}]
From the definition of the operator $\gamma_{\tau_{k+1} f_{\pi}^{(\alpha)}}^A$, we write the optimality conditions that $\theta_{k+\frac{1}{2}}$ must satisfied (we assume that it is uniquely defined and in $\interior \domain \Theta$):
\begin{equation*}
    0 = \nabla f_{\pi}^{(\alpha)}(\theta_k) + \frac{1}{\tau_{k+1}}(\nabla A(\theta_{k+\frac{1}{2}}) - \nabla A(\theta_k))
\end{equation*}
Re-arranging the terms, we obtain that
\begin{equation*}
    \nabla A(\theta_{k+\frac{1}{2}}) = \nabla A(\theta_k) - \tau_{k+1} \nabla f_{\pi}^{(\alpha)}(\theta_k).
\end{equation*}
Using the characterization of $\nabla A$ from Proposition \ref{prop:convexityTheta} and the expression of $\nabla f_{\pi}^{(\alpha)}$ from Proposition \ref{prop:gradient_f_pi_alpha}, we obtain that the above is equivalent to $q_{\theta_{k+\frac{1}{2}}}(\Gamma) = \tau_{k+1} \pi_{\theta_k}^{(\alpha)}(\Gamma) + (1-\tau_{k+1}) q_{\theta_k}(\Gamma)$, showing the result.
    
\end{proof}

\subsection{Proof of Proposition \ref{prop:relativeProperties}}
\label{proof:prop:relativeProperties}

\begin{proof}[Proof of Proposition \ref{prop:relativeProperties}]
We prove relative smoothness and relative strong convexity by using the alternative characterizations given in \cite[Proposition 2.2]{hanzely2021} and \cite[Proposition 2.3]{hanzely2021}. $f_{\pi}^{(\alpha)}$ and $A$ are twice differentiable on $\interior \Theta$, so thanks to these results, $f_{\pi}^{(\alpha)}$ is $L$-relatively smooth with respect to $A$ if and only if $\nabla^2 f_{\pi}^{(\alpha} \preccurlyeq L \nabla^2 A$, on $\interior \Theta$, and it is $\rho$-relatively strongly convex with respect to $A$ if and only if $\rho \nabla^2 A \preccurlyeq \nabla^2 f_{\pi}^{(\alpha} $ on $\interior \Theta$.

We first cover the case $\alpha = 1$. In this case, we have that for every $\theta \in \interior \Theta$, $\nabla^2 f_{\pi}^{(1)}(\theta) = \nabla^2 A(\theta)$ from Proposition \ref{prop:gradient_f_pi_alpha} (see Section \ref{proof:prop:gradient_f_pi_alpha}). Therefore, the functions $f_{\pi}^{(1)} - A$ and $A - f_{\pi}^{(1)}$ have null Hessian on $\interior \Theta$, showing that they are convex, hence the result.

Now, consider $\alpha \neq 1$, then, under Assumption \ref{assumption:expFamily}, we recall from Proposition \ref{prop:gradient_f_pi_alpha} (see Section \ref{proof:prop:gradient_f_pi_alpha}) that
\begin{equation*}
    \nabla^2 f_{\pi}^{(\alpha)}(\theta) = \nabla^2 A(\theta) + (\alpha - 1)\left( \pi_{\theta}^{(\alpha)}(\Gamma \Gamma^\top) - \pi_{\theta}^{(\alpha)}(\Gamma) (\pi_{\theta}^{(\alpha)}(\Gamma))^\top \right),\, \forall \theta \in \interior \Theta.
\end{equation*}

Consider $\theta \in \interior \Theta$, we show now that $\pi_{\theta}^{(\alpha)}(\Gamma \Gamma^\top) - \pi_{\theta}^{(\alpha)}(\Gamma) (\pi_{\theta}^{(\alpha)}(\Gamma))^\top$ is positive semidefinite. Consider a vector $\xi \in \mathbb{R}^d$, then
\begin{align*}
    \langle \xi, \pi_{\theta}^{(\alpha)}(\Gamma \Gamma^\top), \xi \rangle &= \int (\langle \Gamma(x), \xi \rangle )^2 \pi_{\theta}^{(\alpha)}(x)\nu(dx)\\
    &\geq \left( \int \langle \Gamma(x), \xi \rangle \pi_{\theta}^{(\alpha)}(x)\nu(dx)  \right)^2\\
    &= \langle \xi, \pi_{\theta}^{(\alpha)}(\Gamma) \pi_{\theta}^{(\alpha)}(\Gamma)^\top \xi \rangle,
\end{align*}
where we used Jensen inequality to show the inequality. This shows that
\begin{equation*}
    \langle \xi, \left( \pi_{\theta}^{(\alpha)}(\Gamma \Gamma^\top) - \pi_{\theta}^{(\alpha)}(\Gamma) \pi_{\theta}^{(\alpha)}(\Gamma)^\top \right) \xi \rangle \geq 0,\, \forall \xi \in \mathbb{R}^d.
\end{equation*}

Therefore, for every $\theta \in \interior \Theta$, $\nabla^2( f_{\pi}^{(\alpha)} - A)(\theta)$ is positive semidefinite if $\alpha \geq 1$, and $\nabla^2( A - f_{\pi}^{(\alpha)})(\theta)$ is positive semidefinite if $\alpha \leq 1$. This shows that $f_{\pi}^{(\alpha)} - A$ is convex if $\alpha \geq 1$ and $A - f_{\pi}^{(\alpha)}$ is convex if $\alpha \leq 1$, giving the results using the characterizations from \cite[Proposition 2.2]{hanzely2021} and \cite[Proposition 2.3]{hanzely2021}.
\end{proof}

\subsection{Proof of Proposition \ref{prop:ExistenceMinimizerAlpha1}}
\label{proof:prop:ExistenceMinimizerAlpha1}

\begin{proof}[Proof of Proposition \ref{prop:ExistenceMinimizerAlpha1}]
Consider $\alpha > 0$.

$(i)$ $F_{\pi}^{(\alpha)}$ is proper because $f_{\pi}^{(\alpha)}$ is non-negative from Proposition \ref{prop:RényiKL_div}, takes finite values for some $\theta \in \Theta$ by Assumption \ref{assumption:expFamily}, and because $r$ is proper by Assumption \ref{assumption:regularizer2}. The fact that the infimum of \eqref{eq:pblmPropAdapt} is not equal to $-\infty$ comes from the non-negativity of $f_{\pi}^{(\alpha)}$ and the fact that $r$ is bounded from below from Assumption \ref{assumption:regularizer2}.

We now prove the lower semicontinuity. When $\alpha = 1$, we recall from Eq. \eqref{eq:F_piDecomposition} that
\begin{equation}
    \label{eq:decompositionAlpha1Proof}
    f_{\pi}^{(1)}(\theta) = H(\pi) - \langle \theta, \pi(\Gamma) \rangle + A(\theta),\, \forall \theta \in \Theta,
\end{equation}
where $H(\pi)= \int \log(\pi(x))\pi(x)\nu(dx)$. Because $A$ is lower semicontinuous on $\Theta$ from Proposition \ref{prop:convexityTheta}, so is $f_{\pi}^{(1)}$.

Now consider $\alpha \neq 1$. For every $\theta \in \Theta$, it is possible to decompose $f_{\pi}^{(\alpha)}$ as in
\begin{equation*}
    f_{\pi}^{(\alpha)}(\theta) = A(\theta) + \frac{1}{\alpha - 1} \log \left( \tilde{h}(\theta) \right),
\end{equation*}
where $\tilde{h}(\theta) =  \int \pi(x)^{\alpha} \exp (\langle \theta, \Gamma(x) \rangle)^{1 - \alpha} \nu(dx)$. The function $\tilde{h}$ is lower semicontinuous due to Fatou's lemma \cite[Lemma 18.13]{carothers2000} and takes values in $\mathbb{R}_{++}$, thus $\frac{1}{\alpha - 1} \log \circ \tilde{h}$ is lower semicontinuous.

$(ii)$ We now turn to the second point, concerning values $\alpha \geq 1$. In the particular case $\alpha = 1$, consider again the decomposition given in Eq. \eqref{eq:decompositionAlpha1Proof}. Because of Assumption \ref{assumption:wellPosednessPRMM}, $\pi(\Gamma) \in \interior \domain A^*$. Thanks to \cite[Fact 2.11]{bauschke1997} and Proposition \ref{prop:convexityTheta}, this ensures that $f_{\pi}^{(1)}$ is coercive. Because of Assumption \ref{assumption:expFamily} which ensures the well-posedness of $f_{\pi}^{(\alpha)}$, we have from \cite[Theorem 3]{vanErven2014} that
\begin{equation*}
    f_{\pi}^{(1)}(\theta) \leq f_{\pi}^{(\alpha)}(\theta),\, \forall \theta \in \interior \Theta.
\end{equation*}
This ensures that $f_{\pi}^{(\alpha)}$ is coercive for $\alpha > 1$. The regularizer $r$ is bounded from below thanks to Assumption \ref{assumption:regularizer2}, so $F_{\pi}^{(\alpha)}$ is also coercive for $\alpha \geq 1$.

We have proven that $F_{\pi}^{(\alpha)}$ is lower-continuous and coercive, so there exists $\theta_* \in \domain \Theta$ such that $F_{\pi}^{(\alpha)}(\theta_*) = \vartheta_{\pi}^{(\alpha)}$. We now use the optimality conditions that $\theta_*$ satisfies to show that $\theta_* \in \interior \Theta$. In particular, we have from \cite[Theorem 16.2]{bauschke2011} that
\begin{equation}
    \label{eq:proofFermatRule}
    0 \in \partial F_{\pi}^{(\alpha)}(\theta_*).
\end{equation}
When $\alpha = 1$, we can split the subdifferential of $F_{\pi}^{(\alpha)}$ as $\partial F_{\pi}^{(1)}(\theta_*)= \pi(\Gamma) + \partial A(\theta_*) + \partial r(\theta_*)$. This comes from the decomposition \eqref{eq:F_piDecomposition}, Assumption \ref{assumption:regularizer2} and the convexity and properness of $\theta \longmapsto -\langle \theta, \pi(\Gamma) \rangle$, $A$ and $r$, and \cite[Corollary 16.38]{bauschke2011}.
By the same arguments, when $\alpha > 1$, $\partial F_{\pi}^{(\alpha)}(\theta_*) = \partial \left( \frac{1}{\alpha - 1} \log \circ h_{\pi}^{(\alpha)} \right)(\theta_*) + \partial A(\theta_*) + \partial r(\theta_*)$

Assume by contradiction that $\theta_*$ belongs to the boundary of $\Theta$. Then $\partial A(\theta_*) = \emptyset$, because of Proposition \ref{prop:LegendreFunctionsProperties}, so Eq. \eqref{eq:proofFermatRule} implies that $0 \in \emptyset$. This shows that $\theta_* \in \interior \Theta$.

Finally, since $A$ is strictly convex on $\interior \Theta$ (Proposition \ref{prop:convexityTheta}), so is $F_{\pi}^{(1)}$, so such $\theta_*$ is unique.
\end{proof}

\subsection{Proof of Proposition \ref{prop:counterexample}}
\label{proof:prop:counterexample}

\begin{proof}[Proof of Proposition \ref{prop:counterexample}]
Consider the family of one-dimensional centered Gaussian distributions with variance $\sigma^2$, that we denote by $\mathcal{G}^1_0$ in the following. It is an exponential family, with parameter $\theta = -\frac{1}{2 \sigma^2}$, sufficient statistics $\Gamma(x) = x^2$ and log-partition function $A(\theta) = \frac{1}{2} \log(2 \pi) - \frac{1}{2}\log(-2 \theta)$, whose domain is $\Theta = \mathbb{R}_{--}$. We show that $(f_{\pi}^{(\alpha)})'$ is not Lipschitz for $\alpha > 0$ by showing that $(f_{\pi}^{(\alpha)})''$ is unbounded on $\Theta$.

Consider first the case $\alpha = 1$. From Proposition \ref{prop:gradient_f_pi_alpha} (see Section \ref{proof:prop:gradient_f_pi_alpha}), $(f_{\pi}^{(\alpha)})''$ is independent of the choice of the target $\pi$, and is equal to
\begin{equation}
    \label{eq:nonSmoothnessAlpha1}
    (f_{\pi}^{(\alpha)})''(\theta) = A''(\theta) = \frac{1}{2 \theta^2}.
\end{equation}

Now, for $\alpha \neq 1$, consider a target $\pi \in \mathcal{G}_0^1$, meaning that there exists $\theta_{\pi} \in \Theta$ such that $\pi = q_{\theta_{\pi}}$. We can compute that $\pi_{\theta}^{(\alpha)} = q_{\alpha \theta_{\pi} + (1 - \alpha) \theta}$, assuming that $\theta$ is such that $\alpha \theta_{\pi} + (1 - \alpha) \theta \in \Theta$. This condition is always satisfied when $\alpha \leq 1$, but when $\alpha > 1$, it is equivalent to having $\theta > \frac{\alpha}{\alpha - 1}\theta_{\pi}$. In the case $\alpha > 1$, $f_{\pi}^{(\alpha)}$ is not even defined outside of $(\frac{\alpha}{\alpha - 1} \theta_{\pi}, 0)$,  showing that $\domain f_{\pi}^{(\alpha)} = (0, \frac{\alpha}{\alpha - 1}\theta_{\pi})$ for $\alpha > 1$.  In the following, we consider $\theta \in \domain f_{\pi}^{(\alpha)}$. Then we compute using the result of Proposition \ref{prop:gradient_f_pi_alpha} (see Section \ref{proof:prop:gradient_f_pi_alpha}) that
\begin{equation*}
    (f_{\pi}^{(\alpha)})''(\theta) = \frac{1}{2 \theta^2} + (\alpha - 1) \left( \int x^4 q_{\alpha \theta_{\pi} + (1-\alpha)\theta}(x) dx - \left(\int x^2 q_{\alpha \theta_{\pi} + (1-\alpha)\theta}(x) dx \right)^2 \right).
\end{equation*}

To do so, we recall the following formulas
\begin{align*}
    \int x^4 \exp(-b x^2)dx &= \frac{3\sqrt{\pi}}{4 b^{5/2}},&
    \int x^2 \exp(-b x^2)dx &= \frac{\sqrt{\pi}}{2 b^{3/2}},
\end{align*}
and we note that $A(\theta) = \log \left( \sqrt{-\frac{\pi}{\theta}} \right)$. We first compute 
\begin{align*}
    \int x^4 q_{\alpha \theta_{\pi} + (1-\alpha)\theta}(x) dx &= \exp(A(\alpha\theta_{\pi} +(1-\alpha)\theta))^{-1} \int x^4 \exp(-((\alpha-1) \theta - \alpha \theta_{\pi})x^2) dx\\
    &= \left(\frac{\pi}{(\alpha-1)\theta - \alpha \theta_{\pi}} \right)^{-1/2} \frac{3 \sqrt{\pi}}{4 ((\alpha - 1) \theta - \alpha \theta_{\pi})^{5/2}}\\
    &= \frac{3}{4 ((\alpha-1) \theta - \alpha \theta_{\pi})^2},
\end{align*}
and then by similar means $\int x^2 q_{\alpha \theta_{\pi} + (1-\alpha)\theta}(x) dx = \frac{1}{2 ((\alpha-1) \theta - \alpha \theta_{\pi})}$.

These calculations yield
\begin{equation}
    \label{eq:nonSmoothnessAlphaNot1}
    (f_{\pi}^{(\alpha)})''(\theta) = \frac{1}{2 \theta^2} + \frac{\alpha - 1}{2 ((\alpha-1) \theta - \alpha \theta_{\pi})^2}.
\end{equation}

Equations \eqref{eq:nonSmoothnessAlpha1} and \eqref{eq:nonSmoothnessAlphaNot1} show that the absolute value of $\nabla^2 f_{\pi}^{(\alpha)}$ goes to $+\infty$ when $\theta$ approaches $0$ or $\frac{\alpha}{\alpha - 1} \theta_{\pi}$, which is in $\Theta$ if and only if $\alpha > 1$.
\end{proof}

\subsection{Proof of Proposition \ref{prop:targetSameFamily}}
\label{proof:prop:targetSameFamily}

\begin{proof}[Proof of Proposition \ref{prop:targetSameFamily}]

    Since $\pi = q_{\theta_{\pi}}$, we can compute that $\pi_{\theta}^{(\alpha)} = q_{\alpha \theta_{\pi} + (1-\alpha)\theta}$. Since $\Theta$ is convex (from Proposition \ref{prop:convexityTheta}) and $\theta_{\pi}, \theta \in \interior \Theta$, $\alpha \theta_{\pi} + (1-\alpha)\theta \in \interior \Theta$. Therefore, $\pi_{\theta}^{(\alpha)}(\Gamma) = \nabla A(\alpha \theta_{\pi} + (1-\alpha) \theta) \in \interior \domain A^*$, since $A$ is Legendre and $\alpha \theta_{\pi} + (1-\alpha) \theta \in \interior \domain A$, showing that Assumption \ref{assumption:wellPosednessPRMM} is satisfied.

    Recall that $RD_{\alpha}(\pi, q_{\theta}) \geq 0$, and it is equal to zero if and only if $\pi = q_{\theta}$. In our case, this means that $f_{\pi}^{(\alpha)}(\theta) = 0$ if and only if $\theta = \theta_{\pi}$. Since $\mathcal{Q}$ is minimal by assumption, this shows the existence and unicity of the minimizer of $f_{\pi}^{(\alpha)}$.
    
    Consider now a stationary point of $f_{\pi}^{(\alpha)}$ i.e.,~$\theta \in \interior \Theta$ such that $\nabla f_{\pi}^{(\alpha)}(\theta) = 0$. This implies that
    \begin{equation}
        \label{eq:proof:targetSameFamily}
        q_{\theta}(\Gamma) = \pi_{\theta}^{(\alpha)}(\Gamma),
    \end{equation}
    due to the characterization given in Proposition \ref{prop:gradient_f_pi_alpha}. Under our assumptions, Eq.~\eqref{eq:proof:targetSameFamily} now reads
    \begin{equation*}
        \nabla A(\theta) = \nabla A(\alpha \theta_{\pi} + (1-\alpha) \theta),
    \end{equation*}
    which is equivalent to having $\theta = \theta_{\pi}$ by inverting $\nabla A$ on both sides. Hence we have shown that $\nabla f_{\pi}^{(\alpha)}(\theta) = 0$ if and only $\theta = \theta_{\pi}$, showing the existence and unicity of the stationary point of $f_{\pi}^{(\alpha)}$.
\end{proof}

\subsection{Proof of Proposition \ref{prop:PolyakInequality}}
\label{proof:prop:PolyakInequality}

\begin{proof}[Proof of Proposition \ref{prop:PolyakInequality}]

Under our Assumptions on $\pi$, we can compute
\begin{equation}
    \label{eq:JensenDivergence}
    f_{\pi}^{(\alpha)}(\theta) = \frac{1}{1-\alpha} \left( \alpha A(\theta_{\pi}) + (1-\alpha)A(\theta) - A(\alpha \theta_{\pi} + (1-\alpha)\theta) \right),\,\forall \theta \in \interior \Theta
\end{equation}
Consider in the following $\theta, \theta' \in \interior \Theta$, with $\theta, \theta' \in B(\theta_{\pi}, \upsilon)$.

$(i)$ We can compute
\begin{align*}
    A(\alpha \theta_{\pi} + (1-\alpha)\theta) &= A(\theta_{\pi} + (1-\alpha)(\theta-\theta_{\pi})\\
    &= A(\theta_{\pi}) + (1-\alpha)\langle \nabla A(\theta_{\pi}), \theta - \theta_{\pi} \rangle + \frac{(1-\alpha)^2}{2} \| \theta - \theta_{\pi} \|_{\nabla^2 A(\theta_{\pi})} + o(\upsilon^2),\\
    A(\theta) &= A(\theta_{\pi}) + \langle \nabla A(\theta_{\pi}), \theta - \theta_{\pi} \rangle + \frac{1}{2} \| \theta - \theta_{\pi} \|_{\nabla^2 A(\theta_{\pi})} + o(\upsilon^2).
\end{align*}
Using Eq.~\eqref{eq:JensenDivergence}, these two equalities imply in particular that
\begin{equation}
    \label{eq:RenyiAlmostQuadratic}
    f_{\pi}^{(\alpha)}(\theta ) = \frac{\alpha}{2} \| \theta - \theta_{\pi} \|_{\nabla^2 A(\theta_{\pi})}^2 + o(\upsilon^2).
\end{equation}

$(ii)$ We now turn to the Polyak-\L ojasiewicz inequality. We begin by showing that $d_{f_{\pi}^{(\alpha)}}(\theta, \theta') = \frac{\alpha}{2} \| \theta - \theta' \|_{\nabla^2 A(\theta_{\pi})}^2$ up to higher order terms. We now further compute that
\begin{align*}
    \nabla f_{\pi}^{(\alpha)}(\theta) &= \nabla A(\theta) - \nabla A(\alpha \theta_{\pi} + (1-\alpha)\theta_{\pi})\\
    &= \nabla A(\theta_{\pi} + (\theta-\theta_{\pi})) - \nabla A(\theta_{\pi} + (1-\alpha)(\theta - \theta_{\pi})\\
    &= \nabla A(\theta_{\pi}) + \nabla^2 A(\theta_{\pi})(\theta - \theta_{\pi}) - \nabla A(\theta_{\pi}) - (1-\alpha) \nabla^2A(\theta_{\pi})(\theta - \theta_{\pi}) + o(\upsilon)\\
    &= \alpha \nabla^2 A(\theta_{\pi})(\theta - \theta_{\pi}) + o(\upsilon),
\end{align*}
which yields with Eq.~\eqref{eq:RenyiAlmostQuadratic} that
\begin{align*}
    d_{f_{\pi}^{(\alpha)}}(\theta, \theta') &= f_{\pi}^{(\alpha)}(\theta) - f_{\pi}^{(\alpha)}(\theta') - \langle \nabla f_{\pi}^{(\alpha)}(\theta'), \theta - \theta' \rangle\\
    &= \frac{\alpha}{2} \| \theta - \theta_{\pi} \|_{\nabla^2 A(\theta_{\pi})}^2 - \frac{\alpha}{2} \| \theta' - \theta_{\pi} \|_{\nabla^2 A(\theta_{\pi})}^2 - \alpha \langle \nabla^2 A(\theta_{\pi})(\theta' - \theta_{\pi}), \theta - \theta' \rangle + o( \upsilon^2) \\
    &= \frac{\alpha}{2} \| \theta - \theta' \|_{\nabla^2 A(\theta_{\pi})}^2 + o(\upsilon^2).
\end{align*}

We thus have that 
\begin{equation}
    \label{eq:intermediatePolyak}
    f_{\pi}^{(\alpha)}(\theta) = f_{\pi}^{(\alpha)}(\theta') + \langle \nabla f_{\pi}^{(\alpha)}(\theta'), \theta - \theta'\rangle+ \frac{\alpha}{2} \| \theta - \theta' \|_{\nabla^2 A(\theta_{\pi})}^2 + o(\upsilon^2).
\end{equation}
When $\theta'$ is fixed, the quantity $\theta \longmapsto f_{\pi}^{(\alpha)}(\theta') + \langle \nabla f_{\pi}^{(\alpha)}(\theta'), \theta - \theta'\rangle + \frac{\alpha}{2} \| \theta - \theta' \|_{\nabla^2 A(\theta_{\pi})}^2$ is a quadratic form that is minimized when the following optimality condition is satisfied:
\begin{equation}
    \label{eq:optConditionSameFamily}
    \nabla f_{\pi}(\theta') + \alpha \nabla^2 A(\theta_{\pi})(\theta - \theta') = 0.
\end{equation}
We now compute the inverse of $\nabla^2 A(\theta_{\pi})$. Consider $\eta \in \interior \domain A^*$. Since $A$ is Legendre, we have that $\nabla A(\nabla A^*(\eta)) = \eta$, therefore differentiating this expression with respect to $\eta$ yields
\begin{equation*}
    \nabla^2 A^*(\eta) \nabla^2 A(\nabla A^*(\eta)) = Id.
\end{equation*}
Now take $\eta = \nabla A(\theta_{\pi})$ which belongs to $\interior \domain A^*$ since $\theta_{\pi} \in \interior \domain A$. This yields $\nabla^2 A^* (\nabla A(\theta_{\pi})) \nabla^2 A(\theta_{\pi}) = I_d$. This shows that the optimality condition of Eq.~\eqref{eq:optConditionSameFamily} is equivalent to having
\begin{equation*}
    \theta - \theta' = - \frac{1}{\alpha} \nabla^2 A^* (\nabla A(\theta_{\pi})) \nabla f_{\pi}^{(\alpha)}(\theta').
\end{equation*} 
This shows that the right-hand side of Eq.~\eqref{eq:intermediatePolyak} can be minorized to obtain:
\begin{align*}
    f_{\pi}^{(\alpha)}(\theta) 
    &\geq  f_{\pi}^{(\alpha)}(\theta') - \frac{1}{\alpha} \langle \nabla f_{\pi}^{(\alpha)}(\theta'), \nabla^2 A^* (\nabla A(\theta_{\pi})) \nabla f_{\pi}^{(\alpha)}(\theta')\rangle\\ &+ \frac{1}{2\alpha} \langle \nabla^2 A^* (\nabla A(\theta_{\pi})) \nabla f_{\pi}^{(\alpha)}(\theta'), \nabla^2 A(\theta_{\pi}) \nabla^2 A^* (\nabla A(\theta_{\pi})) \nabla f_{\pi}^{(\alpha)}(\theta')\rangle + o(\upsilon^2)\\
    &= f_{\pi}^{(\alpha)}(\theta') - \frac{1}{2\alpha} \| \nabla f_{\pi}^{(\alpha)}(\theta') \|_{\nabla^2 A^* (\nabla A(\theta_{\pi}))}^2 + o(\upsilon^2).
\end{align*}
This is true in particular for $\theta = \theta_{\pi}$, which yields the result.
\end{proof}

\section{Convergence analysis of Algorithms \ref{alg:idealizedBPG} and \ref{alg:MCrelaxedMomentMatching}}
\label{section:appendixConvergence}

\subsection{Proof of Proposition \ref{prop:wellPosednessOperators}}
\label{proof:prop:wellPosednessOperators}

\begin{proof}[Proof of Proposition \ref{prop:wellPosednessOperators}]$\phantom{a}$
\color{blue}
$(i)$ We start by considering the quantity
\begin{equation*}
    \nabla A(\theta) - \tau \nabla f_{\pi}^{(\alpha)}(\theta) = \tau \pi_{\theta}^{(\alpha)}(\Gamma) + (1-\tau) \nabla A(\theta)
\end{equation*}
From Assumption \ref{assumption:expFamily} and Proposition \ref{prop:convexityTheta}, $A$ is a Legendre function and thus benefits from the results of Proposition \ref{prop:LegendreFunctionsProperties}. In particular, $\nabla A(\theta) \in \interior \domain A^*$. Similarly, $\pi_{\theta}^{(\alpha)}(\Gamma) \in \interior \domain A^*$ from Assumption \ref{assumption:wellPosednessPRMM}. Since $\tau \in (0,1]$ and $\interior \domain A^*$ is convex, we have
\begin{equation*}
    \nabla A(\theta) - \tau \nabla f_{\pi}^{(\alpha)}(\theta) \in \interior \domain A^*.
\end{equation*}
Due to $A$ having the Legendre property, we have from Proposition \ref{prop:LegendreFunctionsProperties} that there exists a unique $\theta_{\gamma} \in \interior \Theta$ such that 
\begin{equation*}
    \nabla A(\theta_{\gamma}) = \nabla A(\theta) - \tau \nabla f_{\pi}^{(\alpha)}(\theta).
\end{equation*}
We have thus shown that $\theta_{\gamma}$ uniquely solves the necessary optimality conditions involved in the definition of $\gamma_{\tau f_{\pi}^{(\alpha)}}^A$, meaning that $\gamma_{\tau f_{\pi}^{(\alpha)}}^A(\theta)$ is well-defined, as it is equal to $\theta_{\gamma}$ which is uniquely defined, and that $\gamma_{\tau f_{\pi}^{(\alpha)}}^A(\theta) \in \interior \Theta$, since $\theta_{\gamma} \in \interior \Theta$.
\color{black}

$(ii)$ We conclude about the proximal operator with \cite[Proposition 3.21 (vi)]{bauschke2003monotone}, which ensures that $\domain \prox_{\tau r}^A = \interior \Theta$, with \cite[Proposition 3.23 (v)]{bauschke2003monotone} which ensures that $\range \prox_{\tau_{k+1} r}^A \subset \interior \domain A$, and with \cite[Proposition 3.22 (ii)(d)]{bauschke2003monotone}, showing that $\prox_{\tau r}^A$ is single-valued.

$(iii)$ The third point comes from \cite[Lemma 3]{gao2020}.
\end{proof}

In order to prove Propositions \ref{prop:generalConvResult} and \ref{prop:convResultAlpha1}, we start with a \emph{sufficient decrease lemma} that reads as follows.

\begin{lemma}
    \label{lemma:sufficientDecrease}
    Under Assumptions \ref{assumption:expFamily}, \ref{assumption:wellPosednessPRMM}, and \ref{assumption:regularizer2}, for $\tau > 0$ and $\alpha \in (0,1]$, we have that for every $\theta \in \interior \Theta$,
    \begin{equation}
        \label{eq:SuffDecreaseAlpha01}
        \tau \left( F_{\pi}^{(\alpha)}(T_{\tau F_{\pi}^{(\alpha)}}^A(\theta)) - F_{\pi}^{(\alpha)}(\theta) \right) \leq - d_A(\theta, T_{\tau F_{\pi}^{(\alpha)}}^A(\theta)) + (\tau - 1) d_A(T_{\tau F_{\pi}^{(\alpha)}}^A(\theta), \theta).
    \end{equation}
    In the particular case where $\alpha = 1$, we further have
    \begin{multline}
        \label{eq:SuffDecreaseAlpha1}
        \tau \left( F_{\pi}^{(1)}(T_{\tau F_{\pi}^{(1)}}^A(\theta)) - F_{\pi}^{(1)}(\theta') \right) \leq (1 - \tau) d_A(\theta', \theta) - (1 - \tau) d_A(T_{\tau F_{\pi}^{(1)}}^A(\theta), \theta)\\ - d_A( \theta', T_{\tau F_{\pi}^{(1)}}^A(\theta)),\, \forall \theta' \in \interior \Theta.
    \end{multline}
\end{lemma}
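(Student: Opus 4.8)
The plan is to treat this as the standard descent lemma for a Bregman proximal gradient step in the metric induced by $A$, and to drive everything with the relative smoothness and relative strong convexity of Proposition \ref{prop:relativeProperties}. Throughout I write $\theta^+ := T_{\tau F_{\pi}^{(\alpha)}}^A(\theta)$; by Proposition \ref{prop:wellPosednessOperators} this point is well-defined and lies in $\interior \Theta$, so that $\nabla A(\theta^+)$, $\nabla f_{\pi}^{(\alpha)}(\theta)$, and all the Bregman divergences in the statement make sense. The first step is to extract the first-order optimality condition for the argmin defining $\theta^+$. The objective $\theta' \mapsto r(\theta') + \langle \nabla f_{\pi}^{(\alpha)}(\theta), \theta' - \theta\rangle + \frac{1}{\tau} d_A(\theta', \theta)$ is the sum of the convex $r$ and a smooth term whose gradient in $\theta'$ is $\nabla f_{\pi}^{(\alpha)}(\theta) + \frac{1}{\tau}(\nabla A(\theta') - \nabla A(\theta))$. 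Fermat's rule together with the subdifferential sum rule (valid under Assumptions \ref{assumption:expFamily} and \ref{assumption:regularizer2}) then yields a subgradient $\xi \in \partial r(\theta^+)$ with $\xi = -\nabla f_{\pi}^{(\alpha)}(\theta) - \frac{1}{\tau}(\nabla A(\theta^+) - \nabla A(\theta))$, so that convexity of $r$ gives, for every $\theta' \in \interior \Theta$, the key inequality $r(\theta^+) \le r(\theta') + \langle \nabla f_{\pi}^{(\alpha)}(\theta) + \frac{1}{\tau}(\nabla A(\theta^+) - \nabla A(\theta)),\, \theta' - \theta^+\rangle$.

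For the general case $\alpha \in (0,1]$, I would add to this inequality, taken at $\theta' = \theta$, the $1$-relative smoothness bound $f_{\pi}^{(\alpha)}(\theta^+) \le f_{\pi}^{(\alpha)}(\theta) + \langle \nabla f_{\pi}^{(\alpha)}(\theta), \theta^+ - \theta\rangle + d_A(\theta^+, \theta)$ from Proposition \ref{prop:relativeProperties}. The two linear terms in $\nabla f_{\pi}^{(\alpha)}(\theta)$ cancel, leaving $F_{\pi}^{(\alpha)}(\theta^+) \le F_{\pi}^{(\alpha)}(\theta) + d_A(\theta^+, \theta) + \frac{1}{\tau}\langle \nabla A(\theta^+) - \nabla A(\theta),\, \theta - \theta^+\rangle$. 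Substituting the symmetrization identity $\langle \nabla A(\theta^+) - \nabla A(\theta),\, \theta - \theta^+\rangle = -d_A(\theta, \theta^+) - d_A(\theta^+, \theta)$, which is immediate from the definition of $d_A$, and multiplying through by $\tau > 0$ gives exactly \eqref{eq:SuffDecreaseAlpha01}.

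For $\alpha = 1$ I would keep the subgradient inequality at a general $\theta'$ and additionally invoke the $1$-relative strong convexity of $f_{\pi}^{(1)}$ in the form $f_{\pi}^{(1)}(\theta) + \langle \nabla f_{\pi}^{(1)}(\theta), \theta' - \theta\rangle \le f_{\pi}^{(1)}(\theta') - d_A(\theta', \theta)$. Combining this with the relative smoothness bound on $f_{\pi}^{(1)}(\theta^+)$ and the subgradient inequality, the $\nabla f_{\pi}^{(1)}(\theta)$ terms telescope to $\langle \nabla f_{\pi}^{(1)}(\theta), \theta' - \theta\rangle$, and I obtain $F_{\pi}^{(1)}(\theta^+) \le F_{\pi}^{(1)}(\theta') - d_A(\theta', \theta) + d_A(\theta^+, \theta) + \frac{1}{\tau}\langle \nabla A(\theta^+) - \nabla A(\theta),\, \theta' - \theta^+\rangle$. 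The remaining inner product is then rewritten with the three-point identity $\langle \nabla A(\theta^+) - \nabla A(\theta),\, \theta' - \theta^+\rangle = d_A(\theta', \theta) - d_A(\theta', \theta^+) - d_A(\theta^+, \theta)$; multiplying by $\tau$ and collecting terms yields \eqref{eq:SuffDecreaseAlpha1}.

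The computations are elementary once the two Bregman identities are in hand, so I expect the only genuine difficulty to be the justification of the optimality condition: one must guarantee that the argmin defining $\theta^+$ is attained at an interior point where $A$ is differentiable and where the subdifferential sum rule applies, which is exactly what Proposition \ref{prop:wellPosednessOperators} and the standing Assumptions \ref{assumption:expFamily}--\ref{assumption:regularizer2} provide. A secondary point requiring care is the bookkeeping with the non-symmetric arguments of $d_A$, since swapping the two arguments changes which identity is used and hence the sign structure of the final bound; matching this precisely against the stated right-hand sides is where an error would most easily creep in.
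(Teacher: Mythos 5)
Your proof is correct; I checked the algebra in both cases and the inequalities \eqref{eq:SuffDecreaseAlpha01} and \eqref{eq:SuffDecreaseAlpha1} come out exactly as stated. The route is genuinely different from the paper's, though: the paper simply invokes the descent lemma of \cite[Lemma 4.1]{teboulle2018} as a black box, which gives, for all $\theta' \in \interior\Theta$, $\tau ( F_{\pi}^{(\alpha)}(T^A_{\tau F_{\pi}^{(\alpha)}}(\theta)) - F_{\pi}^{(\alpha)}(\theta') ) \leq d_A(\theta',\theta) - (1-\tau) d_A(T^A_{\tau F_{\pi}^{(\alpha)}}(\theta),\theta) - d_A(\theta', T^A_{\tau F_{\pi}^{(\alpha)}}(\theta)) - \tau\, d_{f_{\pi}^{(\alpha)}}(\theta',\theta)$, then obtains \eqref{eq:SuffDecreaseAlpha01} by setting $\theta'=\theta$ and \eqref{eq:SuffDecreaseAlpha1} by bounding $d_{f_{\pi}^{(1)}}(\theta',\theta) \geq d_A(\theta',\theta)$ via relative strong convexity. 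What you have done is essentially re-derive that cited lemma from first principles — prox optimality condition, subgradient inequality for $r$, relative smoothness, and the Bregman three-point identity — and your intermediate inequality before applying strong convexity is precisely Teboulle's (note that $\tau\langle\nabla f_{\pi}^{(\alpha)}(\theta),\theta'-\theta\rangle + \tau(f_{\pi}^{(\alpha)}(\theta)-f_{\pi}^{(\alpha)}(\theta'))$ is exactly $-\tau\, d_{f_{\pi}^{(\alpha)}}(\theta',\theta)$). The citation-based proof is shorter but requires the reader to verify that the hypotheses of the external lemma transfer to the finite-dimensional Hilbert/Legendre setting here (which the paper asserts in one clause); your self-contained version makes the dependence on Assumptions \ref{assumption:expFamily}--\ref{assumption:regularizer2} explicit at the one place where it genuinely matters, namely the validity of the subdifferential sum rule and the interiority of $T^A_{\tau F_{\pi}^{(\alpha)}}(\theta)$ supplied by Proposition \ref{prop:wellPosednessOperators}. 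Your closing remark about the non-symmetric bookkeeping of $d_A$ is exactly the right place to be careful, and you got it right.
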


\begin{proof}
Using \cite[Lemma 4.1]{teboulle2018}, which still holds in our finite-dimensional Hilbert setting, we get that
\begin{multline*}
    \tau \left( F_{\pi}^{(\alpha)}(T_{\tau F_{\pi}^{(\alpha)}}^A(\theta)) - F_{\pi}^{(\alpha)}(\theta') \right) \leq  d_A(\theta', \theta) - (1 - \tau) d_A(T_{\tau F_{\pi}^{(1)}}^A(\theta), \theta)\\ - d_A( \theta', T_{\tau F_{\pi}^{(1)}}^A(\theta)) - \tau d_{f_{\pi}^{(\alpha)}}(\theta', \theta),\, \forall \theta' \in \interior \Theta,
\end{multline*}
where $d_{f_{\pi}^{(\alpha)}}(\theta', \theta) = f_{\pi}^{(\alpha)}(\theta') - f_{\pi}^{(\alpha)}(\theta) - \langle \nabla f_{\pi}^{(\alpha)}(\theta), \theta' - \theta \rangle$.

Equation \eqref{eq:SuffDecreaseAlpha01} comes by evaluating the above at $\theta' = \theta$. To get Eq.~\eqref{eq:SuffDecreaseAlpha1}, the strong convexity of $f_{\pi}^{(1)}$ relatively to $A$ yields
\begin{equation*}
    d_{f_{\pi}^{(1)}}(\theta', \theta) \geq d_A(\theta', \theta),\, \forall \theta', \theta \in \interior \Theta,
\end{equation*}
showing the result.
\end{proof}

We also give a \emph{sequential consistency} lemma, that links the Bregman divergence $d_A$ with the Euclidean distance.

\begin{lemma}
    \label{lemma:sequentialConsistency}
    Consider two sequences $\{ \theta_k\}_{k \in \mathbb{N}}$ and $\{ \theta'_k\}_{k \in \mathbb{N}}$ and assume that there exists a compact set $C \subset \interior \Theta$ such that $\theta_k, \theta'_k \in C$ for every $k \in \mathbb{N}$. In this case, if $d_A(\theta_k, \theta'_k) \xrightarrow[k \rightarrow +\infty]{} 0$, then $\| \theta_k - \theta'_k \| \xrightarrow[k \rightarrow +\infty]{} 0$.
\end{lemma}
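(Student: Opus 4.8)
The plan is to argue by contradiction, using only the joint continuity of the Bregman divergence on compact subsets of $\interior \Theta$ together with its identification property from Proposition~\ref{prop:BregmanIsDiv}; this keeps the argument purely topological and sidesteps any Hessian estimate.

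First I would record that $d_A$ is jointly continuous on $(\interior \Theta)\times(\interior \Theta)$. Indeed, by Proposition~\ref{prop:convexityTheta} the log-partition function $A$ is differentiable on $\interior \Theta$ with continuous partial derivatives, so both $A$ and $\nabla A$ are continuous there. Since $d_A(\theta,\theta')=A(\theta)-A(\theta')-\langle \nabla A(\theta'),\theta-\theta'\rangle$, the map $(\theta,\theta')\mapsto d_A(\theta,\theta')$ is then continuous on $(\interior \Theta)\times(\interior \Theta)$, in particular on the compact set $C\times C$.

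Next, suppose for contradiction that $\|\theta_k-\theta'_k\|$ does not converge to $0$. Then there exist $\delta>0$ and a subsequence along which $\|\theta_k-\theta'_k\|\geq \delta$. Since $C$ is compact, I can extract a further subsequence $\{(\theta_{k_j},\theta'_{k_j})\}_{j\in\mathbb{N}}$ with $\theta_{k_j}\to\bar\theta$ and $\theta'_{k_j}\to\bar\theta'$, where $\bar\theta,\bar\theta'\in C\subset\interior \Theta$ and $\|\bar\theta-\bar\theta'\|\geq\delta>0$, so that $\bar\theta\neq\bar\theta'$. By the joint continuity just established, $d_A(\theta_{k_j},\theta'_{k_j})\to d_A(\bar\theta,\bar\theta')$. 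On the other hand, the hypothesis $d_A(\theta_k,\theta'_k)\to 0$ forces $d_A(\bar\theta,\bar\theta')=0$. Since $\bar\theta'\in\interior \Theta=\interior\domain A$, Proposition~\ref{prop:BregmanIsDiv} applies and yields $\bar\theta=\bar\theta'$, contradicting $\bar\theta\neq\bar\theta'$. Hence $\|\theta_k-\theta'_k\|\to 0$.

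The only point requiring care is ensuring that the limit points $\bar\theta,\bar\theta'$ remain inside $\interior \Theta$, so that $d_A(\bar\theta,\bar\theta')$ is well-defined and the identification property of Proposition~\ref{prop:BregmanIsDiv} can be invoked. This is exactly what the assumption buys us: the sequences live in a compact set $C$ that is strictly contained in the open set $\interior \Theta$, and, $C$ being closed, all its limit points lie back in $C\subset\interior \Theta$. As a quantitative alternative one could instead bound $d_A$ from below by $\tfrac{m}{2}\|\cdot\|^2$ through a second-order Taylor expansion on the compact convex hull $\conv C\subset\interior \Theta$, with $m>0$ the minimum over $\conv C$ of the smallest eigenvalue of $\nabla^2 A$ (positive definite by minimality of $\mathcal{Q}$); but the contradiction argument is shorter and avoids any eigenvalue bound.
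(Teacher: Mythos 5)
Your proof is correct, but it takes a genuinely different route from the paper's. The paper forms the convex hull $\conv C$ (compact, contained in $\interior \Theta$ since $\interior\Theta$ is convex) and invokes the uniform convexity of $A$ on $\conv C$, obtaining an increasing modulus $\psi$ vanishing only at $0$ with $d_A(\theta,\theta') \geq \psi(\|\theta-\theta'\|)$ for all $\theta,\theta' \in \conv C$; the conclusion then follows from the same contradiction you set up, but via this uniform quantitative lower bound rather than via limit points. Your argument instead uses only the joint continuity of $d_A$ on $(\interior\Theta)\times(\interior\Theta)$ (which does hold: $A$ is continuous on the interior of its domain as a finite-dimensional convex function, and a differentiable convex function automatically has a continuous gradient there), the sequential compactness of $C$, and the identification property of Proposition~\ref{prop:BregmanIsDiv} applied at the limit point, which is legitimate because $C$ is closed so $\bar\theta,\bar\theta' \in C \subset \interior\Theta = \interior\domain A$. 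Both arguments lean on exactly the same hypothesis --- that the sequences stay in a compact subset of $\interior\Theta$ --- but they spend it differently: the paper's uniform-convexity route yields a reusable quantitative estimate valid uniformly on $\conv C$, whereas your route is more elementary, requiring no modulus of convexity and no appeal to the convex hull. Your closing remark about the Hessian lower bound $\tfrac{m}{2}\|\cdot\|^2$ on $\conv C$ is essentially a concrete instantiation of the paper's strategy and would also work, given that $\nabla^2 A$ is the covariance of $\Gamma$ under $q_\theta$, positive definite by minimality, and continuous on the compact $\conv C$.
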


\begin{proof}
We introduce the convex hull of $C$, denoted by $\conv C$ which is the intersection of every convex set containing $C$. Therefore $\conv C \subset \interior \Theta$. Since we are in finite dimension, we also have that $\conv C$ is compact. Thus, $\conv C$ is a convex compact included in $\interior \Theta$.  

$A$ is proper, strictly convex, and continuous on $\conv C \subset \interior \Theta$, therefore, $A$ is \emph{uniformly convex}, following the definition of \cite[Definition 10.5]{bauschke2011} on $\conv C$ \cite[Proposition 10.15]{bauschke2011}. This means that there exists an increasing function $\psi : \mathbb{R}_+ \rightarrow [0, +\infty]$ that vanishes only at $0$, such that for every $\theta, \theta' \in \conv C$,
\begin{equation*}
    \psi( \| \theta - \theta' \|) \leq \frac{1}{2} A(\theta) + \frac{1}{2} A(\theta') - A \left(\frac{1}{2} \theta + \frac{1}{2}\theta'\right).
\end{equation*}

Because $A$ is convex on $\conv C$, we prove following the proof of \cite[Eq. (1.32)]{butnariu2000} that for every $\theta, \theta' \in \conv C$,
\begin{equation*}
    d_A(\theta, \theta') \geq \psi( \| \theta- \theta'\|).
\end{equation*}

Suppose now by contradiction that $d_A(\theta_k, \theta'_k) \xrightarrow[k \rightarrow +\infty]{} 0$ while there exists some $\epsilon > 0$ such that $\| \theta_k - \theta'_k \| \geq \epsilon$ for every $k \in \mathbb{N}$. Then we have that
\begin{equation*}
    d_A(\theta_k, \theta'_k) \geq \psi(\epsilon) > 0,
\end{equation*}
which is a contradiction, hence showing the result.
\end{proof}

\subsection{Proof of Proposition \ref{prop:generalConvResult}}
\label{proof:prop:generalConvResult}

\begin{proof}[Proof of Proposition \ref{prop:generalConvResult}]

We first give an intermediate result that will be used several times. Using Lemma \ref{lemma:sufficientDecrease}, we can get for any $k \in \mathbb{N}$ that 
\begin{equation}
    \label{eq:proof:prop:generalConvResult}
    F_{\pi}^{(\alpha)}(\theta_{k+1}) - F_{\pi}^{(\alpha)}(\theta_k) \leq - \frac{1}{\tau_{k+1}}d_A(\theta_k, \theta_{k+1}) - \left(\frac{1}{\tau_{k+1}} - 1 \right) d_A(\theta_{k+1}, \theta_k).
\end{equation}

$(i)$ Due to the non-negativity of the Bregman divergences and the hypothesis on the step-size $\tau_{k+1}$, the right-hand side of Eq.~\eqref{eq:proof:prop:generalConvResult} is non-negative, yielding the decrease property $F_{\pi}^{(\alpha)}(\theta_{k+1}) \leq F_{\pi}^{(\alpha)}(\theta_k)$.

$(ii)$ If $F_{\pi}^{(\alpha)}(\theta_{K+1}) = F_{\pi}^{(\alpha)}(\theta_K)$, then, using Lemma \ref{lemma:sufficientDecrease} and $\tau_{K+1} \leq 1$, $d_A(\theta_K, \theta_{K+1}) \leq 0$. By Proposition \ref{prop:LegendreFunctionsProperties}, this shows that $\theta_{K+1} = \theta_K$ and hence, that $\theta_K$ is a fixed point of Algorithm \ref{alg:idealizedBPG}. From Proposition \ref{prop:wellPosednessOperators}, it is a stationary point of $F_{\pi}^{(\alpha)}$.

$(iii)$ By summing Eq.~\eqref{eq:proof:prop:generalConvResult} over $k \in \llbracket 0, K \rrbracket$, one gets
\begin{equation*}
    \sum_{k=0}^K \left( \frac{1}{\tau_{k+1}}d_A(\theta_k, \theta_{k+1}) + \left(\frac{1}{\tau_{k+1}} - 1 \right) d_A(\theta_{k+1}, \theta_k) \right) \leq F_{\pi}^{(\alpha)}(\theta_0) - F_{\pi}^{(\alpha)}(\theta_{K+1}).
\end{equation*}
Because of the hypothesis on the step-sizes, one has for any $k \in \mathbb{N}$ that $1 \leq \frac{1}{\tau_{k+1}}$ and $0 \leq \frac{1}{\tau_{k+1}} - 1$, giving the inequality
\begin{equation*}
    \label{eq:sumInequalityProof}
    \sum_{k=0}^K d_A(\theta_k, \theta_{k+1}) \leq F_{\pi}^{(\alpha)}(\theta_0) - \vartheta_{\pi}^{(\alpha)}.
\end{equation*}
Since the right-hand side of the above is uniform in $K$, this implies the result.

$(iv)$ Suppose that $K$ is the first iterate such that $d_A(\theta_K, \theta_{K+1}) \leq \varepsilon$. Then for any $k \in \llbracket 0, K-1 \rrbracket$, $d_A(\theta_k ,\theta_{k+1}) > \varepsilon$. Consider Eq.~\eqref{eq:sumInequalityProof} where the sum goes only from $k =0$ to $k = K-1$, then one can show the desired result with
\begin{equation*}
    \varepsilon K \leq \sum_{k=0}^{K-1} d_A(\theta_k, \theta_{k+1}) \leq F_{\pi}^{(\alpha)}(\theta_0) - \vartheta_{\pi}^{(\alpha)}.
\end{equation*}

$(v)$ This proof relies on two notions of subdifferentials: the limiting subdifferential $\partial_L$ \cite[Chapter 6]{penot2013} and the Fréchet subdifferential $\partial_F$ \cite[Chapter 4]{penot2013}. Our working space $\mathcal{H}$ is a finite-dimensional Hilbert space, which is included in the setting of \cite{penot2013}.

Set $k \in \mathbb{N}$. Under Assumptions \ref{assumption:expFamily}, \ref{assumption:wellPosednessPRMM}, and \ref{assumption:regularizer2}, since $\theta_0 \in \interior \Theta$, Proposition \ref{prop:wellPosednessOperators} applies and thus $\theta_{k+1} = T_{\tau_{k+1} F_{\pi}^{(\alpha)}}^A(\theta_k)$. This implies that there exists $u_{k+1} \in \partial r(\theta_{k+1})$ such that
\begin{equation}
    \label{eq:optCondProof}
    \frac{1}{\tau_{k+1}}(\nabla A(\theta_{k+1}) - \nabla A(\theta_k)) + \nabla f_{\pi}^{(\alpha)}(\theta_k) + u_{k+1} = 0.
\end{equation}

Denote $\rho_{k+1} = \nabla f_{\pi}^{(\alpha)}(\theta_{k+1}) + u_{k+1} \in \nabla f_{\pi}^{(\alpha)}(\theta_{k+1}) + \partial r(\theta_{k+1})$. We then organize Equation \eqref{eq:optCondProof} as $\tau_{k+1}( \rho_{k+1} + \nabla f_{\pi}^{(\alpha)}(\theta_k) - \nabla f_{\pi}^{(\alpha)}(\theta_{k+1})) = \nabla A(\theta_k) - \nabla A(\theta_{k+1}))$. Using the triangle inequality and the additional assumption on the step sizes, we obtain that
\begin{equation}
    \| \rho_{k+1} \| \leq \| \nabla f_{\pi}^{(\alpha)}(\theta_{k+1}) - \nabla f_{\pi}^{(\alpha)}(\theta_k) \| + \frac{1}{\epsilon} \| \nabla A (\theta_{k+1}) - \nabla A (\theta_k) \|.
\end{equation}
By assumption, $\theta_{k+1}$ and $\theta_k$ belong to $C$, a compact set included in $\interior \Theta$. Since $\nabla^2 f_{\pi}^{(\alpha)}$ is continuous on $C$ (by Proposition \ref{prop:gradient_f_pi_alpha}) and $C$ is bounded, $\nabla f_{\pi}^{(\alpha)}$ is Lipschitz on $C$. The same reasoning applies for $\nabla A$. This shows that there exists a scalar $s > 0$ such that, for every $k \in \mathbb{N}$, there exists $\varrho_{k+1} \in \partial_F F_{\pi}^{(\alpha)}(\theta_{k+1})$ satisfying
\begin{equation}
    \| \varrho_{k+1} \| \leq s \| \theta_{k+1} - \theta_k \|.
\end{equation}
Now, we deduce from $(iii)$ that $d_A(\theta_{k+1}, \theta_k) \xrightarrow[k \rightarrow +\infty]{} 0$. Using Lemma \ref{lemma:sequentialConsistency}, this yields $\| \theta_{k+1} - \theta_k \| \xrightarrow[k \rightarrow +\infty]{} 0$, showing that the sequence $\{ \varrho_k \}_{k \in \mathbb{N}}$ is such that
\begin{equation}
    \varrho_k \in \partial_F F_{\pi}^{(\alpha)}(\theta_k),\, \forall k \in \mathbb{N},\text{ and } \varrho_k \xrightarrow[k \rightarrow +\infty]{} 0.
\end{equation}

We now turn to the second part of the result. Consider any limit point $\theta_{\text{lim}}$ of $\{ \theta_k \}_{k \in \mathbb{N}}$. This means that there exists a strictly increasing function $\varphi: \mathbb{N} \rightarrow \mathbb{N}$ such that $\theta_{\varphi(k)} \xrightarrow[k \rightarrow +\infty]{} \theta_{\text{lim}}$ and $\theta_{\text{lim}} \in C$ since $C$ is compact. Further, we have that $\rho_{k} \in \partial_F F_{\pi}^{(\alpha)}(\theta_{k})$ according to \cite[Corollary 4.35]{penot2013} and that the regularizing term $r$ is continuous on $C$ by assumption. Therefore, we have
\begin{equation*}
    \begin{cases}
        &\theta_{\varphi(k)} \xrightarrow[k \rightarrow +\infty]{} \theta_{\text{lim}},\\
        &F_{\pi}^{(\alpha)}(\theta_{\varphi(k)}) \xrightarrow[k \rightarrow +\infty]{} F_{\pi}^{(\alpha)}(\theta_{\text{lim}}),\\
        &\varrho_{\varphi(k)} \in \partial_F F_{\pi}^{(\alpha)}(\theta_{\varphi(k)}),\; \varrho_{\varphi(k)} \xrightarrow[k \rightarrow +\infty]{} 0.
    \end{cases}
\end{equation*}
By definition of the limiting subdifferential $\partial_L F_{\pi}^{(\alpha)}$ \cite[Defintion 6.1]{penot2013}, this shows the inclusion $0 \in \partial_L F_{\pi}^{(\alpha)}(\theta_{\text{lim}})$. Hence $\theta_{\text{lim}}$ is a stationary point of $F_{\pi}^{(\alpha)}$ using the characterization of $\partial_L F_{\pi}^{(\alpha)}$ given in \cite[Proposition 6.17]{penot2013}.\color{black}
\end{proof}

\subsection{Proof of Proposition \ref{prop:convResultAlpha1}}
\label{proof:prop:convResultAlpha1}

\begin{proof}[Proof of Proposition \ref{prop:convResultAlpha1}]
We first give a useful inequality to prove our results. Consider iteration $k$ of Algorithm \ref{alg:idealizedBPG}, and evaluate Eq. \eqref{eq:SuffDecreaseAlpha1} from Lemma \ref{lemma:sufficientDecrease} at $\theta' = \theta_*$, yielding
\begin{multline}
    \label{eq:decreaseThetaStar}
    \tau_{k+1} \left( F_{\pi}^{(\alpha)}(\theta_{k+1}) - F_{\pi}^{(\alpha)}(\theta_*) \right) \leq (1 - \tau_{k+1})d_A(\theta_*, \theta_k)\\ - (1-\tau_{k+1})d_A(\theta_{k+1}, \theta_k) - d_A(\theta_*, \theta_{k+1}).
\end{multline}

$(i)$ From Equation \eqref{eq:decreaseThetaStar}, we obtain, using the non-negativity of the Bregman divergence and the fact that $\tau_{k+1} \in (0,1)$, that $\tau_{k+1} \left( F_{\pi}^{(\alpha)}(\theta_{k+1}) - F_{\pi}^{(\alpha)}(\theta_*) \right) \leq d_A(\theta_k, \theta_*) - d_A(\theta_{k+1}, \theta_*)$. Summing this inequality over $k \in \llbracket 0, K \rrbracket$, we get
\begin{equation}
    \label{eq:boundValueKLcase}
    \sum_{k=0}^K \tau_{k+1} \left( F_{\pi}^{(\alpha)}(\theta_{k+1}) - F_{\pi}^{(\alpha)}(\theta_*) \right) \leq - d_A(\theta_*, \theta_{K+1}) + d_A(\theta_*, \theta_0) \leq d_A(\theta_*, \theta_0).
\end{equation}
The bound on the right-hand side of Equation \eqref{eq:boundValueKLcase} does not depend on $K$, and given our assumption on the sum of the step sizes, we deduce that $F_{\pi}^{(\alpha)}(\theta_k) \xrightarrow[k \rightarrow +\infty]{} F_{\pi}^{(\alpha)}(\theta_*)$.

Using Proposition \ref{prop:generalConvResult} (i), we obtain that for every $k \in \mathbb{N}$, $F_{\pi}^{(1)}(\theta_k) \leq F_{\pi}^{(1)}(\theta_0)$, meaning that the sequence $\{ \theta_k \}_{k \in \mathbb{N}}$ is contained in a sub-level set of $F_{\pi}^{(1)}$. $F_{\pi}^{(1)}$ is coercive under our assumptions from Proposition \ref{prop:ExistenceMinimizerAlpha1}, and it is lower semicontinuous from Proposition \ref{prop:convexityTheta}, so its sub-level sets are compact. This means that we can extract converging subsequences from $\{ \theta_k \}_{k \in \mathbb{N}}$. Consider now such a subsequence $\{ \theta_{\varphi(k)} \}_{k \in \mathbb{N}}$, with $\theta_{\varphi(k)} \xrightarrow[k \rightarrow +\infty]{} \theta_{\text{lim}}$. $F_{\pi}^{(1)}$ is lower semicontinuous, so
\begin{equation*}
    \liminf_{k \rightarrow +\infty} F_{\pi}^{(1)}(\theta_{\varphi(k)}) \geq F_{\pi}^{(1)}(\theta_{\text{lim}}).
\end{equation*}
However, because of the previous point, $\liminf F_{\pi}^{(1)}(\theta_{\varphi(k)}) = F_{\pi}^{(1)}(\theta_*)$, so we obtain that $F_{\pi}^{(1)}(\theta_{\text{lim}}) = F_{\pi}^{(1)}(\theta_*)$. The minimizer of $F_{\pi}^{(1)}$ is unique from Proposition \ref{prop:ExistenceMinimizerAlpha1}, showing that $\theta_{\text{lim}} = \theta_*$.

We have shown that $\{ \theta_k \}_{k \in \mathbb{N}}$ is contained in a compact set and that each of its converging subsequences converges to $\theta_*$, which implies the result.

$(ii)$ Since $\tau_{k+1} \in [\epsilon, 1]$,  $F_{\pi}^{(1)}(\theta_{k+1}) \geq F_{\pi}^{(1)}(\theta_*)$, and $d_A$ takes non-negative values (from Proposition \ref{prop:LegendreFunctionsProperties}), Eq. \eqref{eq:decreaseThetaStar} gives
\begin{equation}
    \label{eq:divergenceInequalityIntermediate}
    d_A(\theta_*, \theta_{k+1}) \leq (1 - \tau_{k+1})d_A(\theta_*, \theta_k),
\end{equation}
from which we deduce the results since $\tau_{k+1} \in [\epsilon,1]$.

$(iii)$ Since $ \tau_{k+1} \in [\epsilon, 1]$ and $d_A$ takes non-negative values, we get from Eq.~\eqref{eq:decreaseThetaStar} that
\begin{equation*}
    \tau_{k+1} \left( F_{\pi}^{(1)}(\theta_{k+1}) - F_{\pi}^{(1)}(\theta_*) \right) \leq (1 - \tau_{k+1}) d_A(\theta_*, \theta_k).
\end{equation*}
With Eq.~\eqref{eq:divergenceInequalityIntermediate} and the condition on $\tau_{k+1}$, we obtain
\begin{equation*}
    \left( F_{\pi}^{(1)}(\theta_{k+1}) - F_{\pi}^{(1)}(\theta_*) \right) \leq \frac{1}{\epsilon} d_A(\theta_*, \theta_{k+1}),
\end{equation*}
from which we conclude using point $(ii)$ and Proposition \ref{prop:ExistenceMinimizerAlpha1}.
\end{proof}

\subsection{Proof of Proposition \ref{prop:convResultSameFamily}}
\label{proof:prop:convResultSameFamily}

\begin{proof}[Proof of Proposition \ref{prop:convResultSameFamily}]
We first prove that we can apply the results of Proposition \ref{prop:generalConvResult}. Assumption \ref{assumption:wellPosednessPRMM} holds because of the result of Proposition \ref{prop:targetSameFamily}. Since $r \equiv 0$, Assumption \ref{assumption:regularizer2} holds. Therefore, all the hypotheses of Proposition \ref{prop:generalConvResult} hold, showing the result.

$(i)$ Because of Proposition \ref{prop:generalConvResult}, every converging subsequence of $\{ \theta_k \}_{k \in \mathbb{N}}$ converges to $S_{\pi}^{(\alpha)}$. However, $S_{\pi}^{(\alpha)} = \{ \theta_{\pi}\}$ in our case (see Proposition \ref{prop:targetSameFamily}). Therefore, all the converging subsequences of $\{ \theta_k \}_{k \in \mathbb{N}}$ converge to $\theta_{\pi}$, showing that the sequence of iterates converges to $\theta_{\pi}$.

$(ii)$ Due to the smoothness of $f_{\pi}^{(\alpha)}$ relatively to $A$ shown in Proposition \ref{prop:relativeProperties}, we have for any $k \in \mathbb{N}$ that
\begin{equation}
    \label{eq:descentSameFamily}
    f_{\pi}^{(\alpha)}(\theta_{k+1}) \leq f_{\pi}^{(\alpha)}(\theta_k) + \langle \nabla f_{\pi}^{(\alpha)}(\theta_k), \theta_{k+1} - \theta_k \rangle + d_A(\theta_{k+1}, \theta_k).
\end{equation}

Since $r \equiv 0$, we have the relation $\nabla A(\theta_{k+1}) = \nabla A(\theta_k) - \tau_{k+1} \nabla f_{\pi}^{(\alpha)}(\theta_k)$, therefore, Eq.~\eqref{eq:descentSameFamily} reads 
\begin{equation}
    f_{\pi}^{(\alpha)}(\theta_{k+1}) \leq f_{\pi}^{(\alpha)}(\theta_k) - \frac{1}{\tau_{k+1}} \langle \nabla A(\theta_{k+1}) - \nabla A(\theta_k), \theta_{k+1} - \theta_k \rangle + d_A(\theta_{k+1}, \theta_k).
\end{equation}
Since $\tau_{k+1} \leq 1$ and $\langle \nabla A(\theta_{k+1}) - \nabla A(\theta_k), \theta_{k+1} - \theta_k \rangle = d_A(\theta_{k+1}, \theta_k) + d_A(\theta_{k}, \theta_{k+1})$, we get
\begin{align*}
    f_{\pi}^{(\alpha)}(\theta_{k+1}) &\leq f_{\pi}^{(\alpha)}(\theta_k) - \frac{1}{\tau_{k+1}} \left( d_A(\theta_k, \theta_{k+1}) + d_A(\theta_{k+1}, \theta_k) \right) + \frac{1}{\tau_{k+1}}d_A(\theta_{k+1}, \theta_k) \\
    &= f_{\pi}^{(\alpha)}(\theta_k) - \frac{1}{\tau_{k+1}} d_{A^*}(\nabla A(\theta_{k+1}), \nabla A(\theta_k)).
\end{align*}

Now, we consider some $\upsilon > 0$. Since $\theta_k \rightarrow \theta_{\pi}$, there exists $K \in \mathbb{N}$ such that for any $k \geq K$, $\theta_k \in B(\theta_{\pi}, \upsilon)$ and $\nabla A(\theta_k) \in B(\nabla A(\theta_{\pi}), \upsilon)$ (recall that $\nabla A$ is continuous on $\interior \Theta$). Thus, we can write following the same steps as in the proof of Proposition \ref{prop:PolyakInequality} that for any $k \geq K$,
\begin{equation*}
    d_{A^*}(\nabla A(\theta_{k+1}), \nabla A(\theta_k)) = \frac{1}{2} \| \nabla A(\theta_{k+1}) - \nabla A(\theta_k) \|_{\nabla^2 A^*(\nabla A(\theta_{\pi}))}^2 + o(\upsilon^2).
\end{equation*}
From there, we obtain that
\begin{align*}
    d_{A^*}(\nabla A(\theta_{k+1}), \nabla A(\theta_k)) &= \frac{\tau_{k+1}^2}{2} \| \nabla f_{\pi}^{(\alpha)}(\theta_k) \| _{\nabla^2 A^*(\nabla A(\theta_{\pi}))}^2 + o(\upsilon^2)\\
    &\geq \tau_{k+1}^2 \alpha f_{\pi}^{(\alpha)}(\theta_k) + o(\upsilon^2),
\end{align*}
where the last inequality comes from Proposition \ref{prop:PolyakInequality}. With our previous point, this yields
\begin{align*}
    f_{\pi}^{(\alpha)}(\theta_{k+1}) &\leq (1 - \alpha \tau_{k+1}) f_{\pi}^{(\alpha)}(\theta_k) + o(\upsilon^2)\\
    &< (1 - \alpha \delta\epsilon) f_{\pi}^{(\alpha)}(\theta_k) + o(\upsilon^2)
\end{align*}
for any constant $\delta \in (0,1)$. By choosing $\delta$ or $\upsilon$ small enough, we finally obtain
\begin{equation*}
    f_{\pi}^{(\alpha)}(\theta_{k+1}) \leq (1 - \alpha \delta \epsilon) f_{\pi}^{(\alpha)}(\theta_k).
\end{equation*}
This means that for any $k \geq K$, we have $f_{\pi}^{(\alpha)}(\theta_k) \leq (1 - \alpha \delta \epsilon)^{k - K} f_{\pi}^{(\alpha)}(\theta_K)$. Since the sequence $\{ f_{\pi}^{(\alpha)}(\theta_k)\}_{k \in \mathbb{N}}$ is decreasing, $f_{\pi}^{(\alpha)}(\theta_K) \leq f_{\pi}^{(\alpha)}(\theta_0)$, which shows the result with $M = (1 - \alpha \delta \epsilon)^{- K}$.
\end{proof}

\subsection{Proof of Proposition \ref{prop:cvgceStochasticGeneral}}

\begin{proof}[Proof of Proposition \ref{prop:cvgceStochasticGeneral}]
$(i)$ At every iteration $k \in \mathbb{N}$, we have that $\nabla A(\theta_{k+1}) = \nabla A(\theta_k) - \tau_{k+1} ( \nabla f_{\pi}^{(\alpha)}(\theta_k) + n_{k+1} + u_{k+1})$ where $u_{k+1} \in \partial r(\theta_{k+1})$ and 
\begin{equation*}
    n_{k+1} = \sum_{n=1}^{N_{k+1}} \bar{w}_n^{(\alpha)} \Gamma(x_n) - \pi_{\theta_k}^{(\alpha)}(\Gamma).
\end{equation*}
Therefore, we have that
\begin{equation}
    \tau_{k+1} \langle \nabla f_{\pi}^{(\alpha)}(\theta_k) + n_{k+1} + u_{k+1}, \theta_{k+1} - \theta_k \rangle = \langle \nabla A(\theta_k) - \nabla A(\theta_{k+1}), \theta_{k+1} - \theta_k \rangle.
\end{equation}
Using the three-points equality \cite[Lemma 2.2]{teboulle2018}, we obtain that
\begin{equation}
    \label{eq:inequalityBiasDescent1}
    d_A(\theta_k, \theta_{k+1}) = \tau_{k+1} \langle \nabla f_{\pi}^{(\alpha)}(\theta_k) + n_{k+1} + u_{k+1}, \theta_k - \theta_{k+1} \rangle - d_A(\theta_{k+1}, \theta_k).
\end{equation}
We now bound the terms appearing in the right-hand side of Equation \eqref{eq:inequalityBiasDescent1}.

First, $r$ is convex from Assumption \ref{assumption:regularizer2} and $u_{k+1} \in \partial r(\theta_{k+1})$, so we have $\tau_{k+1} \langle u_{k+1}, \theta_k - \theta_{k+1} \rangle \leq \tau_{k+1}(r(\theta_k) - r(\theta_{k+1}))$.

Second, $f_{\pi}^{(\alpha)}$ is $1$-relatively smooth from Proposition \ref{prop:relativeProperties}, ensuring that $\langle \nabla f_{\pi}^{(\alpha)}(\theta_k), \theta_k - \theta_{k+1} \rangle \leq d_A(\theta_{k+1}, \theta_k) - f_{\pi}^{(\alpha)}(\theta_{k+1}) + f_{\pi}^{(\alpha)}(\theta_k)$.

We thus get from Equation \eqref{eq:inequalityBiasDescent1} and the two previously stated facts that
\begin{equation}
    d_A(\theta_k, \theta_{k+1}) \leq -(1-\tau_{k+1}) d_A(\theta_{k+1}, \theta_k) + \tau_{k+1} (F_{\pi}^{(\alpha)}(\theta_k) - F_{\pi}^{(\alpha)}(\theta_{k+1})) +\tau_{k+1} \langle n_{k+1}, \theta_k - \theta_{k+1} \rangle.
\end{equation}
Now, since $\tau_{k+1} \in (0,1]$ and the Bregman divergences take non-negative values, $-(1-\tau_{k+1}) d_A(\theta_{k+1}, \theta_k) \leq 0$. Using the Cauchy-Schwarz inequality and leveraging that $\tau_{k+1} \leq 1$, we further obtain
\begin{equation}
    \label{eq:noisyDescentIntermediate}
    d_A(\theta_k, \theta_{k+1}) \leq F_{\pi}^{(\alpha)}(\theta_k) - F_{\pi}^{(\alpha)}(\theta_{k+1}) + \| n_{k+1} \| \| \theta_k - \theta_{k+1} \|.
\end{equation}
Remark that the iterates staying almost surely in the compact $C$ ensures that $\| \theta_k - \theta_{k+1} \| \leq \sup_{\theta_1, \theta_2 \in C} \| \theta_1 - \theta_2 \| := \diam C < +\infty$.

We now introduce for any $k \in \mathbb{N}$ the filtration $\mathcal{F}_{k+1}$, which is the $\sigma$-algebra defined by $\mathcal{F}_{k+1} = \sigma(\theta_0, \{x_n^1\}_{n=1}^{N_1}, \dots, \theta_k, \{x_n^{k+1}\}_{n=1}^{N_{k+1}})$. Taking expectation with respect to $F_{k+1}$ in Equation \eqref{eq:noisyDescentIntermediate} yields
\begin{equation}
    \mathbb{E}[ d_A(\theta_k, \theta_{k+1}) | \mathcal{F}_{k+1}] \leq F_{\pi}^{(\alpha)}(\theta_k) - \mathbb{E} [ F_{\pi}^{(\alpha)}(\theta_{k+1}) | \mathcal{F}_{k+1}] + (\diam C) \mathbb{E} [\| n_{k+1} \| | \mathcal{F_{k+1}}].
\end{equation}
Using Jensen's inequality (using that the square root is concave), we obtain the bound $\mathbb{E}[\|n_{k+1} \| | \mathcal{F}_{k+1}] \leq \sqrt{\mathbb{E}[\|n_{k+1} \|^2 | \mathcal{F}_{k+1}]}$. Because of Assumption \ref{assumption:bias} on the sampling procedure and the fact that the iterates are bounded, we finally obtain, by taking expectation, that
\begin{equation}
    \mathbb{E}[d_A(\theta_k, \theta_{k+1})] \leq \mathbb{E}[ F_{\pi}^{(\alpha)}(\theta_k)] - \mathbb{E}[F_{\pi}^{(\alpha)}(\theta_{k+1})] + \frac{M}{\sqrt{N_{k+1}}},
\end{equation}
with $M = \sqrt{\sup_{C} E_{\pi, \mathcal{Q}}^{(\alpha)}} \diam C \in (0,+\infty)$. Now, summing the above for $k \in \llbracket 0, K \rrbracket$ yields
\begin{equation}
    \sum_{k=0}^K \mathbb{E}[d_A(\theta_k, \theta_{k+1})] \leq \mathbb{E}[F_{\pi}^{(\alpha)}(\theta_0)] - \inf_C F_{\pi}^{(\alpha)} + M \sum_{k\geq 0} \frac{1}{\sqrt{N_{k+1}}}
\end{equation}
using the assumption on the sample sizes as well. This ensures that $\mathbb{E}\left[\sum_{k \geq 0} d_A(\theta_k, \theta_{k+1}) \right] < +\infty$. In particular, $\mathbb{E}[d_A(\theta_k, \theta_{k+1}) ] \xrightarrow[k \rightarrow +\infty]{} 0$, so we obtain the convergence in probability from Markov's inequality. 

$(ii)$ Since the iterates are supposed to stay in $C \subset \interior \Theta$, we can write, as in the proof of Proposition \ref{prop:generalConvResult} $(v)$, that there exists $s >0$ satisfying
\begin{equation}
    \| \rho_{k+1} \| \leq  s \| \theta_{k+1} - \theta_k \| + \| n_{k+1} \|
\end{equation}
for any $k \in \mathbb{N}$. Taking expectation with respect to the filtration $\mathcal{F}_{k+1}$ and leveraging Assumption \ref{assumption:bias}  as in the proof of $(i)$ then gives
\begin{equation}
    \mathbb{E}[ \| \rho_{k+1} \|] \leq s \mathbb{E}[\| \theta_{k+1} - \theta_k \|] + \frac{\sqrt{\sup_C E_{\pi, \mathcal{Q}}^{(\alpha)}}}{\sqrt{N_{k+1}}}.
\end{equation}

From $(i)$, and using Lemma \ref{lemma:sequentialConsistency}, we get that $\mathbb{E}[\| \theta_{k+1} - \theta_k \|] \xrightarrow[k \rightarrow +\infty]{} 0$. Due to the assumption on the step sizes, we also have that $\frac{1}{\sqrt{N_{k+1}}} \xrightarrow[k \rightarrow +\infty]{} 0$. Finally, Assumption \ref{assumption:bias} on the sampling procedure ensures that $\sup_C E_{\pi, \mathcal{Q}}^{(\alpha)} < +\infty$. These facts imply that $\mathbb{E}_[ \| \rho_{k+1} \| ] \xrightarrow[k \rightarrow +\infty]{} 0$. Using Markov's inequality, we finally obtain that $\{ \rho_k \}_{k \in \mathbb{N}}$ converges in probability to $0$, with $\rho_k \in f_{\pi}^{(\alpha)}(\theta_k) + \partial r(\theta_k)$ for every $k \in \mathbb{N}$. 
\end{proof}

\subsection{Proof of Proposition \ref{prop:cvgceSotchasticExpectationAlpha1}}

\begin{proof}[Proof of Proposition \ref{prop:cvgceSotchasticExpectationAlpha1}]
$(i)$ At every iteration $k \in \mathbb{N}$, we have that $\nabla A(\theta_{k+1}) = \nabla A(\theta_k) - \tau_{k+1} ( \nabla f_{\pi}^{(\alpha)}(\theta_k) + n_{k+1} + u_{k+1})$ where $u_{k+1} \in \partial r(\theta_{k+1})$ and 
\begin{equation*}
    n_{k+1} = \sum_{n=1}^{N_{k+1}} \bar{w}_n^{(\alpha)} \Gamma(x_n) - \pi_{\theta_k}^{(\alpha)}(\Gamma).
\end{equation*}
Therefore, we have for any $\theta \in \interior \Theta$ that
\begin{equation}
    \tau_{k+1} \langle \nabla f_{\pi}^{(\alpha)}(\theta_k) + n_{k+1} + u_{k+1}, \theta_{k+1} - \theta \rangle = \langle \nabla A(\theta_k) - \nabla A(\theta_{k+1}), \theta_{k+1} - \theta \rangle.
\end{equation}
Using the three-points equality \cite[Lemma 2.2]{teboulle2018}, we obtain that
\begin{equation}
    \label{eq:inequalityBiasDescent}
    d_A(\theta, \theta_{k+1}) = d_A(\theta, \theta_k) + \tau_{k+1} \langle \nabla f_{\pi}^{(\alpha)}(\theta_k) + n_{k+1} + u_{k+1}, \theta - \theta_{k+1} \rangle - d_A(\theta_{k+1}, \theta_k).
\end{equation}
We now bound the terms appearing in the right-hand side of Equation \eqref{eq:inequalityBiasDescent}.

First, because of Assumption \ref{assumption:regularizer2} on $r$ and because $u_{k+1} \in \partial r(\theta_{k+1})$, we have that $\tau_{k+1} \langle u_{k+1}, \theta - \theta_{k+1} \rangle \leq \tau_{k+1} (r(\theta) - r(\theta_{k+1}) )$.

Second, we write $\langle \nabla f_{\pi}^{(\alpha)}(\theta_k), \theta - \theta_{k+1} \rangle  = \langle \nabla f_{\pi}^{(\alpha)}(\theta_k), \theta - \theta_k \rangle - \langle \nabla f_{\pi}^{(\alpha)}(\theta_k), \theta_{k+1} - \theta_k \rangle$. On the one hand, $f_{\pi}^{(1)}$ is $1$-relatively strongly convex from Proposition \ref{prop:relativeProperties}, so
\begin{equation*}
    \langle \nabla f_{\pi}^{(\alpha)}(\theta_k), \theta - \theta_k \rangle \leq f_{\pi}^{(\alpha)}(\theta) - f_{\pi}^{(\alpha)}(\theta_k) - d_A(\theta, \theta_k).
\end{equation*}
On the other hand, $f_{\pi}^{(1)}$ is also $1$-relatively smooth from Proposition \ref{prop:relativeProperties}, allowing to write
\begin{equation*}
    -\langle \nabla f_{\pi}^{(\alpha)}(\theta_k), \theta_{k+1} - \theta_k \rangle \leq d_A(\theta_{k+1}, \theta_k) - f_{\pi}^{(\alpha)}(\theta_{k+1}) + f_{\pi}^{(\alpha)}(\theta_k).
\end{equation*}
We thus get that
\begin{equation*}
    \tau_{k+1}\langle \nabla f_{\pi}^{(\alpha)}(\theta_k), \theta - \theta_{k+1} \rangle \leq \tau_{k+1}(f_{\pi}^{(\alpha)}(\theta) - f_{\pi}^{(\alpha)}(\theta_{k+1})) - \tau_{k+1}d_A(\theta, \theta_k) + \tau_{k+1}d_A(\theta_{k+1}, \theta_k).
\end{equation*}

Gathering these results concerning the terms involving $r$ and the terms involving $f_{\pi}^{(\alpha)}$, we thus obtain from Equation \eqref{eq:inequalityBiasDescent} that
\begin{multline}
    \label{eq:biasedDescentLemma}
    d_A(\theta, \theta_{k+1}) \leq (1-\tau_{k+1}) d_A(\theta, \theta_k) + \tau_{k+1}(F_{\pi}^{(\alpha)}(\theta) - F_{\pi}^{(\alpha)}(\theta_{k+1})) + \tau_{k+1}\langle n_{k+1}, \theta - \theta_{k+1} \rangle\\ - (1 - \tau_{k+1}) d_A(\theta_{k+1}, \theta_k).
\end{multline}

We now evaluate Equation \eqref{eq:biasedDescentLemma} at $\theta = \theta_*$, giving that $F_{\pi}^{(\alpha)}(\theta_*) - F_{\pi}^{(\alpha)}(\theta_{k+1}) \leq 0$. We also notice that the assumption on the step sizes give $(1 - \tau_{k+1}) d_A(\theta_{k+1}, \theta_k) \geq 0$. These two remarks give a simplified version of Equation \eqref{eq:biasedDescentLemma} that reads as
\begin{equation}
    d_A(\theta_*, \theta_{k+1}) \leq (1-\tau_{k+1}) d_A(\theta_*, \theta_k) + \tau_{k+1} \langle n_{k+1}, \theta_* - \theta_{k+1} \rangle.
\end{equation}
Using the Cauchy-Schwarz inequality, we further obtain
\begin{equation}
    d_A(\theta_*, \theta_{k+1}) \leq (1-\tau_{k+1}) d_A(\theta_*, \theta_k) + \tau_{k+1} \| n_{k+1} \| \|\theta_* - \theta_{k+1}\|.
\end{equation}
Because the iterates remain inside the compact set $C$ almost surely, we have with probability one that
\begin{equation}
    d_A(\theta_*, \theta_{k+1}) \leq (1-\tau_{k+1}) d_A(\theta_*, \theta_k) + \tau_{k+1}  \sup_C \|\theta_* - \cdot\| \| n_{k+1} \|.
\end{equation}
We will now handle the noise. Consider $\mathcal{F}_{k+1}$ the filtration defined in the proof of Proposition \ref{prop:cvgceStochasticGeneral}. Then, taking expectation yields
\begin{equation}
    \mathbb{E}[ d_A(\theta_*, \theta_{k+1}) | \mathcal{F}_{k+1}] \leq (1-\tau_{k+1}) d_A(\theta_*, \theta_k) + \tau_{k+1}\sup_C \|\theta_* - \cdot\| \mathbb{E}[\|n_{k+1} \| | \mathcal{F}_{k+1}].
\end{equation}
With Jensen's inequality, we obtain the bound $\mathbb{E}[\|n_{k+1} \| | \mathcal{F}_{k+1}] \leq \sqrt{\mathbb{E}[\|n_{k+1} \|^2 | \mathcal{F}_{k+1}]}$. This allows us to apply Assumption \ref{assumption:bias} to obtain that $\mathbb{E}[\|n_{k+1} \| | \mathcal{F}_{k+1}] \leq \sqrt{\frac{E_{\pi, \mathcal{Q}}^{(\alpha)}(\theta_{k+1})}{N_{k+1}}}$. Since $E_{\pi, \mathcal{Q}}^{(\alpha)}$ is locally bounded on $\interior \Theta$ and the iterates stay in the compact $C$ with probability one, we finally obtain that
\begin{equation}
    \label{eq:descentMartingale}
    \mathbb{E}[ d_A(\theta_*, \theta_{k+1}) | \mathcal{F}_{k+1}] \leq (1-\tau_{k+1}) d_A(\theta_*, \theta_k) + \frac{\tau_{k+1}}{\sqrt{N_{k+1}}}\sup_C \|\theta_* - \cdot\| \sqrt{\sup_C E_{\pi, \mathcal{Q}}^{(\alpha)}}
\end{equation}
with probability one. We define $M = \sup_C \|\theta_* - \cdot\| \sqrt{\sup_C E_{\pi, \mathcal{Q}}^{(\alpha)}} \in (0, +\infty)$. We then get the result by taking expectation in the above and applying this inequality for every iteration.

$(ii)$ Using that $\tau_{k+1} \leq 1$, we rewrite Eq.~\eqref{eq:descentMartingale} as
\begin{equation}
    \mathbb{E}[ d_A(\theta_*, \theta_{k+1}) | \mathcal{F}_{k+1}] \leq d_A(\theta_*, \theta_k) - \tau_{k+1} d_A(\theta_*, \theta_k) + \frac{M}{\sqrt{N_{k+1}}}.
\end{equation}
Under Assumption \ref{assumption:bias}, \cite[Theorem 1]{robbins1971} shows that $\{d_A(\theta_*, \theta_k)\}_{k \in \mathbb{N}}$ converges almost surely to a non-negative random variable and that $\sum_{k \geq 0} \tau_{k+1} d_A(\theta_*, \theta_k) < +\infty$ almost surely. Due to the step sizes not being summable, this implies that $d_A(\theta_*, \theta_k) \xrightarrow[k \rightarrow +\infty]{a.s.} 0$. We then get the result from Lemma \ref{lemma:sequentialConsistency}.
\end{proof}

\section{Computation of a Bregman proximal operator}
\label{section:appendixProxComputations}

Consider an orthonormal matrix $Q$ and the family of Gaussian distribution with covariance of the form $\Sigma = Q \diag(\sigma_1^2, ..., \sigma_d^2) Q^{\top}$ and mean $\mu \in \mathbb{R}^d$. It is an exponential family with parameters $\theta = (\theta_1, \theta_2)^{\top}$, with $\theta_1 = \diag(\frac{1}{\sigma_1^2}, ...,\frac{1}{\sigma_d^2}) Q^{\top} \mu$ and $\theta_2 = -(\frac{1}{2\sigma_1^2},...,\frac{1}{2\sigma_d^2})^{\top}$. Its sufficient statistics is $\Gamma(x) = (Q^{\top}x, (Q^{\top}x_1)^2,...,(Q^{\top}x_d)^2)$. Its log-partition function is
\begin{equation*}
    A(\theta) = -\frac{1}{4} \theta_1^{\top} (\diag(\theta_2))^{-1} \theta_1 + \frac{d}{2} \log( 2\pi) - \frac{1}{2} \sum_{i=1}^d \log(-2(\theta_2)_i),
\end{equation*}
and its natural parameters $\nabla A(\theta)$ are $Q^{\top}\mu$ and $((Q^{\top}\mu)_1^2 + \sigma_1^2,...,(Q^{\top}\mu)_d^2 + \sigma_d^2)^{\top}$.

We consider a regularizer that enforces sparsity on some components of the mean. We propose to this end
\begin{equation}
    \label{eq:regularizerSparsityMean}
    r(\theta) = \sum_{i = 1}^d  \eta_i \left| (\theta_1)_i \right|,
\end{equation}
where $\eta_i \geq 0$ for $i \in \llbracket 1,d \rrbracket$.

Since $\sigma_i^2 > 0$ for all $i \in \llbracket 1, d \rrbracket$, having a null component in $\theta_1$ means that $Q^{\top} \mu$ has a null component, promoting sparsity in $Q^{\top}\mu$. We aim at computing $\breve{\theta} = \prox_{\tau r}^A(\theta)$.

\begin{lemma}
    Consider the Gaussian family defined above. Consider $q_{\theta}$ in this family, with $\theta \in \interior \Theta$ and whose mean and covariance are respectively $\mu$ and $Q \diag(\sigma_1^2,...,\sigma_d^2) Q^{\top}$. 
    If we consider the regularizing function defined in Eq. \eqref{eq:regularizerSparsityMean}, then $\breve{\theta} = \prox_{\tau r}^A(\theta)$ is such that the mean $\breve{\mu}$ and covariance $Q \diag( \breve{\sigma}_1^2,...,\breve{\sigma}_d^2) Q^{\top}$ of $q_{\breve{\theta}}$ satisfy for any $i \in \llbracket 1,d \rrbracket$
    \begin{align*}
        (Q^{\top}\breve{\mu})_i &= 
        \begin{cases}
        0 &\text{ if } (Q^{\top}\mu)_i \in [-\tau \eta_i, \tau \eta_i],\\
        -\tau \eta_i + (Q^{\top}\mu)_i &\text{ if } (Q^{\top}\mu)_i > \tau \eta_i,\\
        \tau \eta_i + (Q^{\top}\mu)_i &\text{ if } (Q^{\top}\mu)_i < -\tau \eta_i.
        \end{cases}\\
        \breve{\sigma}_i^2 &= (\sigma_i)^2 + ((Q^{\top}\mu)_i^2 - (Q^{\top}\breve{\mu})_i^2).
    \end{align*}
\end{lemma}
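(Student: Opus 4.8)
The plan is to characterize $\breve{\theta} = \prox_{\tau r}^A(\theta)$ through the first-order optimality conditions of the convex problem defining it. By Definition~\ref{def:BPGoperators}, $\breve{\theta}$ minimizes $\theta' \mapsto r(\theta') + \frac{1}{\tau} d_A(\theta', \theta)$ over $\domain A$. Since $A$ is convex (Proposition~\ref{prop:convexityTheta}), the map $\theta' \mapsto d_A(\theta', \theta)$ is convex and differentiable with gradient $\nabla A(\theta') - \nabla A(\theta)$, while $r$ is convex by Assumption~\ref{assumption:regularizer2}; the operator is moreover well-defined and single-valued on $\interior \Theta$ by Proposition~\ref{prop:wellPosednessOperators}(ii). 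Fermat's rule \cite[Theorem 16.2]{bauschke2011} together with the subdifferential sum rule \cite[Corollary 16.38]{bauschke2011} then show that $\breve{\theta}$ is the unique point of $\interior \Theta$ satisfying
$$0 \in \partial r(\breve{\theta}) + \frac{1}{\tau}\bigl(\nabla A(\breve{\theta}) - \nabla A(\theta)\bigr).$$

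Next I would exploit the block structure. Writing $m := Q^\top \mu$ and $\breve{m} := Q^\top \breve{\mu}$, the natural parameters recalled before the statement read $\nabla A(\theta) = (m, (m_i^2 + \sigma_i^2)_i)$ and $\nabla A(\breve{\theta}) = (\breve{m}, (\breve{m}_i^2 + \breve{\sigma}_i^2)_i)$. The regularizer $r(\theta') = \sum_i \eta_i |(\theta'_1)_i|$ depends only on $\theta'_1$ and is separable, so $\partial r(\breve{\theta})$ has a zero second block and an $i$-th first-block entry lying in $\eta_i \partial|\cdot|((\breve{\theta}_1)_i)$. The second block of the optimality condition therefore gives, for each $i$, the identity $\breve{m}_i^2 + \breve{\sigma}_i^2 = m_i^2 + \sigma_i^2$, which is exactly the stated formula $\breve{\sigma}_i^2 = \sigma_i^2 + (m_i^2 - \breve{m}_i^2)$: the second-moment natural parameter is left unchanged.

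The core of the argument is the first block, which for each $i$ reads $0 \in \eta_i \partial|\cdot|((\breve{\theta}_1)_i) + \frac{1}{\tau}(\breve{m}_i - m_i)$, a condition coupling the penalized variable $(\breve{\theta}_1)_i$ to the natural parameter $\breve{m}_i$. The key observation I would use is that $\breve{m}_i = (Q^\top \breve{\mu})_i = \breve{\sigma}_i^2 (\breve{\theta}_1)_i$ with $\breve{\sigma}_i^2 > 0$, so $(\breve{\theta}_1)_i$ and $\breve{m}_i$ share the same sign and vanish simultaneously; consequently $\partial|\cdot|((\breve{\theta}_1)_i) = \partial|\cdot|(\breve{m}_i)$ as subsets of $[-1,1]$. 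Substituting this identity turns the condition into $0 \in \eta_i \partial|\cdot|(\breve{m}_i) + \frac{1}{\tau}(\breve{m}_i - m_i)$, which is precisely the optimality condition of the one-dimensional Euclidean proximal problem $\min_u \eta_i |u| + \frac{1}{2\tau}(u - m_i)^2$. Solving it through the soft-thresholding formula yields the three-case expression for $(Q^\top \breve{\mu})_i$ in the statement, and combined with the second-block relation this completes the proof.

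I expect the sign-preservation step to be the main obstacle to make rigorous: the penalty acts on $\theta_1$, while the proximal geometry produces a shift of the natural parameter $Q^\top \mu$, and the two variables are related nonlinearly through the unknown $\breve{\sigma}_i^2$. The argument that the subdifferential of the absolute value is invariant under this positive rescaling---so that the thresholding can be read off directly in the natural parameter---is what unlocks the closed form; everything else is bookkeeping on the block structure of $\nabla A$.
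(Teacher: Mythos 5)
Your proposal is correct and follows essentially the same route as the paper's proof: componentwise optimality conditions, the second natural-parameter block giving $\breve{\sigma}_i^2 = \sigma_i^2 + (Q^{\top}\mu)_i^2 - (Q^{\top}\breve{\mu})_i^2$ directly, and the observation that $(\breve{\theta}_1)_i = \breve{\sigma}_i^{-2}(Q^{\top}\breve{\mu})_i$ with $\breve{\sigma}_i^2 > 0$ shares the sign of $(Q^{\top}\breve{\mu})_i$, so the first-block condition reduces to soft-thresholding in the natural parameter. The sign-preservation step you flag as the main obstacle is exactly the one-line argument the paper uses, so nothing further is needed.
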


Consider $i \in \llbracket 1,d \rrbracket$. In the particular case where $\eta_i = 0$, then $\mu_i^* = \mu_i$ and $\breve{\sigma}_i^2 = (\sigma_i)^2$. We can also remark that we always have $\breve{\sigma}_i^2 \geq \sigma_i^2$, with equality if and only if $(Q^{\top}\mu)_i = 0$. Therefore, the operator $\prox_{\tau r}^A$ modifies $q_{\theta}$ by shrinking certain values of the mean to zero, but it increases the variance. In particular, the bigger the $(Q^{\top}\mu)_i$, the bigger the variance increase.

When $Q = I$, the exponential family is the family of Gaussian distributions with diagonal covariance. The above results can thus be applied to this family too.

\begin{proof}
The regularizing function $r$ is separable, so we study the optimality condition for every $i \in \llbracket 1,d \rrbracket$. This is justified by \cite[Proposition 16.8]{bauschke2011}, which shows that $\partial r(\theta)$ is the Cartesian product of its subdifferentials with respect to each of its variable. Therefore, for $i \in \llbracket 1,d \rrbracket$, we have
\begin{equation*}
    \begin{cases}
    \frac{1}{\tau} ((Q^{\top}\mu)_i - (Q^{\top}\breve{\mu})_i) &\in \eta_i \partial  | \cdot | ((\breve{\theta}_1)_i),\\
    \frac{1}{\tau} ((Q^{\top}\mu)_i^2 + \sigma_i^2 -( (Q^{\top}\breve{\mu})_i^2 + \breve{\sigma}_i^2)) &= 0,
    \end{cases}
\end{equation*}
from which we already deduce the result about the standard deviation.

Because $(\breve{\Sigma}_i)^2 > 0$, the sign of $(\breve{\theta}_1)_i = \frac{1}{(\breve{\Sigma}_i)^2} (Q^{\top}\breve{\mu})_i$ is the sign of $(Q^{\top}\breve{\mu})_i$ and we get that
\begin{equation*}
    (Q^{\top}\mu)_i - (Q^{\top}\breve{\mu})_i \in 
    \begin{cases}
    [-\tau \eta_i, \tau \eta_i] &\text{ if } (Q^{\top}\breve{\mu})_i = 0,\\
    \{ \tau \eta_i \} &\text{ if } (Q^{\top}\breve{\mu})_i > 0,\\
    \{ -\tau \eta_i\} &\text{ if } (Q^{\top}\breve{\mu})_i < 0,
    \end{cases}
\end{equation*}
from which we can obtain the result.
\end{proof}

\section{Supplementary numerical experiments}
\label{section:suppNumericalExperiments}

\subsection{Understanding the influence of the parameters}
\label{subsection:parametersInfluence}

To this end, we use Gaussian targets in various dimensions $d$, with unnormalized density of the form
\begin{equation}
    \label{eq:GaussianTarget}
    \tilde{\pi}(x) = \exp \left(- \frac{1}{2}(x-\bar{\mu})^{\top} \bar{\Sigma}_{\kappa}^{-1} (x-\bar{\mu}) \right), \, \forall x \in \mathbb{R}^d.
\end{equation}
Their means $\bar{\mu}$ are chosen uniformly in $[-0.5, 0.5]^d$ and their covariance matrices $\bar{\Sigma}_{\kappa}$ are chosen with a condition number equal to $\kappa$, following the procedure in \cite[Section 5]{more1989}.

We now discuss the influence of $(\alpha, \tau)$ on the practical speed and robustness of Algorithm \ref{alg:MCrelaxedMomentMatching}, in its non-regularized version RMM. We recall that this algorithm resorts to importance sampling to approximate the integrals involved in the computation of $\pi_{\theta}^{(\alpha)}(\Gamma)$, which creates an approximation error linked with the sample size, $N$. The influence of $\tau$ can be understood through the theory on stochastic Bregman gradient descent with fixed step-size. In particular, \cite[Theorem 5.3]{hanzely2021} states that such methods converge to a neighborhood of the optimum, whose size decreases with $\tau$. On the other hand, low values of $\alpha$ amount to a concave transformation of the importance weights, which is known in the importance sampling field to lead to a higher effective sample size \citep{koblents2013population}.

In order to highlight this compromise between speed and robustness, we use the RMM algorithm to approximate the target described in Eq. \eqref{eq:GaussianTarget} with $\kappa = 10$. We use a constant number of samples per iteration $N=500$, for $d \in \{5, 10, 20, 40 \}$. It is recommended for importance sampling procedures that the sample size grows as $\exp(d)$ to avoid weight degeneracy \cite{bengsston2008}. In our setting, $d$ increases while $N$ remains constant, thus creating approximation errors that increase with $d$. 

For each dimension, we test $\alpha \in \{0.5, 1.0 \}$ and $\tau \in \{ 0.25, 0.5, 1.0\}$. We track the square errors $\| \bar{\mu} - \mu_k \|^2$ and $\| \bar{\Sigma}_{\kappa} - \Sigma_k \|_F^2$, that are averaged over $10^3$ independent runs.

\begin{figure}[htb]
    \centering
    \begin{subfigure}[b]{0.48\textwidth}
        \includegraphics[width = \textwidth]{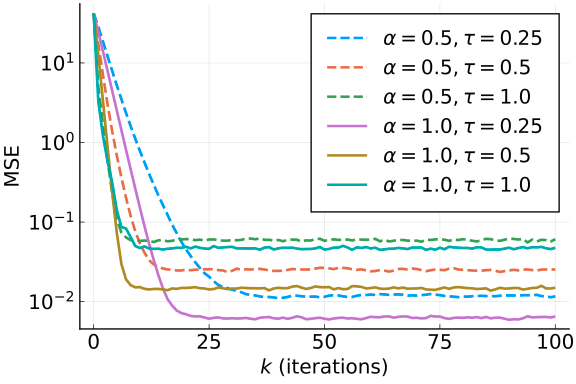}
        \caption{MSE on the mean}
    \end{subfigure}  
    \hfill
    \begin{subfigure}[b]{0.48\textwidth}
        \includegraphics[width = \textwidth]{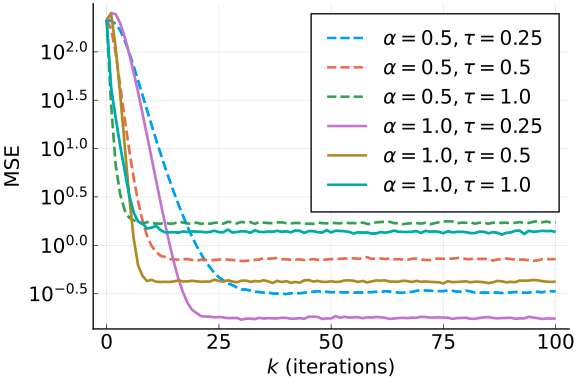}
        \caption{MSE on the covariance}
    \end{subfigure}  
    \caption{MSE averaged over $10^3$ runs, in dimension $d=5$. }
    \label{fig:paramCompare5d}
\end{figure}

In dimension $d=5$, all the choices of parameters lead to convergence, as shown in Fig.~\ref{fig:paramCompare5d}. We can notice that the lowest values of $\tau$ lead to the slowest convergence, but the values reached are lower. On the contrary, when $\tau = 1.0$, the algorithm stops early at higher values.

\begin{figure}[htb]
    \centering
    \begin{subfigure}[b]{0.48\textwidth}
        \includegraphics[width = \textwidth]{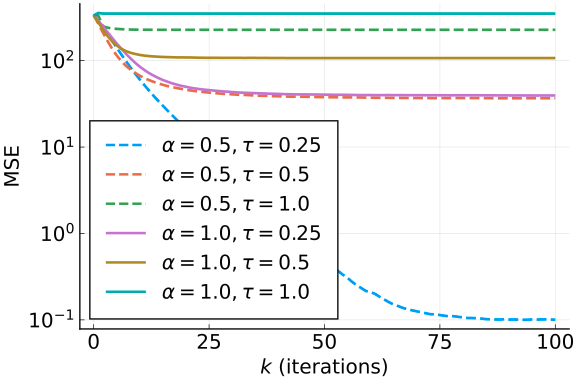}
        \caption{MSE on the mean}
    \end{subfigure}  
    \hfill
    \begin{subfigure}[b]{0.48\textwidth}
        \includegraphics[width = \textwidth]{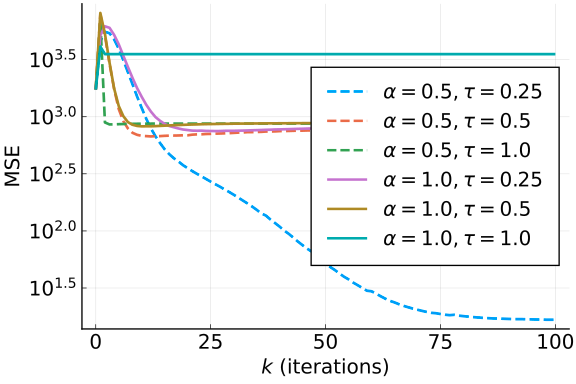}
        \caption{MSE on the covariance}
    \end{subfigure}  
    \caption{MSE averaged over $10^3$ runs, in dimension $d=40$. }
    \label{fig:paramCompare40d}
\end{figure}

Finally, for $d=40$, only the lowest values of $\alpha$ and $\tau$ yield a significant decrease of the MSE as shown in Fig.~\ref{fig:paramCompare40d}. This shows that low values of $\alpha$ and $\tau$ can counteract high approximation errors. As expected, the convergence is slower and the final MSE values are higher than in lower dimensions.

This study shows that the parameters $\alpha$ and $\tau$ should be lowered to compensate for high approximation errors possibly arising in Algorithm~\ref{alg:MCrelaxedMomentMatching}. On the contrary, when these errors are low, one can increase the values of $\tau$ to create faster algorithms.

\subsection{Comparison with the variational Rényi bound on a Gaussian target}
\label{subsection:sensitivityGaussainRMMvsVRB}

Our theoretical analysis provides guidelines to choose the step-size $\tau$ for our RMM algorithm (Propositions \ref{prop:generalConvResult} and \ref{prop:convResultAlpha1}) but also shows that there is no equivalent guarantees for the VRB algorithm (see Proposition \ref{prop:counterexample}). In particular, poorly chosen step-sizes could create unstable behaviors. We thus investigate these effects in the following by comparing our novel RMM algorithm with the VRB algorithm on Gaussian targets.

We use Gaussian target from Eq. \eqref{eq:GaussianTarget}, with $\kappa = 10$, and $d=5$. Each algorithm is run with constant number of samples $N = 500$, and constant values of the step-size $\tau$. We test values of $\alpha$ corresponding to the Hellinger distance ($\alpha = 0.5$) and the KL divergence ($\alpha = 1.0$). We test two different exponential families: Gaussian with full covariance, and Gaussian with diagonal covariance. For each tested value of $\tau$, $10^3$ runs are performed.

\begin{figure}[htb]
    \centering
    \begin{subfigure}[b]{0.48\textwidth}
        \includegraphics[width = \textwidth]{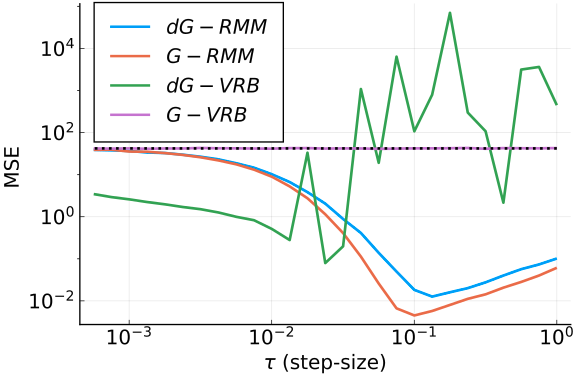}
        \caption{MSE on the mean, $\alpha = 0.5$}
    \end{subfigure}  
    \hfill
    \begin{subfigure}[b]{0.48\textwidth}
        \includegraphics[width = \textwidth]{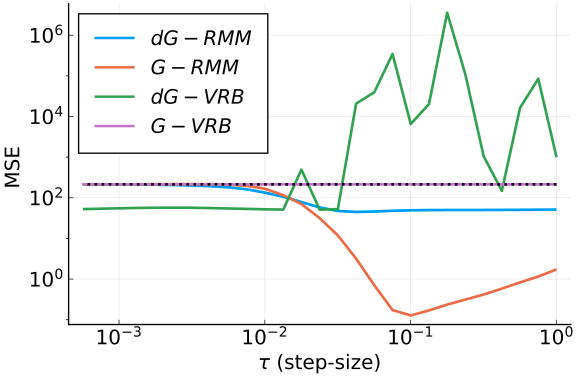}
        \caption{MSE on the covariance, $\alpha = 0.5$}
    \end{subfigure}  
    \hfill
        \begin{subfigure}[b]{0.48\textwidth}
        \includegraphics[width = \textwidth]{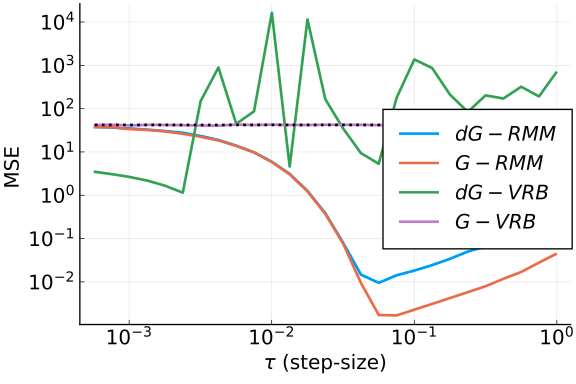}
        \caption{MSE on the mean, $\alpha = 1.0$}
    \end{subfigure}  
    \hfill
    \begin{subfigure}[b]{0.48\textwidth}
        \includegraphics[width = \textwidth]{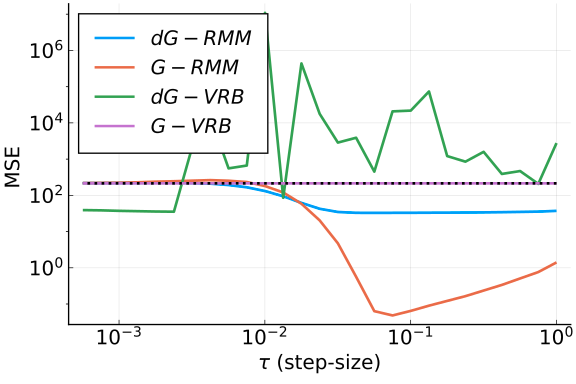}
        \caption{MSE on the covariance, $\alpha = 1.0$}
    \end{subfigure}  
    \caption{MSE in the estimation of $\bar{\mu}$ and $\bar{\Sigma}_{\kappa}$ $(d=5)$ after $100$ iterations, against values of $\tau$. For each value of $\tau$, $10^3$ runs with $500$ samples per iteration are conducted. The dotted black lines represent the MSE at initialization. The prefix dG refer to the family of diagonal Gaussians, while the prefix G refers to Gaussians with full covariance.}
    \label{fig:sensitivityCompare5d}
\end{figure}

Figure \ref{fig:sensitivityCompare5d} shows that the VRB algorithm used with diagonal covariance in the approximation family exhibits two distinct regimes. For sufficiently low values of $\tau$, it is able to improve the estimates compared to initialization, but once $\tau$ crosses a certain threshold, the MSE reaches very high values, showing a degradation from the initialization. The VRB algorithm with full covariance in the approximation family is not able to create covariance matrices that are positive definite, hence it stops after initialization. On the contrary, our RMM algorithm does not degrade the values reached at initialization even for the worst settings of $\tau$, and reaches the lowest MSE values for properly chosen step-sizes. 

This confirms that the lack of Euclidean smoothness of $f_{\pi}^{(\alpha)}$ translates numerically into a high level of instability of VRB with respect to the choice of the step-size. On the contrary, the RMM algorithm has a more stable behavior even for poorly chosen step-sizes, confirming the theoretical study of Section \ref{sec:exactConvergence}.

\bibliographystyle{plainnat}
\bibliography{sample}

\end{document}